\let\ul\underline
\newtheorem{theorem}{Theorem}[section]
\newtheorem{lemma}[theorem]{Lemma}
\newtheorem{proposition}[theorem]{Proposition}
\newtheorem{corollary}[theorem]{Corollary}
\theoremstyle{definition}
\newtheorem{definition}[theorem]{Definition}
\newtheorem{claim}[theorem]{Claim}
\def\multiset#1#2{\ensuremath{\left(\kern-.3em\left(\genfrac{}{}{0pt}{}{#1}{#2}\right)\kern-.3em\right)}}
\newcommand{\F}{\mathbf{F}}
\newcommand{\Q}{\mathbf{Q}}
\newcommand{\C}{\mathbf{C}}
\newcommand{\A}{\mathbf{A}}
\renewcommand{\P}{\mathbf{P}}
\renewcommand{\sp}{\textnormal{span}}
\newcommand{\codim}{\textnormal{codim}}
\newcommand{\rk}{\textnormal{rk}}
\newcommand{\brk}{\textnormal{Brk}}
\let\ul\underline
\author{Amichai Lampert}
\address{Department of Mathematics, University of Michigan, Ann Arbor, MI}
\email{\href{mailto:amichai@umich.edu}{amichai@umich.edu}}
\thanks{The author was supported by NSF grant DMS-2402041}
\title{Density of solutions for systems of forms}
\date{}
\begin{document}

\begin{abstract}
    Let $K$ be a field of characteristic zero over which every diagonal form in sufficiently many variables admits a nontrivial solution. For example, $K$ may be a totally imaginary number field or a finite extension of a $p$-adic field. Suppose $f_1,\ldots,f_s$ are forms of degree $d$ over $K.$ Bik, Draisma and Snowden recently proved that there exists a constant $B = B(d,s,K)$ such that the rational solutions to the system of equations $f_1=\ldots=f_s = 0$ are Zariski dense, as long as the Birch rank of $f_1,\ldots,f_s$ is greater than $B.$ We establish an effective bound for this constant, improving vastly on the astronomical bound coming from their proof. Our result has applications for surjectivity of polynomial maps and for the Hardy-Littlewood circle method.
\end{abstract}

\maketitle

\section{Introduction}

We are interested in studying solutions to systems of forms (i.e. homogeneous polynomials) over a field $K.$ An especially simple class of forms is the diagonal ones. 

\begin{definition}
    The quantity  $\phi_d = \phi_d(K)$ is defined to be the minimal integer such that the following holds: Whenever $n > \phi_d$ and $a\in K^n$ there is some $0\neq x\in K^n$ satisfying $a_1x_1^d+\ldots+a_nx_n^d = 0.$ If there is no such threshold, then $\phi_d = \infty.$ 
\end{definition}

For example, $\phi_1 =1$ over any field and $\phi_2(\Q) = \infty,$ since $x_1^2+\ldots+x_n^2$ admits no rational solutions (besides $x = 0$). From now on we assume that $\phi_d$ is finite for all $d.$ Fields with this property are called \emph{Brauer fields} (see subsection \ref{Brauer-examples} for examples). Consider a collection of forms (i.e. homogeneous polynomials) $ f_1,\ldots,f_s \in K[x_1,\ldots,x_n]$ of common degree $d.$ 

\subsection{Existence of solutions} In a landmark 1945 paper \cite{Brauer}, Brauer proved:

\begin{theorem}[Brauer]\label{thm:Brauer}
     There exists a constant $V_{d,s} = V(d,s,\phi_2,\ldots,\phi_d)$ such that if $n > V_{d,s}$ then there is some $0\neq x\in K^n$ with $f_1(x)=\ldots=f_s(x)=0.$
\end{theorem}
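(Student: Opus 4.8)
The plan is to prove Theorem~\ref{thm:Brauer} by induction on the degree $d$: the key is to reduce an arbitrary system of degree-$d$ forms to a \emph{diagonal} system of degree-$d$ forms — which is governed directly by $\phi_d$ — at the cost of restricting to a carefully chosen linear subspace of the variable space. It is convenient to prove throughout the mildly more general statement in which the forms are allowed to have distinct degrees $e_1,\dots,e_r\le d$, since this costs nothing and is needed to make the induction close; the case of a common degree $d$ is then a special case.

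\emph{Diagonal systems.} Since $\phi_1=1$ and $\phi_2,\dots,\phi_d$ are finite, one first shows that for every $r$ and all $e_1,\dots,e_r\le d$ there is a finite $\Phi(e_1,\dots,e_r;\phi_2,\dots,\phi_d)$ such that any system of diagonal forms $\sum_j a_{ij}x_j^{e_i}=0$ for $1\le i\le r$ in more than $\Phi$ variables has a nontrivial zero. This goes by induction on $r$: partition the variables into $k$ blocks each of size $>\phi_{e_r}$, solve the last equation separately inside each block (possible since $\phi_{e_r}<\infty$), and restrict to the span of the $k$ resulting block-solutions, which have disjoint supports. On this span the last equation holds identically while the remaining equations become diagonal forms of the unchanged degrees $e_1,\dots,e_{r-1}$ in the $k$ block-parameters; now invoke the case of $r-1$ equations with $k>\Phi(e_1,\dots,e_{r-1};\phi_\bullet)$.

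\emph{Reduction to a diagonal system.} The heart of the argument, proved by induction on $d$, is: for every $k$ there is a finite $N=N(d,r,e_1,\dots,e_r;\phi_2,\dots,\phi_d)$ so that any system $f_1,\dots,f_r$ of forms of degrees $e_1,\dots,e_r\le d$ in more than $N$ variables admits linearly independent $v_1,\dots,v_k$ for which every restriction $f_i(t_1v_1+\dots+t_kv_k)$ is a diagonal form $\sum_j c_{ij}t_j^{e_i}$ in $t$. One builds $v_1,\dots,v_k$ greedily. If $v_1,\dots,v_j$ already have this property on $W_j=\langle v_1,\dots,v_j\rangle$, expand $f_i(u+r\,v_{j+1})$ for $u\in W_j$ in powers of the scalar $r$, using the symmetric multilinear polarisation $\Psi_i$ of $f_i$: the $r^0$ and $r^{e_i}$ terms are automatically diagonal, and $v_{j+1}$ extends the good configuration exactly when each intermediate coefficient — a quantity of the shape $\Psi_i(u,\dots,u,v_{j+1},\dots,v_{j+1})$ with $a$ copies of $v_{j+1}$ and $1\le a\le e_i-1$ — vanishes for all $u\in W_j$ and all $i$. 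Writing $u$ in the basis $v_1,\dots,v_j$ turns this into a system of forms in the single unknown vector $v_{j+1}$ whose degrees lie in $\{1,\dots,d-1\}$ and whose number is $O_d(r\,j^{d-1})$. Demanding that $v_{j+1}$ lie in a fixed linear complement of $W_j$ (so independence is automatic), the inductive hypothesis — the mixed-degree Brauer statement in degrees $<d$ — produces such a $v_{j+1}\neq 0$ as soon as that complement has dimension above the relevant finite bound; only $k$ steps are needed, so a finite $N$ works.

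\emph{Conclusion and main difficulty.} Given $f_1,\dots,f_s$ of common degree $d$ in $n$ variables, put $k=\Phi(d,\dots,d;\phi_\bullet)+1$ and apply the previous step to obtain linearly independent $v_1,\dots,v_k$ on whose span all $f_i$ are diagonal of degree $d$; the resulting diagonal system in the $k>\Phi(d,\dots,d;\phi_\bullet)$ variables $t_1,\dots,t_k$ has a nontrivial zero $t^\ast$, and then $x^\ast=\sum_j t^\ast_j v_j\neq 0$ is a nontrivial common zero of the $f_i$. This yields $V_{d,s}=N(d,s,d,\dots,d;\phi_2,\dots,\phi_d)$, a tower-type function of $d,s,\phi_2,\dots,\phi_d$. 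I expect the reduction step to be the real obstacle: one must verify that the ``extension conditions'' are exactly the displayed polynomial equations in $v_{j+1}$, and — crucially for effectivity — that their number is bounded polynomially in $j$ and independently of $n$, so that the recursion on $d$ closes with an explicit bound; keeping track of the several degrees that appear, which is the reason for carrying the mixed-degree formulation, is the most delicate bookkeeping.
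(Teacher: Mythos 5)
The paper does not reproduce a proof of Theorem~\ref{thm:Brauer}: it cites \cite{Brauer} and describes the argument only as ``an elegant diagonalization argument.'' Your proposal is correct and is precisely that classical argument — first reduce the problem for diagonal systems of mixed degree by induction on the number of equations using disjoint supports, then reduce an arbitrary system to a diagonal one on a $k$-dimensional subspace built greedily, the extension conditions at each step being forms of degree at most $d-1$ so that induction on $d$ closes, yielding the tower-type bound. One small caution worth noting: in small positive characteristic the symmetric multilinear polarization $\Psi_i$ need not recover $f_i$, so the extension conditions should be phrased via the multi-homogeneous Taylor coefficients $f_i^e$ (which always exist and give forms of degree $< e_i$ in $v_{j+1}$) rather than through $\Psi_i$; this does not change the structure of the argument.
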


The proof of theorem \ref{thm:Brauer} proceeded via an elegant diagonalization argument, but resulted in an extremely large value for $V_{d,s}.$ It was long thought impossible to obtain a reasonable bound by similar methods. Almost forty years later, Leep and Schmidt \cite{LS} succeeded in that task, via carefully setting up an efficient inductive scheme. Their bound was further improved by Wooley \cite{Wooley}.

\begin{theorem}[Wooley]\label{thm:Wooley}
    Theorem \ref{thm:Brauer} holds with 
    \[
    V_{d,s} = 2s^{2^{d-1}} \phi_d^{2^{d-2}} \prod_{i=2}^{d-1} (\phi_i+1)^{2^{i-2}}.
    \]
\end{theorem}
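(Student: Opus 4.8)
The plan is to prove the bound by induction on the degree $d$, with the inductive step given by a \emph{degree-reduction lemma} that trades one degree for an increase in the number of forms, and with the quadratic case $d=2$ treated directly as the base. Write $g_d(s)=g_d(s;K)$ for the least $n$ such that any $s$ forms of degree $d$ in $n$ variables over $K$ have a nontrivial common zero; Theorem~\ref{thm:Brauer} says $g_d(s)<\infty$, and the target is
\[
g_d(s)\ \le\ 2\,s^{2^{d-1}}\phi_d^{2^{d-2}}\prod_{i=2}^{d-1}(\phi_i+1)^{2^{i-2}}.
\]
The two ingredients I would isolate are: (a) a direct estimate for systems of quadratic forms, $g_2(s)=O\!\left(s^2\phi_2\right)$, which is essentially a theorem of Leep (any quadratic form in more than $\phi_2$ variables is isotropic, and one peels off hyperbolic planes, arranged so the dependence on $s$ comes out polynomial rather than exponential); and (b) the degree-reduction lemma, for $d\ge 3$,
\[
g_d(s)\ \le\ g_{d-1}\!\left(c\,s^2\phi_d\right)
\]
for an absolute constant $c$. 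Granting these, unwinding the lemma from $d$ down to $2$ and inserting the quadratic bound gives the theorem: each application of (b) squares the number of forms (so the exponent of $s$ doubles $d-1$ times, ending at $2^{d-1}$) and plants a factor $\phi_d$ inside the argument, where it is then raised to the $2^{d-2}$ by the remaining squarings (and each $\phi_i$ with $i<d$ similarly to $2^{i-2}$), while the multiplicative constants collapse into the single leading $2$ supplied by the base case.

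For the degree-reduction lemma I would combine two classical manoeuvres. To \emph{drop the degree}, write a prospective solution as $x=tv+u$ with $v$ fixed and $(t,u)$ free; then $f_i(tv+u)=\sum_{k=0}^{d}\binom{d}{k}t^{k}\Phi_i^{(k)}(v;u)$ where $\Phi_i^{(k)}(v;\cdot)$ has degree $d-k$ in $u$, so if $v$ is chosen to annihilate the coefficients with $k\ge 2$ then the remaining conditions on $u$ (together with the linear-in-$u$ coefficient of $t^{d-1}$, balanced against the constant-in-$u$ coefficient $f_i(v)$ of $t^{d}$) are of degree at most $d-1$. To \emph{make such a $v$ available}, I would first carve out a subspace of dimension about $\phi_d$ on which the $f_i$ are simultaneously diagonal --- a Brauer-type diagonalization, but only to this bounded dimension --- and pick $v$ there, so that $f_i(v)$ and the other high-order polarizations evaluated at $v$ become diagonal forms in few variables; these are then eliminated one form at a time using the defining property of $\phi_d$, after comparing the $s$ forms pairwise (which is where the factor $s^2$, rather than $s$, is born). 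Carrying this out, one must keep a careful count so that the degree-$(d-1)$ system produced has only $O(s^2\phi_d)$ forms, in essentially the same number of variables --- this counting, rather than any single clever identity, is the technical core.

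\textbf{The main obstacle is efficiency.} Brauer's original diagonalization argument ran this kind of reduction so wastefully that $V_{d,s}$ emerged as a tower exponential in $d$; the Leep--Schmidt contribution, sharpened by Wooley in the constants, is to make the reduction \emph{balanced}: each pass lowers the degree by exactly one, the number of forms grows only polynomially (by squaring), the ambient dimension is preserved up to a bounded loss rather than fed through a fast-growing function, and every auxiliary lower-degree existence statement is invoked in its ``large common-zero subspace'' form so that dimensions are multiplied only by polynomial factors. Verifying that this bookkeeping really closes with the exponents $2^{i}$ --- and not, say, $3^{i}$ or worse --- and that all the constants telescope down to the single leading factor $2$, is the delicate part of the proof.
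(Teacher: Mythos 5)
The paper does not prove this theorem; it is quoted from Wooley~\cite{Wooley} (building on Leep--Schmidt~\cite{LS}) and used as a black box. So I will assess your sketch on its own terms, with the paper's analogous machinery for Theorem~\ref{thm:main-detailed} (Propositions~\ref{prop:diag-to-good}, \ref{prop:diag} and Lemma~\ref{lem:deg-lower}) as a guide to what the bookkeeping actually looks like.

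Your high-level plan --- induct on $d$, treat $d=2$ by Leep's estimate $g_2(s)\lesssim s^2\phi_2$, and supply the inductive step by a ``degree-reduction'' that trades each degree-$d$ form for a controlled family of lower-degree ones --- is indeed the Leep--Schmidt--Wooley strategy, and it matches the structure of the paper's own argument. But two of the concrete steps you propose do not work as written. First, the substitution $x=tv+u$ does \emph{not} lower the degree: in the expansion $f_i(tv+u)=\sum_{k}\binom{d}{k}t^k\Phi_i^{(k)}(v;u)$, the coefficient of $t^0$ is $\Phi_i^{(0)}(v;u)=f_i(u)$, which is still of degree $d$ in $u$; and the coefficients with $k\ge 2$ are genuine polynomials in $u$, so no choice of $v$ makes them vanish identically. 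The actual mechanism (visible here in Proposition~\ref{prop:diag} via the sets $D_m$, and going back to Brauer) is to build, variable by variable, a subspace $W=\sp(v_1,\ldots,v_m)$ on which all the degree-$d$ forms restrict to \emph{diagonal} forms --- i.e.\ the cross-terms $f_i^e$ with mixed multi-degree vanish --- and then exploit $\phi_d$ on $W$. Reaching that position costs a system of lower-degree conditions, which is where the accounting actually lives.

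Second, the ``one big lemma'' $g_d(s)\le g_{d-1}(cs^2\phi_d)$ is not a single reduction step you can prove directly: Leep--Schmidt/Wooley (and the paper, in Lemma~\ref{lem:deg-lower}) peel off \emph{one} degree-$d$ form at a time, passing through systems of mixed degrees $(s_d-1, s_{d-1}+ms_d, s_{d-2}+\cdots,\ldots)$ and iterating $s_d$ times before reaching a pure degree-$(d-1)$ system. The composite bound you want is attainable this way, but the intermediate lower-degree forms (not just the $s^2\phi_d$ forms of degree $d-1$) have to be tracked and shown subdominant. Your account also leaves unresolved the issue you yourself flag: any loose multiplicative constant $c>1$ in the recursion gets raised to $2^{d-1}$ and would swamp the target; the actual argument is arranged so that those constants are absorbed into the $(\phi_i+1)$ factors rather than accumulating, and proving that requires carrying the precise mixed-degree counts, not an $O(\cdot)$. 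In short: right architecture, but the degree-drop as you describe it is incorrect, and the one-step recursion plus the constant-telescoping both need the more delicate mixed-degree bookkeeping that is the real content of~\cite{LS,Wooley}.
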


\subsection{Density of solutions}

Suppose now that the field $K$ is infinite. Bik, Draisma and Snowden recently \cite{BDS} proved a density version of theorem \ref{thm:Brauer} in this setting. To state it requires the following definition.

\begin{definition}
    The \emph{Birch rank} of $f$ is
    \[
    \brk(f) = \min(n,\codim_{\A^n} (x:\nabla f(x)= 0)).
    \]
    The minimum only plays a role when $f$ is linear. More generally, the collective Birch rank of $f_1,\ldots,f_s$ is 
    \[
    \brk(f_1,\ldots,f_s) = \min \{ \brk(a_1f_1+\ldots+a_sf_s): 0\neq a \in \overline{K}^s \}.\footnote{This is not the standard definition, but it differs from it by at most $s-1$.}
    \]
\end{definition}

Their result is as follows.\footnote{Their result is phrased in terms of the strength of $f_1,\ldots,f_s,$ but is equivalent to theorem \ref{thm:BDS} as stated. This is because Birch rank and strength are both bounded in terms of each other, see \cite{BDS-strength}.}

\begin{theorem}(Bik, Draisma and Snowden)\label{thm:BDS}
     There exists a constant $B_{d,s} = B(d,s,\varphi_2,\ldots,\varphi_d)$ such that if $\brk(f_1,\ldots,f_s) > B_{d,s}$ then the $K$-points of $Z(f_1,\ldots,f_s)$ are Zariski dense.
\end{theorem}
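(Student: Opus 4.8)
One natural route to Theorem~\ref{thm:BDS} would be to deduce Zariski density from a \emph{quantitative form of unirationality}, building the parametrization out of Brauer's theorem in the shape of Theorem~\ref{thm:Wooley} together with the diophantine input encoded by $\phi_2,\ldots,\phi_d$. I would begin with the standard reduction: a large Birch rank forces $f_1,\ldots,f_s$ to be a regular sequence and $Z:=Z(f_1,\ldots,f_s)$ to be an irreducible complete intersection of dimension $n-s$ (its singular locus has codimension at least $\brk(f_1,\ldots,f_s)-O_{d,s}(1)$, so Serre's criterion applies once the Birch rank exceeds an explicit small threshold). Zariski density of $Z(K)$ then means precisely that for every hypersurface $\{g=0\}\not\supseteq Z$ there is $x\in K^n$ with $f_1(x)=\cdots=f_s(x)=0$ and $g(x)\ne 0$; and this follows as soon as $Z$ admits a dominant rational map $\A^N\dashrightarrow Z$ defined over $K$, since $\A^N(K)$ is Zariski dense ($K$ being infinite) and lands in a dense subset of $Z(K)$. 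Note that en route Theorem~\ref{thm:Wooley} already supplies a $K$-point of $Z$ once $\brk(f_1,\ldots,f_s)>V_{d,s}$, so the whole difficulty is to upgrade existence to density.

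For that upgrade the plan is to exhibit, inside $Z$, a large family of $K$-rational linear subspaces that sweeps out $Z$. Fixing $\ell$, the condition that vectors $v_0,\ldots,v_\ell\in K^n$ span an $\ell$-plane contained in $Z$ is the vanishing of the $s\binom{\ell+d}{d}$ coefficients of the polynomials $t\mapsto f_i\bigl(t_0v_0+\cdots+t_\ell v_\ell\bigr)$ — an auxiliary system of forms of degree $d$ in the $(\ell+1)n$ matrix entries. The point is that this auxiliary system should inherit a large Birch rank from $\brk(f_1,\ldots,f_s)$: its top layer contains the $f_i(v_0)$, and its mixed layers are higher polars of the $f_i$, whose ranks are controlled by $\brk(f_1,\ldots,f_s)$. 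Hence Theorem~\ref{thm:Wooley} produces $K$-solutions, and by solving block-by-block over disjoint sets of columns one forces full matrix rank, so genuine $\ell$-planes over $K$ appear, with $\ell$ bounded below by an explicit function of $n$, $d$, $s$ and $\phi_d$; the same argument at a general point of $Z$ gives a positive-dimensional, $K$-rationally parametrized family of such planes through that point.

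To convert ``$Z$ contains a large $K$-rational linear family'' into ``$Z$ is unirational over $K$'', I would run a quantitative version of the classical degree-descent argument (Predonzan, Morin): intersecting $Z$ with a generic linear subspace of the right dimension through such a $K$-rational $\Lambda\subseteq Z$ splits off $\Lambda$ and leaves, fibrewise over that choice, a complete intersection of degree $d-1$ which again contains a large $K$-rational linear space (a subspace of $\Lambda$), and one iterates on the degree, carrying the parameters along to obtain a rational parametrization rather than a single solution. Since each linear space needed in the descent is inherited from the previous step, no new diophantine input is required there; the dependence on $\phi_2,\ldots,\phi_d$ enters only through the invocations of Theorem~\ref{thm:Wooley} above. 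Combined with the reduction of the first paragraph this proves density, and tracking the number of variables consumed at each stage — each step governed by the explicit bound of Theorem~\ref{thm:Wooley} — then produces the effective constant $B=B(d,s,\phi_2,\ldots,\phi_d)$.

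The main obstacle is precisely the quantitative control in the middle step: making explicit how the Birch rank of the ``linear spaces in $Z$'' auxiliary systems (and of the systems arising in the degree descent) depends on $\brk(f_1,\ldots,f_s)$, since the Bik--Draisma--Snowden arguments that high strength forces the expected generic behaviour of these incidence varieties are not effective, and since one must simultaneously rule out rank-deficient solutions when extracting honest linear subspaces. Everything surrounding this — the reduction to unirationality, the Predonzan-type descent, and the bookkeeping on top of Theorem~\ref{thm:Wooley} — should be either classical or routine once the rank estimates are in hand.
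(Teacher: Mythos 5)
Your overall framing (high Birch rank gives irreducibility and completeness of $Z$; density would follow from a dominant $K$-rational map $\A^N\dashrightarrow Z$) is sound, and your observation that Taylor coefficients of the $f_i$ on a linear subspace should inherit high Birch rank is in fact one of the paper's key technical lemmas (Lemma~\ref{lem:brk-taylor}: $\brk(T_m(\ul f))\ge\brk(\ul f)$, valid once $\operatorname{char} K=0$ or $>d$). So the obstacle you flag at the end — quantitative control of the auxiliary systems — is not where the argument actually breaks. The genuine gap is in the Predonzan--Morin degree descent. If $\Lambda\subset Z(f)$ is a $K$-rational linear space cut out by linear forms $\ell_1,\ldots,\ell_k$, then $f=\sum_j\ell_jg_j$ with $\deg g_j=d-1$; restricting to an $M$ with $\dim M=\dim\Lambda+1$ containing $\Lambda$, the residual is $Z(g_1|_M)$, and $\Lambda\cap Z(g_1|_M)$ is a \emph{degree $d-1$} hypersurface in $\Lambda$, not a linear subspace of it. So the residual does not ``again contain a large $K$-rational linear space (a subspace of $\Lambda$)'' as claimed, and the iteration on degree does not close without the more elaborate ladder-of-subspaces machinery that Morin-type arguments actually use — and making that machinery both work for only-generically-smooth complete intersections (high Birch rank does not give smoothness) and effective in $\brk$ rather than in $n$ is a substantial project that is not routine.

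The paper takes a genuinely different route. Rather than proving unirationality of $Z$, it aims for a much weaker target: find a single $K$-rational subspace $L$ on which all but one of the top-degree forms vanish and the surviving one restricts to the ``good'' shape $\alpha xy^{d-1}+\beta y^d+\gamma z^d+g(t)$ of Lemma~\ref{lem:good-subspace} (which is \cite[Prop.~4.4]{BDS} and, after setting $y=1$, is trivially rational); density of $Z(f)\cap L$ together with a Zariski-openness bookkeeping then transfers up to density in $Z(\ul f)$. The reduction to that shape is done by an efficient Leep--Schmidt/Wooley-style diagonalization that is iterated on the lexicographic order of $(s_d,\ldots,s_1)$ (Propositions~\ref{prop:diag} and~\ref{prop:diag-to-good}), not by descent on the degree of a residual intersection. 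This buys two things over the unirationality route: the density engine is a single explicit hypersurface rather than an induction on degree, and the additivity of Birch rank under this diagonalization (Proposition~\ref{prop:diag}) is exactly what keeps the final bound polynomial rather than tower-exponential. Your sketch, even if completed, would likely give a far worse constant because each Morin step restricts to a low-dimensional $M$, where Birch-rank control is lost entirely.
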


The hypothesis of large Birch rank is common in applications of the Hardy-Littlewood circle method, but the above result is novel in that its proof methods are entirely \emph{algebro-geometric} rather than \emph{analytic}. This is readily apparent from the variety of fields for which the result applies (see subsection \ref{Brauer-examples} for examples). Theorem \ref{thm:BDS} has an interesting consequence regarding surjectivity of certain polynomial maps. Suppose that $f_1,\ldots,f_s\in K[x_1,\ldots,x_n]$ are polynomials of degree $d,$ not necessarily homogeneous. Let $F_1,\ldots,F_s$ be the degree $d$ parts of $f_1,\ldots,f_s.$

\begin{corollary}\label{cor:surj}
    If $\brk(F_1,\ldots,F_s) > \max(B_{d,s}+2, 2s-2)$ then the map $\ul{f}:K^n\to K^s$ is surjective. 
\end{corollary}

As in theorem \ref{thm:Brauer}, the constant $B_{d,s}$ that can be extracted from the proof of theorem \ref{thm:BDS} is astronomical.  Our main result is a reasonable bound for this constant, of a similar quality to that of theorem \ref{thm:Wooley}.  

Assume henceforth that the characteristic of $K$ is either zero or greater than $d.$

\begin{theorem}\label{thm:main}
Theorem \ref{thm:BDS} (and hence corollary \ref{cor:surj}) holds for $B_{d,s} = s^{2^{d-1}}C_d,$ where
\[
C_d = 2^{(2\varphi)^{d-2}} \prod_{k=2}^d (\phi_k+11)^{2^{d-2}(\varphi^{d-k-1}+2^{k-d})+4}
\]
and $\varphi = \frac{1+\sqrt{5}}{2}\approx 1.62$ is the golden ratio.    
\end{theorem}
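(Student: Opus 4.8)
The plan is to prove Theorem~\ref{thm:main} by induction on the degree $d$, reducing the density assertion for a system of $s$ forms of degree $d$ to the same assertion for a system of (many more, but controllably many) forms of degree $<d$, in the spirit of the Leep--Schmidt--Wooley refinement of Theorem~\ref{thm:Brauer}. The conceptual target of the reduction is to show that, under the large Birch-rank hypothesis, $Z:=Z(f_1,\dots,f_s)$ is dominated over $K$ by a rational variety (ultimately an affine space over $K$), so that Zariski density of $Z(K)$ follows once $Z$ has a single $K$-point. That single $K$-point is supplied by Wooley's Theorem~\ref{thm:Wooley}, after checking that the stated $B_{d,s}=s^{2^{d-1}}C_d$ exceeds the Brauer threshold $V_{d,s}$, which it does with room to spare. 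Before the induction I would record the structural consequences of large Birch rank: the $f_i$ form a regular sequence, $Z$ is a geometrically irreducible complete intersection of dimension $n-s$ whose singular locus has large codimension, and consequently it suffices to produce, for each proper closed $W\subsetneq Z$, a $K$-point of $Z$ lying off $W$.

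For the inductive step I would pass to a generic $K$-rational linear (or affine-linear) subspace $L\subseteq\A^n$ of a judiciously chosen codimension. Two facts drive the argument. First, the Birch rank of the restricted system $f_1|_L,\dots,f_s|_L$ drops only in a controlled way under a generic section --- here the relation between Birch rank and strength from \cite{BDS-strength} is convenient --- so the large-rank hypothesis is inherited by $L$. Second, on $L$ one can either locate the desired density directly, or write each $f_i|_L$ as a sum of products of forms of strictly smaller degree and feed the resulting lower-degree system into the inductive hypothesis. The new ingredient, absent from the purely existential arguments, is a density-transfer lemma: if the restricted systems have Zariski-dense $K$-points for a Zariski-dense family of such subspaces $L$, then so does $Z$, because that family of slices covers $Z$. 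Bookkeeping the codimensions, the number of lower-degree forms produced (which is roughly squared at each step, accounting for the factor $s^{2^{d-1}}$), and the rank requirements yields a recursion in $d$; optimizing the free parameter --- the codimension of $L$ --- so as to minimize the final threshold leads to a quadratic balance equation whose relevant root is $2\varphi$, which is the source of the golden ratio and of the term $2^{(2\varphi)^{d-2}}$ in $C_d$. The base case $d=1$ is trivial, a nonempty linear system having Zariski-dense $K$-points, and reassembling the recursion produces the explicit constant.

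The step I expect to be the crux is the propagation of the \emph{density} property, rather than the mere existence of one solution, through the induction: one must ensure both that the auxiliary subspaces $L$ can be varied in a family rich enough to sweep out all of $Z$ (so that density on the slices really does force density on $Z$), and that the decomposition $f_i|_L=\sum_j g_{ij}h_{ij}$ does not confine the solutions to a proper subvariety of $Z(f_1|_L,\dots,f_s|_L)$. Managing this is exactly where the large-rank hypothesis, the complete-intersection and irreducibility input, and an honest dimension count must all be brought to bear together, and it is also what forces the rank thresholds --- hence the exponents appearing in $C_d$ --- to be as large as stated.
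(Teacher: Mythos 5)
Your high-level inductive framework (degree lowering in the Leep--Schmidt--Wooley style, careful bookkeeping of the number of lower-degree forms, recursion in $d$) is the right genre, but the core mechanisms you propose are not the ones in the paper, and several of the steps as you state them have genuine gaps.

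First, the existence-plus-upgrade strategy does not work as stated. You propose to get a single $K$-point from Wooley's Theorem~\ref{thm:Wooley} and then conclude density because ``$Z$ is dominated over $K$ by a rational variety.'' Being \emph{dominated by} a rational variety over $K$ does not give density of $K$-points; what one needs is $K$-unirationality, and establishing that under a rank hypothesis is exactly the hard part of the problem --- so this step is circular. The paper never invokes Theorem~\ref{thm:Wooley}; it imports one concrete rationality statement (\cite[Proposition 4.4]{BDS}, Lemma~\ref{lem:good-subspace}) saying that any form of the shape $\alpha xy^{d-1}+\beta y^d+\gamma z^d+g(\ul{t})$ with $\alpha\gamma\neq 0$ has dense $K$-points, and then constructs, inside the high-rank ambient space, a $K$-subspace $L$ on which the top form becomes of this good shape while the lower forms vanish. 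Density is thus delivered by a rational parametrization of $Z(f)\cap L$, not by slicing $Z$ with a Zariski-dense family of subspaces.

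Second, the reduction to lower degree via ``$f_i|_L=\sum_j g_{ij}h_{ij}$'' is a strength decomposition, and the paper explicitly avoids it: the whole point of using Birch rank (and the Taylor-expansion compatibility of Lemma~\ref{lem:brk-taylor}, together with the additivity in Proposition~\ref{prop:diag}) rather than strength is that strength decompositions introduce constant-factor losses that, iterated over $d$, destroy the polynomial bound. Instead, the paper diagonalizes the top-degree form along $f$-orthogonal directions inside the variety $D_m$, treating the resulting Taylor coefficients $f_{i,j}^e$ on $V^m$ as the new, lower-degree system, and simultaneously tracks a Zariski-open condition $h$ so that density (not mere existence) is propagated. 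Finally, the golden ratio in $C_d$ does \emph{not} come from optimizing a free codimension parameter in a quadratic balance equation; it arises from the Fibonacci-type recursion $n_d=\tfrac{1}{2}(\phi_d+11)n_{d-1}n_{d-2}$ of Lemma~\ref{lem:beta-recursion}, which governs the diagonal-form quantity $\beta_d$ used to build the subspace $L$. In short, your outline gestures at the right overall shape but misses the three load-bearing devices --- good forms via Lemma~\ref{lem:good-subspace}, $f$-orthogonal diagonalization on $D_m$ with an open-condition tracker, and the $\beta_d$ Fibonacci recursion --- and substitutes steps (unirational domination, slice-covering transfer, strength decomposition) that are either circular, unproved, or explicitly ruled out in the paper's own discussion of method.
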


Bounds for $\phi_d$ were obtained by Skinner \cite{Skinner-phi} when $K$ is a finite extension of $\Q_p.$ These bounds yield the following corollary to our theorem. 

\begin{corollary}\label{cor:p-adic} 
If $K$ is a finite extension of $\Q_p$ then theorem \ref{thm:BDS} (and hence corollary \ref{cor:surj}) holds with
\[
B_{d,s} = s^{2^{d-1}} (4d)^{(2\varphi)^{d-1}+2^d+8d}  \cdot 2^{(2\varphi)^{d-2}}. 
\]
\end{corollary}

If $\ul{f}$ has coefficients in a number field and $\brk(\ul{f})$ is larger than the above bound, then corollary \ref{cor:p-adic} guarantees the existence of a \emph{smooth} solution to $\ul{f} = 0$ at every finite place. This property is of significant interest for the Hardy-Littlewood circle method, see \cite{Birch,Skinner-HLS}.

\subsection{Examples}\label{Brauer-examples}
    The following are some examples of Brauer fields (see \cite{BDS} for details and more examples):
\begin{itemize}
    \item Totally imaginary number fields \cite{Peck},
    \item $p$-adic fields $\Q_p$,
    \item function fields $\F_q(t_1,\ldots,t_m)$ or $\C(t_1,\dots,t_m)$,
    \item Laurent series $\F_q(\!(t_1,\ldots,t_m)\!)$ or $\C(\!(t_1,\ldots,t_m)\!)$
    \item and any finite extension of the above.
\end{itemize}

\subsection{Structure of the paper}
The next section begins by proving corollaries \ref{cor:surj} and \ref{cor:p-adic}, after which we state our three main technical propositions. We end section 2 by briefly discussing some key ideas in our proof. In section 3, theorem \ref{thm:main} is deduced from the three main propositions. Section 4 collects various geometric results required for the proofs of the main propositions, which are then carried out in sections 5-7.  

\subsection{Acknowledgements} The author thanks Tamar Ziegler for introducing him to Schmidt's result \cite{Schmidt} and related work. Many thanks are due to Andrew Snowden for useful discussions about the proof of theorem \ref{thm:BDS} and to Trevor Wooley for graciously sharing his knowledge of the state of the art. Last but certainly not least, I thank Noa for her unflagging encouragement and support throughout this project.

\section{Proof of corollaries and statement of propositions}

\subsection{Proof of corollaries \ref{cor:surj} and  \ref{cor:p-adic}}

We may assume that $d\ge 2$ since the statements are trivial for linear forms. We begin with some simple properties of Birch rank. Let $h_1,\ldots,h_s$ be forms of degree $d.$

\begin{claim}\label{brk-basic}\mbox{}
\begin{enumerate}
    \item Subadditivity:
    $\brk(h_1+h_2) \le \brk(h_1)+\brk(h_2).$
    \item If $\brk(h_1,\ldots,h_s) \ge 2s-1$ then $Z(h_1,\ldots,h_s)$ is a complete intersection of codimension $s.$
\end{enumerate}
    
\end{claim}

\begin{proof}
    Subadditivity follows from the containment 
    \[
    (x:\nabla h_1(x) = 0) \cap (x:\nabla h_2(x) = 0) \subset (x:\nabla (h_1+h_2)(x) = 0)
    \]
    and the fact that codimension is subadditive on intersections. 

    For the second property, suppose, to get a contradiction, that $Z(g_1,\ldots,g_s)$ has a component of codimension $<s.$ Any smooth point in that component must be contained in the locus $S = (x: \nabla g_1(x),\ldots,\nabla g_s(x) \textnormal{ are linearly dependent}).$ On the other hand, this locus may be written as $S = \bigcup_{a\in \P^{s-1}} S_a,$ with $ S_a =  (x: \nabla (a_1g_1+\ldots+a_sg_s)(x) = 0).$ By assumption, each $S_a$ satisfies $\codim S_a \ge 2s-1$ and so by a union bound $\codim S \ge s,$ contradiction.
\end{proof}

\begin{proof}[Proof of corollary \ref{cor:surj}]
Since the hypothesis is invariant under substitutions of the type $f_i\mapsto f_i-c_i,$ it's enough to prove that $Z(f_1,\ldots,f_s)$ contains a $K$-point. Let $g_i = y^d f_i(x_1/y,\ldots,x_n/y)$ be the homogenized forms corresponding to the $f_i.$ We claim that 
\begin{equation}\label{eq:brk-homog}
    \brk(g_1,\ldots,g_s) \ge \brk(F_1,\ldots,F_s) -2 > B_{d,s}
\end{equation}

Assuming this inequality, we complete the proof. On the subspace $y=0,$ the $g_i$ restrict to $F_i$ and these define a complete intersection of codimension $s$ by claim \ref{brk-basic}. Therefore $y,g_1,\ldots,g_s$ cut out a complete intersection of codimension $s+1$ so in particular $y$ does not vanish identically on $Z = Z(g_1,\ldots,g_s).$ By inequality \eqref{eq:brk-homog}, theorem \ref{thm:BDS} implies that $Z$ contains a $K$-point $(x_1,\ldots,x_n,y)$ with $y\neq 0.$ The $K$-point $(x_1/y,\ldots,x_n/y,1)$ is therefore contained in $Z(f_1,\ldots,f_s)$ as desired.    

To prove inequality \eqref{eq:brk-homog}, note that for any non-trivial linear combination we have 
\[
a_1g_1+\ldots+a_sg_s = a_1F_1+\ldots+a_sF_s + yh,
\]
where $h$ is some form of degree $d-1.$ The reducible form $yh$ is easily seen to have $\brk(yh) \le 2,$ by Krull's height theorem. It follows from claim \ref{brk-basic} that 
\[
\brk(a_1F_1+\ldots+a_sF_s) \le \brk(a_1g_1+\ldots+a_sg_s) +2.
\]
\end{proof}

Next we prove corollary \ref{cor:p-adic}.

\begin{proof}[Proof of corollary \ref{cor:p-adic}]
    Skinner proved \cite{Skinner-phi}  $\phi_k \le 8k^2.$ This implies 
    \begin{align*}
         \prod_{k=2}^d (\phi_k+11)^{2^{d-2}(2^{k-d}+\varphi^{d-k-1})+4} &\le           (4d)^{2^{d-1} \sum_{k=2}^d (2^{k-d}+\varphi^{d-k-1}) +8d} \\
         &\le (4d)^{(2\varphi)^{d-1} + 2^d +8d}. 
    \end{align*}
    The result follows by plugging this into the bound of theorem \ref{thm:main}.
\end{proof}

\subsection{Three propositions}
The inductive proof will proceed via a more general version of theorem \ref{thm:main}. Consider a system of forms $\ul{f} = (f_{i,j})_{i\in [d],j\in [s_i]},$ where each $f_{i,j}$ has degree $i.$ The definition of Birch rank generalizes as follows.

\begin{definition}
    The Birch rank of $\ul{f}$ is 
    \[
    \brk(\ul{f}) = \min_{i\in [d]} \brk(f_{i,1},\ldots,f_{i,s_i}).
    \]
\end{definition}

Theorem \ref{thm:main} is a special case of the following result.

\begin{theorem}\label{thm:main-detailed}
    Let $s = \max_{i\in [d]} s_i^{2^{i-d}}.$ If 
    \[
    \brk(\ul{f}) > s^{2^{d-1}} 2^{(2\varphi)^{d-2}} \prod_{k=2}^d (\phi_k+11)^{2^{d-2}(\varphi^{d-k-1}+2^{k-d})+4}
    \]
    then the $K$-points of $Z(\ul{f})$ are Zariski dense.
\end{theorem}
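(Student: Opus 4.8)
The plan is to prove Theorem \ref{thm:main-detailed} by an inductive scheme on the degree $d$, mirroring the Leep--Schmidt/Wooley strategy for Brauer's theorem (Theorems \ref{thm:Brauer} and \ref{thm:Wooley}) but carrying along a \emph{density} statement rather than a mere existence statement. The key conceptual point is that density of $K$-points in $Z(\ul f)$ should follow from being able to find, for any proper subvariety $W \subsetneq Z(\ul f)$, a $K$-point of $Z(\ul f)$ outside $W$; and one produces such points by intersecting $Z(\ul f)$ with suitable generic linear subspaces and then invoking an \emph{existence} statement in fewer variables --- but with quantitative control on how the Birch rank drops under such a restriction. So the three main propositions advertised at the end of the excerpt are presumably: (i) a proposition saying that if the Birch rank is large then a generic linear section retains large Birch rank (so one can iterate); (ii) a proposition controlling the geometry/dimension of $Z(\ul f)$ and guaranteeing that a generic section is still of the ``expected'' dimension and not trapped in $W$; and (iii) a quantitative existence/regularity proposition --- essentially a careful, effective version of the diagonalization argument --- providing $K$-points on high-rank systems in few enough variables, with the explicit bounds featuring $\phi_k + 11$, the golden ratio $\varphi$, and the exponents $2^{d-2}(\varphi^{d-k-1} + 2^{k-d})$.

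The steps, in order, would be as follows. First, reduce Theorem \ref{thm:main} to Theorem \ref{thm:main-detailed}: a single-degree system $f_1,\dots,f_s$ of degree $d$ is the case $s_d = s$, $s_i = 0$ for $i<d$, and then $\max_i s_i^{2^{i-d}} = s$, so the bound collapses to $s^{2^{d-1}} C_d$ as stated. Second, set up the induction on $d$: the base case $d=1$ is linear algebra (a linear system of Birch rank $> r$ cuts out a linear subspace of codimension bounded by the number of independent forms, which has many $K$-points and is Zariski-irreducible, so density is automatic once the rank exceeds the relevant threshold). Third, for the inductive step, given $\ul f$ of top degree $d$ with $\brk(\ul f)$ above the stated threshold, and given a proper closed $W \subsetneq Z(\ul f)$, intersect with a generic affine-linear subspace $L$ of codimension equal to the number of degree-$d$ forms (roughly); on $L$ the top-degree forms can be diagonalized or eliminated at the cost of introducing new lower-degree forms, reducing to a system of degree $\le d-1$ in many variables. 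Fourth, track two quantities through this reduction: the number of variables remaining (which must stay above the degree-$(d-1)$ threshold $s'^{\,2^{d-2}} C_{d-1}$, forcing the $C_d$ recursion $C_d \approx (\phi_d + O(1))^{\cdots} C_{d-1}^{\cdots}$ that, when unwound, produces the product over $k$ with the $\varphi$-Fibonacci-type exponents), and the Birch rank of the new system (which must remain above the degree-$(d-1)$ threshold --- this is where Proposition (i) on rank under generic linear sections is used). Fifth, conclude that the reduced system has a $K$-point, lift it back to a $K$-point of $Z(\ul f) \cap L$, and use genericity of $L$ (Proposition (ii)) to ensure this point avoids $W$; since $W$ was arbitrary, $K$-points are Zariski dense.

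The main obstacle, I expect, is the quantitative bookkeeping in the inductive step --- specifically, balancing the two competing constraints (enough variables survive versus enough Birch rank survives) so that the recursion for $C_d$ closes with exponents no worse than $2^{d-2}(\varphi^{d-k-1} + 2^{k-d}) + 4$. The appearance of the golden ratio is the tell-tale sign: it arises because the optimal split of ``how many variables to spend on lowering the degree'' versus ``how much rank to sacrifice'' leads to a two-term linear recurrence of Fibonacci type, and one must choose the parameters in the linear section to hit its dominant root $\varphi$. A secondary difficulty is making the ``generic linear section preserves Birch rank'' statement both true and effective: Birch rank is defined via the codimension of the singular locus of an arbitrary $\overline K$-linear combination $a_1 f_1 + \dots + a_s f_s$, so one needs a uniform (in $a$) statement that restricting to a generic $L$ drops this codimension by at most $\codim L$, which is a dimension-of-fibers / generic-smoothness argument that must be run carefully in positive characteristic $> d$ as well (hence the hypothesis on $\ch K$). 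Finally, one must ensure the diagonalization step over $K$ genuinely uses only the finiteness of $\phi_2,\dots,\phi_d$ and does so with the additive constant $11$ that appears in the theorem --- presumably a small slack absorbing the passage between Birch rank and the ``h-invariant''/strength-type quantities that the diagonalization argument naturally controls.
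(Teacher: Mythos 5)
The proposal gets the broad orientation right --- an effective, Leep--Schmidt/Wooley-style diagonalization induction carrying a density statement rather than mere existence --- but it misses the actual mechanism the paper uses, and the mechanism it proposes in its place would not close.

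The central object in the paper is the variety $D_m$ of $f$-orthogonal tuples $(w_1,w_2,w_3,v_1,\ldots,v_{m-3})$ lying in $Z_m(\ul g)$, together with the quantity $\delta_m$ controlling density of $K$-points on $D_m$. The induction is lexicographic on $(s_d,\ldots,s_1)$: Proposition~\ref{prop:diag} lowers $s_d$ by one (at the cost of inflating the lower $s_i$) by inductively choosing a $K$-point of $D_{m-1}$ and restricting to its fiber, while Proposition~\ref{prop:diag-to-good} converts $K$-density of $D_m$ into $K$-density of $Z(\ul f)$ by building, from an $f$-orthogonal tuple, a small subspace $L$ on which $\ul g$ vanishes and $f\restriction_L$ has the explicit shape $\alpha x y^{d-1}+\beta y^d +\gamma z^d + g(t)$ of Lemma~\ref{lem:good-subspace}. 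You do not mention $f$-orthogonality, $D_m$, the ``good form'' criterion, or the nondegenerate diagonal system invariant $\beta_d$; these are not incidental details but the engine of the argument.

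By contrast you propose ``intersecting with a generic affine-linear subspace $L$ of codimension equal to the number of degree-$d$ forms'' on which the degree-$d$ forms are ``diagonalized or eliminated.'' A generic linear section neither diagonalizes nor eliminates forms of degree $d\ge 2$; what allows diagonalization here is precisely the high Birch rank, which is used (via Lemma~\ref{lem:diag-on-subspace-geo} and its analogue) to produce large $f$-orthogonal collections of vectors inside $Z(\ul g)$, not generic hyperplanes. Relatedly, your guessed proposition that Birch rank drops by at most $\codim L$ under restriction is quantitatively off: the correct statement (Lemma~\ref{lem:brk-subspace}, via Ananyan--Hochster) gives a drop of $2\,\codim L$, and this factor of $2$ is exactly why the paper insists on Birch rank rather than strength as the bookkeeping quantity --- its additivity (Proposition~\ref{prop:diag}) is what keeps the final bound polynomial in $s$. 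Finally, your explanation of the golden ratio as ``an optimal split between variables spent on degree-lowering and rank sacrificed'' does not match the source: $\varphi$ enters through the two-step recursion $n_d = \tfrac{1}{2}(\phi_d+11)\,n_{d-1}n_{d-2}$ for the diagonal-system threshold $\beta_d$, which reflects a nested construction ($n_{d-1}$-variable blocks inside an $n_{d-2}$-sized block structure) of diagonal solutions to the off-diagonal Taylor terms, not a rank/variable tradeoff. So while the outline is in the right spirit, it has genuine gaps: it lacks the $f$-orthogonality/diagonalization mechanism and misattributes both the $\varphi$-recursion and the role of the constant $2$ in the rank-restriction lemma.
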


The proof of theorem \ref{thm:main-detailed} relies on three main propositions. Before we can state them, we need to set up some definitions.

\begin{definition} 
Define $B = B(s_d,\ldots,s_1)$ to be the minimal integer such that for any collection of forms $\ul{f} = (f_{i,j})_{i\in [d],j\in [s_i]}$ with $\brk(\ul{f}) > B,$ the $K$-points of $Z(\ul{f})$ are Zariski dense.  We sometimes write $B_d(s_d,\ldots,s_1)$ if the degree is otherwise unclear. For example, $B_1(s) = B(s) = s.$ 
\end{definition}

There is a special case of this which will be particularly important for us. A \emph{non-degenerate system of diagonal forms} $\ul{f} = (f_1,\ldots,f_{d+1})$ is a collection where:
\begin{itemize}
    \item Each $f_i\in K[x_1,\ldots,x_n]$ is a diagonal form with nonzero coefficients,
    \item The degree of $f_i$ equals $i$ for $1\le i\le d$ and $\deg(f_{d+1}) > d.$ 
\end{itemize}

\begin{definition}
    Define $\beta_d$ to be the minimal integer such that the following holds. Whenever $n > \beta_d$ and $\ul{f} = (f_1,\ldots,f_{d+1})$ is a non-degenerate system of diagonal forms then there exists $x\in K^n$ with $f_1(x) = \ldots = f_d(x) = 0$ and $f_{d+1}(x)\neq 0.$
\end{definition}

Our first proposition is a bound for $\beta_d,$ which may be of independent interest.

\begin{proposition}[Bound for non-degenerate diagonal systems]\label{prop:beta-bound}
    There exists a sequence of even integers $n_d$ such that for $d\ge 1$ we have
    \[
    \beta_d+1 \le n_d \le  2^{1+\varphi^{d-1}}\prod_{k=2}^{d} (\phi_k+11) ^{\varphi^{d-k}}.
    \]
\end{proposition}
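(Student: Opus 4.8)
## Proof proposal for Proposition~\ref{prop:beta-bound}

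\textbf{Overall strategy.} The plan is to prove the bound by induction on $d$, mirroring the Leep--Schmidt--Wooley inductive scheme for existence of solutions (Theorem~\ref{thm:Wooley}) but tracking the extra condition $f_{d+1}(x)\ne 0$. The base case $d=1$ handles a linear form $f_1$, a higher-degree diagonal form $f_2$, and asks for a nonzero common zero of $f_1$ on which $f_2$ does not vanish; since $f_1$ cuts out a hyperplane and $f_2$ restricted to it is not identically zero once $n$ is large (a diagonal form of degree $>1$ in many variables does not vanish on a coordinate hyperplane), one can pick $n_1$ explicitly, even, with $n_1 \le 2^{1+\varphi^0}(\cdots) = 4$ after checking the empty product convention. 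For the inductive step, given a non-degenerate system $(f_1,\dots,f_{d+1})$ in $n > \beta_d$ variables (with $n = n_d$ chosen even), I would first use finiteness of $\phi_d$ to find a large subset of the variables on which the top-degree genuine form $f_d$ can be made to vanish along a diagonal subspace, reducing $f_d$ and passing to the lower system $(f_1,\dots,f_{d-1},f_{d+1})$ in a controlled number of remaining variables; then invoke the inductive hypothesis $\beta_{d-1} \le n_{d-1}-1$ to solve $f_1=\dots=f_{d-1}=0$, $f_{d+1}\ne 0$ on that subspace. Choosing the evenness of $n_d$ lets the diagonalization arguments split variables into pairs cleanly, which is why the statement asserts the $n_d$ are even.

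\textbf{Key steps in order.}
\begin{enumerate}
    \item Set up the recursion: define $n_d$ by a recurrence of the shape $n_d \approx (\phi_d+O(1)) \cdot g(n_{d-1})$ for an appropriate function $g$ coming from how many variables one must spend to kill $f_d$ while retaining enough to run the $(d-1)$-step argument, and verify the closed-form bound $n_d \le 2^{1+\varphi^{d-1}}\prod_{k=2}^d(\phi_k+11)^{\varphi^{d-k}}$ satisfies this recurrence — this is where the golden ratio $\varphi$ enters, since the exponents of $(\phi_k+11)$ grow like a Fibonacci-type sum and the dominant growth of a linear recurrence with the relevant coefficients is governed by $\varphi$.
    \item Prove the diagonalization lemma for the top form: in $n$ variables, a diagonal form $f_d$ of degree $d$ with nonzero coefficients, together with $f_{d+1}$, can be restricted to a diagonal coordinate subspace of dimension $\ge h(n)$ on which $f_d \equiv 0$, for a suitable $h$ — here one either groups variables and uses $\phi_d$-solubility of the grouped diagonal form, or peels off solutions one at a time, always keeping track that $f_{d+1}$ stays "alive" (its restriction is still a non-degenerate diagonal form of degree $>d$, i.e. still has a nonzero coefficient).
    \item Apply the inductive hypothesis to $(f_1,\dots,f_{d-1},f_{d+1})$ on the resulting subspace, noting this is again a non-degenerate system of diagonal forms (of one lower "length"), provided $h(n_d) \ge n_{d-1}$, which is exactly the inequality the recurrence is designed to guarantee.
    \item Check the evenness and the initial condition, and conclude $\beta_d + 1 \le n_d$.
\end{enumerate}

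\textbf{Main obstacle.} The hard part will be step~1 — choosing the recurrence for $n_d$ tightly enough that its solution matches the claimed closed form with the golden-ratio exponents, rather than something weaker. In the Wooley-type argument, each degree reduction costs a multiplicative factor that is roughly a power of $\phi_d$ times a power of the previous threshold, and one must be careful that the ``number of variables spent'' at stage $d$ is close to linear (not quadratic) in $n_{d-1}$; otherwise the exponents blow up to $2^{d}$ rather than $\varphi^{d}$. Getting the constant $+11$ (rather than $+1$) is the slack that absorbs the book-keeping about keeping $f_{d+1}$ non-vanishing and about rounding to even dimensions. A secondary technical point is ensuring that when we restrict to a diagonal subspace, $f_{d+1}$ does not degenerate — this requires choosing the subspace to avoid the (finitely many, coordinate) vanishing loci of $f_{d+1}$'s monomials, which costs only a bounded number of additional variables and is why the final condition asks for $f_{d+1}(x)\ne 0$ at a single point rather than generic non-vanishing.
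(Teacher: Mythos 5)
Your plan has a real structural gap, and it is exactly at the place you flag as the ``main obstacle.'' You set up a one-step recursion of the shape $n_d \approx (\phi_d + O(1)) \cdot g(n_{d-1})$ and worry only about whether $g$ is linear or quadratic. Neither option matches the claim: a linear $g$ makes the exponents of $(\phi_k+11)$ stay constant, a quadratic one makes them double. The claimed closed form $n_d \le 2^{1+\varphi^{d-1}}\prod_{k=2}^d(\phi_k+11)^{\varphi^{d-k}}$ can only come from a \emph{two-index multiplicative} recursion, and indeed the paper proves
\[
n_d = \frac{(\phi_d+11)\,n_{d-1}\,n_{d-2}}{2},\qquad n_0=2,\ n_1 = 4,
\]
so that $\log n_d$ satisfies a Fibonacci-type recurrence, which is where $\varphi$ genuinely enters. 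Your reduction from $\beta_d$ to $\beta_{d-1}$ alone cannot produce this.

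The reason the paper's recursion lands on both $\beta_{d-1}$ and $\beta_{d-2}$ is the mechanism you left out. You propose to find a diagonal subspace on which $f_d$ vanishes identically, but that is both too strong and too expensive: killing the cross-terms of $f_d$ on a $2$-plane already forces you to treat the Taylor coefficients $g^{(d-e,e)}(x,\cdot)$ for $1\le e\le d-1$, which are diagonal forms of degrees $1,\dots,d-1$, and handling those is precisely an instance of $\beta_{d-2}$ (vanish degrees $1,\dots,d-2$ and keep the degree-$(d-1)$ form alive). Meanwhile the paper never vanishes $f_d$ on the whole subspace. It only makes the restriction $f_d\restriction_L$ \emph{good} in the sense of Lemma~\ref{lem:good-subspace} (the BDS form $\alpha x y^{d-1}+\beta y^d+\gamma z^d+g$), which already guarantees that $Z(f_d\restriction_L)$ has dense $K$-points; density, combined with making $Z(f_d,f_{d+1})\cap L$ a proper closed subset, produces a $K$-point where $f_{d+1}\ne 0$. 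Your proposal omits Lemma~\ref{lem:good-subspace} entirely, and without it (or some replacement) there is no route from a single zero of $f_d$, or even a subspace where $f_d$ vanishes, to a point where additionally $f_{d+1}\ne 0$. So the two missing ingredients are: (i) the two-term multiplicative recursion $n_d\sim n_{d-1}n_{d-2}$, and (ii) the ``good form'' criterion that makes it enough to put $f_d\restriction_L$ into a special diagonal shape rather than annihilate it.
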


To state our next propositions, recall that the Taylor expansion of a form $f$ of degree $d$ is given by
\[
f(x_1+\ldots+x_m) = \sum_{|e| = d} f^e(x_1,\ldots,x_m),
\]
where each $f^e$ is multi-homogeneous of multi-degree $e.$ Write $\multiset{m}{d} = \binom{m+d-1}{d}$ for the number of terms in the above sum. For a collection of forms $\ul{f} = (f_{i,j})_{i\in [d],j\in [s_i]},$ write $T_m(\ul{f}) = (f_{i,j}^e)_{i\in [d],j\in [s_i],|e|=i}$ for the forms on $V^m$ given by the Taylor expansion. These are also the coefficients of the polynomials obtained by restricting $\ul{f}$ to $\sp(x_1,\ldots,x_m).$ We abbreviate $Z_m(\ul{f}) := Z(T_m(\ul{f})).$

Assume $s_d>0$ and write $\ul{f} = \ul{g} \sqcup \{f\},$ where $f = f_{d,s_d}.$ Recall the following useful definition from \cite{BDS}.

\begin{definition}
    Subspaces $U_1,\ldots,U_m$ are $f$-orthogonal if
    \[
    f(u_1+\ldots+u_m) = f(u_1)+\ldots+f(u_m)
    \]
    for any tuple of vectors $u_i\in U_i.$ Equivalently, $f^e(u_1,\ldots,u_m) = 0$ for any exponent $e$ with multiple nonzero entries.  
\end{definition}

We consider the algebraic set 
\[
D_m = ((w_1,w_2,w_3,v_1,\ldots,v_{m-3})\in Z_m(\ul{g}): \sp(w_1,w_2,w_3),v_1,\ldots,v_{m-3} \textnormal{ are } f\textnormal{-orthogonal}).
\]

\begin{definition}
   Let $\delta = \delta_m(s_d,\ldots,s_1)$ be the minimal number such that whenever $\brk(\ul{f}) > \delta$ the $K$-points of $D_m$ are Zariski dense. 
\end{definition}

Our second main proposition bounds $B(s_d,\ldots,s_1)$ in terms of $\delta_m(s_d,\ldots,s_1).$

\begin{proposition}[Diagonalization implies density]\label{prop:diag-to-good}
    For $d\ge 2$ define
    \[
    m = m_d = (\phi_d+9)\left\lceil \frac{\beta_{d-2}+1}{2} \right\rceil.
    \]
    Then 
    $$B(s_d,\ldots,s_1) \le \max(\delta_m(s_d,\ldots,s_1), 2 \sum_{i=1}^d s_i \multiset{m}{i}+d'-1).$$
\end{proposition}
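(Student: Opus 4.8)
Write $\ul f = \ul g\sqcup\{f\}$ with $f = f_{d,s_d}$, set $m = m_d$, and assume $\brk(\ul f)$ exceeds both $\delta_m = \delta_m(s_d,\ldots,s_1)$ and $2\sum_{i=1}^d s_i\multiset{m}{i}+d'-1$. We must show $Z(\ul f)(K)$ is Zariski dense, and for this it suffices to prove that for every form $h$ that does not vanish identically on $Z(\ul f)$ there is a point $x\in Z(\ul f)(K)$ with $h(x)\neq 0$: a subset of a variety is Zariski dense exactly when it meets the complement of every such hypersurface. Fix such an $h$. By the definition of $\delta_m$ and the hypothesis $\brk(\ul f)>\delta_m$, the $K$-points of $D_m$ are Zariski dense in $D_m$. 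Pick a point $y = (w_1,w_2,w_3,v_1,\ldots,v_{m-3})\in D_m(K)$ and set $U_y = \sp(w_1,w_2,w_3)$ and $W_y = \sp(w_1,w_2,w_3,v_1,\ldots,v_{m-3})$. Membership in $D_m$ says that $\ul g$ vanishes identically on $W_y$ and that the blocks $U_y,\sp(v_1),\ldots,\sp(v_{m-3})$ are $f$-orthogonal, so for $u\in U_y$ and scalars $t_j$,
\[
f\Bigl(u + \sum_{j=1}^{m-3} t_j v_j\Bigr) = f|_{U_y}(u) + \sum_{j=1}^{m-3} c_j t_j^{d}, \qquad c_j := f(v_j).
\]
Thus on $W_y$ the whole subsystem $\ul g$ has been killed and $f$ has been put into ``three-variable part plus long diagonal tail'' form.

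\textbf{Step 2: a $K$-rational linear space inside $Z(\ul f)$.} The point of the numerology $m = (\phi_d+9)\lceil(\beta_{d-2}+1)/2\rceil$ is that, after reserving three slots for $U_y$, the tail $v_1,\ldots,v_{m-3}$ can be partitioned into $\lceil(\beta_{d-2}+1)/2\rceil$ mutually $f$-orthogonal blocks, each of dimension at least $\phi_d+9$ once the $U_y$-slots are attached to the first one. Working block by block, I will construct a $K$-rational subspace $\Lambda_y\subseteq W_y$ of dimension at least $1$ on which $f$ vanishes identically; since $\ul g\equiv 0$ on $W_y\supseteq\Lambda_y$, this gives $\Lambda_y\subseteq Z(\ul f)$. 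In each block the restriction of $f$ is a diagonal form of degree $d$ in more than $\phi_d$ variables, hence has a nonzero $K$-zero; to promote such zeros to a genuine linear space one extends the space one dimension at a time, and the condition for a new vector inside an $f$-orthogonal block to preserve the vanishing of $f$ unwinds, via the Taylor expansion and $f$-orthogonality, into a non-degenerate diagonal system of degrees $1,\ldots,d-2$ together with one higher-degree form. This is precisely what $\beta_{d-2}$ governs, so Proposition \ref{prop:beta-bound} together with the finiteness of $\phi_d$ is exactly what makes $\lceil(\beta_{d-2}+1)/2\rceil$ blocks of budget $\phi_d+9$ suffice. (If $f|_{U_y}\equiv 0$ the conclusion is immediate, since then $U_y$ itself lies in $Z(\ul f)$.)

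\textbf{Step 3: making $\Lambda_y$ avoid $Z(h)$.} Carried out in families over a dense open subset of $D_m$, Steps 1--2 yield a constructible family of linear subspaces of $Z(\ul f)$, of some constant dimension $r\ge 1$, indexed by the configuration $y$. It remains to show that for $y$ in a dense open subset of $D_m$ one has $\Lambda_y\not\subseteq Z(h)$; granting this, choosing such a $y$ in the dense set $D_m(K)$ makes $\Lambda_y$ an $r$-dimensional $K$-linear space not contained in the hypersurface $Z(h)$, so $\Lambda_y(K)\setminus Z(h)\neq\emptyset$, and any point of it is the desired $x$. To obtain the escape statement we bring in the hypothesis $\brk(\ul f)>2\sum_i s_i\multiset{m}{i}+d'-1$ and the geometric results of Section 4. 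Here $\sum_i s_i\multiset{m}{i}$ is the number of Taylor coefficients of $\ul f$ at level $m$, i.e.\ the number of equations cutting out $Z_m(\ul f)\subseteq V^m$; when $\brk(\ul f)$ is large these equations, together with the orthogonality equations defining $D_m$, behave generically --- $Z_m(\ul f)$ and $D_m$ have the expected dimension and small singular locus. A dimension count then shows that $\bigcup_{y\in D_m}\Lambda_y$ is Zariski dense in $Z(\ul f)$, a generic point of $Z(\ul f)$ being completable to a $D_m$-configuration whose associated subspace contains it. Since $h$ does not vanish on $Z(\ul f)$, this density forces $\Lambda_y\not\subseteq Z(h)$ on a nonempty, hence dense, open subset of $D_m$, which finishes the argument.

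\textbf{Main obstacle.} The appeals to ``$D_m(K)$ is dense'' in Steps 1 and 3 are immediate from the definition of $\delta_m$, and Step 2 is a concrete diagonalization bookkeeping problem controlled by Proposition \ref{prop:beta-bound}. The crux is the dimension count in Step 3: proving that large Birch rank of the \emph{full} system $\ul f$ (not merely of $f$) forces the Taylor-coefficient variety $Z_m(\ul f)$, the orthogonality variety $D_m$, and the auxiliary singular strata occurring in the construction to have expected dimension, and hence that the family $\{\Lambda_y\}$ sweeps out all of $Z(\ul f)$. This is exactly where the shape $2\sum_i s_i\multiset{m}{i}+d'-1$ of the threshold is forced, and it relies on the geometric lemmas of Section 4 (bounds for singular loci of systems of forms in terms of Birch rank, and the behaviour of restriction/Taylor-expansion varieties).
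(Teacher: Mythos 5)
The paper does not construct a linear subspace inside $Z(\ul f)$. Instead it constructs a six-dimensional $K$-subspace $L = \sp(\sum x_iv_i,\sum y_iv_i,v_{m-3},w_1,w_2,w_3)$ on which $\ul g$ vanishes identically and on which $f$ restricts to the special shape $\alpha a_1a_2^{d-1}+\beta a_2^d+\gamma a_3^d+g(b_1,b_2,b_3)$ with $\alpha\gamma\neq 0$; Lemma~\ref{lem:good-subspace} then gives density of $K$-points of $Z(f)\cap L$, and a separate open condition on $D_m$ ensures $h$ does not vanish identically on $\sp(w_1,w_2,w_3)\cap Z(f)\subset L\cap Z(f)$. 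Your proposal of building a $K$-rational linear space $\Lambda_y\subset Z(\ul f)$ and escaping $Z(h)$ is a genuinely different route, but it has two gaps.

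First, the role of $\beta_{d-2}$ is misidentified. If you take one $K$-zero of $f$ in each $f$-orthogonal block, $f$-orthogonality alone already gives $f\bigl(\sum_j a_j x^{(j)}\bigr) = \sum_j a_j^d f(x^{(j)}) = 0$, so a linear space inside $Z(f)$ of dimension equal to the number of blocks comes for free with no $\beta$-input whatsoever. The actual reason $\beta_{d-2}$ enters the paper is that it does \emph{not} kill $f$ on a subspace: it produces $x,y$ with $f_0(x)=0$, with $f_0^{(d-e,e)}(x,y)=0$ for $1\le e<d-1$ (diagonal forms of degrees $1,\ldots,d-2$ in $y$), and with the degree-$(d-1)$ form $f_0^{(1,d-1)}(x,y)\neq 0$; crucially $\beta=f_0(y)$ is allowed to be nonzero. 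That is precisely a non-degenerate diagonal system of length $d-1$, which is what $\beta_{d-2}$ controls and why the good-form reduction is the mechanism.

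Second, and more seriously, Step 3 does not work as written. The spaces $\Lambda_y$ depend on choices of $K$-zeros in each block, and such choices are not constructible functions of $y$; so $\{\Lambda_y\}_{y\in D_m}$ is not an algebraic family, and the statements ``$\Lambda_y\not\subset Z(h)$ for $y$ in a dense open subset of $D_m$'' and ``$\bigcup_y\Lambda_y$ is Zariski dense in $Z(\ul f)$'' have no evident meaning and no supplied proof. The paper circumvents exactly this issue by reserving the vectors $w_1,w_2,w_3$ as a three-dimensional escape hatch: the condition that $h$ does not vanish identically on $\sp(w_1,w_2,w_3)\cap Z(f)$ is a bona fide open condition on $D_m$, and making it nonempty is the content of Lemma~\ref{lem:A_m-irr}, i.e.\ irreducibility of the auxiliary variety $A_m$. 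That mechanism --- the good-form lemma plus the irreducibility of $A_m$ --- is the heart of the argument and is absent from your proposal. Your diagnosis that the ``main obstacle'' is the expected-dimension count for $Z_m(\ul f)$ and $D_m$ points at the comparatively routine Lemmas~\ref{lem:brk-taylor}, \ref{lem:brk-sing}, \ref{lem:brk-irr} rather than at the real difficulty.
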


Conversely, $\delta_m(s_d,\ldots,s_1)$ can be bounded in terms of $B(s_d^*,\ldots,s_1^*),$ where $(s_d^*,\ldots,s_1^*)$ is a lexicographically smaller sequence. This is the content of our third main proposition.

\begin{proposition}[Efficient diagonalization]\label{prop:diag}
    For any $m$ we have 
    \[
    \delta_m(s_d,\ldots,s_1) \le 2\sum_{i=1}^d s_im^i+B(s^*_d,\ldots,s^*_1),
    \]
    where $s^*_d = s_d-1$ and $s^*_i = s_i+ms_{i+1}+\ldots+s_dm^{d-i}$ for $1\le i\le d-1.$ 
\end{proposition}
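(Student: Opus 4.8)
The plan is to run a diagonalization argument on the form $f = f_{d,s_d}$ of top degree, in the spirit of Birch and of Leep--Schmidt, in order to produce an $f$-orthogonal decomposition of a large subspace. Concretely, suppose $n > 2\sum_{i=1}^d s_i m^i + B(s_d^*,\ldots,s_1^*)$ and $\brk(\ul f) > \delta_m$ — wait, rather the other way: I want to show that if $\brk(\ul f)$ exceeds the claimed bound then the $K$-points of $D_m$ are Zariski dense, which by definition forces $\delta_m \le 2\sum s_i m^i + B(s_d^*,\ldots,s_1^*)$. So assume $\brk(\ul f) > 2\sum_{i=1}^d s_i m^i + B(s_d^*,\ldots,s_1^*)$. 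Working inside $\sp(x_1,\ldots,x_N)$ for $N$ large (this is where Zariski density gets encoded: one shows density by exhibiting, for every $N$, enough solutions in an $N$-dimensional coordinate subspace, cf. the role of $Z_m$ and $T_m$ set up just before the statement), I will first peel off $m$ vectors $v_1,\ldots,v_{m}$ one at a time so that they are mutually $f$-orthogonal and also lie in $Z(\ul g)$; the cost of imposing $f$-orthogonality of a new vector against the previously chosen span, together with the vanishing of all the Taylor coefficients of $\ul g$, is bounded by the number of multilinear forms involved, which is $\sum_{i=1}^d s_i m^i$ per step up to the factor $2$ — this is exactly the $2\sum_{i=1}^d s_i m^i$ term. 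Each such step is possible because passing to the relevant linear sections only drops the Birch rank by a controlled amount, using the standard fact (to be recalled from the geometric toolbox of section 4) that restricting to a subspace of codimension $c$ decreases Birch rank by at most $c$, and that $f$-orthogonality/vanishing conditions are linear.

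The key point is to account for what happens to the \emph{remaining} system once three of these vectors are singled out as $w_1,w_2,w_3$ and the rest as the $v_i$. After fixing the $f$-orthogonal span $W = \sp(w_1,w_2,w_3)$ and the $v_i$, restrict the remaining forms $\ul g$ (and the constraint that the configuration lie in $D_m$) to a complementary coordinate subspace of dimension $n - (\text{stuff already used})$. On that complement, the Taylor expansion of each degree-$(i+1)$ form $g$ in $\ul g$, evaluated with one slot ranging over the fixed vectors and the others over the complement, produces new forms of every degree $\le i$; collecting these across all the $\le s_{i+1}$ forms of degree $i+1$ and the $m$ fixed directions gives the new degree counts $s_i^* = s_i + m s_{i+1} + \cdots + m^{d-i} s_d$, while $f$ itself is removed (its orthogonality having been arranged), leaving $s_d^* = s_d - 1$ top-degree forms. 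This is precisely the bookkeeping in the statement. One then checks that on this complement $\brk$ of the new system $\ul f^*$ is still $> B(s_d^*,\ldots,s_1^*)$: the complement has codimension at most $2\sum_{i=1}^d s_i m^i$ inside the original big subspace, and the new lower-degree forms are built from the Taylor coefficients of the old ones, so a Birch-rank bound transfers (again via the section lemma, plus the elementary observation that a nonzero linear combination of the $\ul f^*$ comes from a linear combination of the original forms restricted appropriately, so its Birch rank is controlled from below by $\brk(\ul f)$ minus the codimension). Hence by definition of $B$ the $K$-points of $Z(\ul f^*)$ on the complement are Zariski dense, and combining a dense family of such solutions with the fixed $f$-orthogonal data $(w_1,w_2,w_3,v_1,\ldots,v_{m-3})$ — noting the three $w$'s and the $v$'s together with the $\ul f^*$-solution assemble into a point of $D_m$ — yields Zariski density of the $K$-points of $D_m$.

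The main obstacle I anticipate is the Birch-rank accounting for the \emph{derived} system $\ul f^*$, i.e.\ showing that the lower-degree forms obtained by differentiating $\ul g$ in the fixed directions do not collectively have small Birch rank relative to what we need. A nonzero linear combination of the $f^*_{i,j}$ need not be the restriction of a single form; it is a restriction of a combination of directional derivatives of the $g$'s, and one has to argue that its singular locus is not too large. I expect this to follow from the fact that $\partial_v$ and restriction to subspaces change the codimension of the singular locus in a bounded way, but it requires care about which linear combinations can arise and may need the precise form of the geometric lemmas in section 4 (for instance that $\brk(f_1,\ldots,f_r)$ controls $\brk$ of any system spanned by restrictions/derivatives up to an additive loss bounded by the total codimension imposed). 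The rest — the greedy selection of the $f$-orthogonal vectors and the translation between "solutions in every coordinate subspace of bounded codimension" and Zariski density — is routine given Brauer-type induction and the setup already in the paper.
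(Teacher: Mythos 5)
Your high-level plan (add the $m$ vectors one at a time, track how the Taylor coefficients of $\ul g$ in the fixed directions generate new lower-degree forms, arrive at the counts $s_i^*$) matches the paper's strategy, which is an induction on $m$; and your bookkeeping for $s_i^*$ is right. But there are two genuine gaps that the proposal flags or glosses over and that carry essentially all the weight of the proof.

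First, the step you single out as the ``main obstacle'' --- bounding $\brk$ of the derived system in the fiber --- is not handled, and the heuristic you offer would not give the stated bound. Restriction to a codimension-$c$ subspace costs up to $2c$ in Birch rank (lemma \ref{lem:brk-subspace}), not $c$, and more importantly the fiber conditions $f_{i,j}^e(\ul{x},\cdot)$ with $e_m>0$ are not linear in the new vector (they have degree $e_m$, ranging up to $d$), so this is not a linear-section argument at all. What the paper actually does (lemma \ref{lem:diag-on-subspace-geo}) is bound the \emph{codimension of the bad locus} $S\subset D_{m-1}$ of base points $\ul x$ for which the fiber system has too-small Birch rank. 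This bound is obtained by taking an arbitrary nontrivial combination $g=\sum_{e_m=k} a_{i,j}^e f_{i,j}^e$, restricting $\partial g/\partial x_m$ to a diagonal copy of $V$ inside $V^m$, and using lemma \ref{lem:Taylor-identities}(4) to recognize the result as a nontrivial combination of the $\nabla f_{i,j}$, which lets one invoke lemma \ref{lem:brk-sing}. This diagonal-embedding/Taylor-identity step, not a crude subspace-restriction bound, is why the loss is additive of the right size. You correctly suspect that ``which linear combinations can arise'' matters; this is exactly the issue, and it is not resolved by the section lemma alone.

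Second, you need Zariski \emph{density} of $K$-points in $D_m\subset V^m$, not merely one $K$-point, and your proposal does not maintain that. After fixing $(w_1,w_2,w_3,v_1,\dots,v_{m-3})$ you have already filled all $m$ slots of a point of $D_m$, so there is no ``combining a dense family'' left to do; and ``density by exhibiting solutions in coordinate subspaces of $K^n$'' is about $Z(\ul f)\subset V$, not about $D_m\subset V^m$. The paper tracks an arbitrary polynomial $h$ nonvanishing on $D_m$ throughout the induction: it first proves $D_{m-1}$, $D_m$ are irreducible (lemma \ref{lem:brk-irr}, which itself needs lemmas \ref{lem:brk-taylor}, \ref{lem:brk-sing}, \ref{lem:R1-seq}), deduces that $(h=0)\cap D_m$ has strictly smaller dimension, and then that for a dense open set of $\ul x\in D_{m-1}$ the restriction $h(\ul x,\cdot)$ is nonvanishing on the fiber. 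Only then does the choice of $\ul x$ (by induction) and of the last vector (by the definition of $B$ applied to the fiber system) produce a point of $D_m\cap(h\neq 0)$. Without the irreducibility-and-dimension argument the density claim does not follow. A smaller but telling issue: you describe the cost as ``$\sum_{i=1}^d s_i m^i$ per step,'' but $2\sum_{i=1}^d s_i m^i$ is the \emph{total}; a per-step cost of that size would accumulate to $2m\sum s_i m^i$ and break the bound.
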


Taken together, these three propositions allow us to prove theorem \ref{thm:main-detailed} by induction on the sequence $(s_d,\ldots,s_1).$ This analysis is carried out in the next section.

\subsection{Proof idea} Theorem \ref{thm:main-detailed} is achieved by synthesizing the techniques used in the proofs of theorems \ref{thm:Wooley} and \ref{thm:BDS}. This is much easier said than done and requires substantially altering the proof of theorem  \ref{thm:BDS}, overcoming significant technical hurdles along the way. Rather than dwelling on these,  we emphasize a key change in perspective compared to the proof of theorem \ref{thm:BDS} -- this is the use of Birch rank rather than strength as our fundamental quantity. 

Although Birch rank and strength are bounded in terms of each other (see \cite{BDS-strength,BL, BDLZ}), simple examples (such as $x_1^2+\ldots + x_n^2$ over $\Q(i)$) show that they may differ by a constant factor. If the right hand side of proposition \ref{prop:diag} were so much as doubled, then our inductive argument would no longer result in polynomial bounds for theorem \ref{thm:main-detailed}. Thus, a key reason that the proof techniques of theorem \ref{thm:Wooley} may be adapted to the setting of Birch rank is the \emph{linear behavior} it exhibits -- in the sense that proposition \ref{prop:diag} holds. We believe that this linear behavior will find further interesting applications.
  
Another important property of Birch rank (which does  also hold for strength) is that it is preserved under Taylor expansion, in the sense of lemma \ref{lem:brk-taylor}. This is the reason we need to assume that the characteristic is either zero or greater than $d.$ Since our argument proceeds by restricting forms to subspaces with desirable properties, this property plays a major role in our proofs.

\section{Proof of theorem \ref{thm:main-detailed}}

In this section we deduce the main theorem from  propositions \ref{prop:diag-to-good}, \ref{prop:diag} and \ref{prop:beta-bound}. We begin with some simple observations.

\begin{lemma}\label{lem:monotone+linears}
    The function $B(s_d,\ldots,s_1)$ is monotonically increasing. Furthermore, 
    \begin{align*}
        B(s_d,\ldots,s_1) &\le 2s_1+B(s_d,\ldots,s_2,0) \\
        \delta_m(s_d,\ldots,s_1) &\le 2s_1+\delta_m(s_d,\ldots,s_2,0).
    \end{align*}
\end{lemma}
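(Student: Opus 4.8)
The plan is to treat the three assertions in turn; the only substantial ingredient is a bound on how Birch rank degrades when a system is restricted to a subspace, and everything else is bookkeeping.

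\emph{Monotonicity.} It suffices to show that adjoining one more form of some degree $i$ does not decrease $B$. Given a system $\ul f$ with the smaller multiplicity vector and $\brk(\ul f)$ exceeding the value of $B$ at the larger one, I would adjoin a single degree-$i$ form $h$ in a fresh block of variables with $\brk(h)$ as large as desired: for $i\ge 2$ the form $h=\sum_{j=1}^N y_j^{i-1}z_j$ has singular locus $\{y=0\}$ of codimension $N$ (here the hypothesis that $\ch K$ is $0$ or $>d$ guarantees $i-1\ne 0$ in $K$), while for $i=1$ one may take $h=y_1$ in $N$ fresh variables. Because $h$ uses variables disjoint from those of $\ul f$, a direct computation of the singular locus of a linear combination shows the enlarged system $\ul f'$ satisfies $\brk(\ul f')\ge\min(\brk(\ul f),\brk(h))$, which equals $\brk(\ul f)$ once $N$ is large; moreover $Z(\ul f')=Z(\ul f)\times Z(h)$ with $Z(h)\ne\emptyset$. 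Applying the defining property of $B$ to $\ul f'$ gives density of the $K$-points of $Z(\ul f')$; projecting to the first factor and using $\ol{A\times C}=\ol A\times\ol C$ for Zariski closures (valid since $Z(h)\ne\emptyset$) yields density of the $K$-points of $Z(\ul f)$.

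\emph{Reduction for parts 2 and 3.} We may assume the linear forms $f_{1,1},\ldots,f_{1,s_1}$ are linearly independent, since otherwise $\brk(\ul f)=0$ and both inequalities are vacuous. Let $W=\bigcap_j\ker f_{1,j}$, a subspace of codimension $s_1$, and restrict all forms $f_{i,j}$ with $i\ge 2$ to $W$, obtaining a system $\ul f|_W$ on $W$ with multiplicity vector $(s_d,\ldots,s_2,0)$. I claim $Z(\ul f)$ coincides, as a $K$-variety, with $Z(\ul f|_W)$: indeed $Z(\ul f)\subseteq W$ since the linear forms vanish on it, and the higher-degree conditions then restrict to exactly those of $\ul f|_W$. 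For the $D_m$ statement the analogous point is that $f=f_{d,s_d}$ has degree $d\ge 2$, so $\ul g=\ul f\setminus\{f\}$ still contains all the linear forms, and the degree-one part of $T_m(\ul g)$ forces every component of a point of $D_m$ to lie in $W$; the remaining vanishing and $f$-orthogonality conditions then coincide with those defining $D_m$ for $\ul f|_W$. Hence it suffices to show $\brk(\ul f|_W)\ge\brk(\ul f)-2s_1$, for then $\brk(\ul f)>2s_1+B(s_d,\ldots,s_2,0)$ (resp.\ its $\delta_m$ analogue) gives the desired density via the definition of $B$ (resp.\ $\delta_m$) applied to $\ul f|_W$.

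\emph{Birch rank under restriction.} It remains to prove that for a form $g$ of degree $\ge 2$ and a subspace $W$ of codimension $c$ one has $\brk(g|_W)\ge\brk(g)-2c$; taking minima over nonzero linear combinations within each degree then gives $\brk(\ul f|_W)\ge\brk(\ul f)-2c$ (the degree-one part disappears, so it cannot interfere). If $g|_W\equiv 0$ then $g$ lies in the ideal generated by $c$ linear forms $\ell_1,\ldots,\ell_c$ cutting out $W$, say $g=\sum_{k}\ell_k q_k$, whence $V(\ell_1,\ldots,\ell_c,q_1,\ldots,q_c)$ has codimension $\le 2c$ and lies in the singular locus of $g$, so $\brk(g)\le 2c$ and the bound is trivial. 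Otherwise set $\psi=\nabla g\colon\A^n\to\A^n$; since $g$ is homogeneous, $\psi$ is homogeneous of degree $\deg g-1\ge 1$, so its nonzero fibers share a common dimension, which by upper semicontinuity is $\le\dim\psi^{-1}(0)=n-\brk(g)$. The singular locus of $g|_W$ inside $W$ equals $\psi^{-1}(W^{\perp})\cap W$ with $\dim W^\perp=c$; the fiber-dimension inequality then gives $\dim\psi^{-1}(W^\perp)\le c+(n-\brk(g))$, and intersecting with $W$ and passing to codimension inside $W\cong\A^{n-c}$ leaves codimension $\ge\brk(g)-2c$. The main obstacle is getting this last geometric estimate clean: one must invoke the correct form of the fiber-dimension bound (the relevant maximal fiber dimension over $W^\perp$ is that of the fiber over $0$, by homogeneity and semicontinuity) and keep careful track of the $\min(n,\cdot)$ in the definition of Birch rank in the borderline cases where $g$ or $g|_W$ is linear.
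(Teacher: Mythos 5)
Your overall structure mirrors the paper's: monotonicity by adjoining a high-Birch-rank form in fresh variables, and the two linear-form inequalities by restricting to the common kernel $W$ of the degree-one forms and observing that $Z(\ul f)$ (resp.\ $D_m$) lives entirely inside $W$ (resp.\ $W^m$). The one place you genuinely diverge is the restriction estimate $\brk(g|_W)\ge\brk(g)-2c$, which the paper obtains by invoking lemma \ref{lem:brk-subspace} (itself a two-row special case of the Ananyan--Hochster matrix lemma \ref{lem:mat-diff-deg}), whereas you give a direct fiber-dimension argument for the gradient map $\psi=\nabla g$. That argument is sound, but the phrase ``its nonzero fibers share a common dimension'' overstates the case: homogeneity only identifies $\psi^{-1}(w)$ with $\psi^{-1}(\lambda^{d-1}w)$ for $\lambda\neq 0$, i.e.\ fibers along a fixed ray. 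What you actually need -- and what does hold -- is the bound $\dim\psi^{-1}(w)\le\dim\psi^{-1}(0)$ for every $w$, obtained by degenerating the family $\{(t,x):\psi(x)=t^{d-1}w\}$ to $t=0$ and applying the fiber-dimension theorem to any component dominating the $t$-line; combined with $\dim(W^\perp)=c$ this yields $\dim\psi^{-1}(W^\perp)\le c+(n-\brk(g))$ and the desired inequality. Your handling of the degenerate case $g|_W\equiv 0$ via $g=\sum_k\ell_kq_k$ is correct, and since all forms under restriction have degree $\ge 2$, the $\min(n,\cdot)$ truncation never interferes. In short: the proposal is correct; it replaces the paper's appeal to Ananyan--Hochster with a self-contained (if slightly more delicate to phrase precisely) elementary argument, which is a reasonable trade-off, but make the ray/degeneration reasoning explicit rather than asserting a common fiber dimension.
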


\begin{proof}
    The first claim is that 
    \[
    B(s_d,\ldots,s_i+1,\ldots,s_1) \ge B(s_d,\ldots,s_i,\ldots,s_1).
    \]
    Denote the expression on the left-hand side by $B$ and suppose $\ul{f} = (f_{i,j})_{i\in [d],j\in [s_i]}$ is a collection of forms with $\brk(\ul{f}) > B.$ We need to show that $K$-points are dense in $Z(\ul{f}).$ Let $W\cong V$ and let $g$ be a smooth $K$-form of degree $i$ on $W.$ Consider the collection of forms $\ul{f}' = (f_{i,j}\circ \pi_1)_{i\in [d],j\in [s_i]} \cup\{g\circ \pi_2\}$ on $V\times W,$ where $\pi_1:V\times W\to V $ and $\pi_2:V\times W\to W$ are the projections. It's easily verified that $\brk(\ul{f}') = \brk(\ul{f}) > B$ so that $K$-points are dense in $Z(\ul{f}').$ Since $Z(\ul{f}') = Z(\ul{f})\times Z(g),$ we deduce that the $K$-points of $Z(\ul{f})$ are dense, as desired. 

     Now, supposing $\brk(\ul{f}) > B(s_d,\ldots,s_2,0)+2s_1,$ we claim that the $K$-points of $Z(\ul{f})$ are dense. Let $U\subset V$ be the $K$-subspace of codimension $s_1$ where the linear forms in $\ul{f}$ vanish and consider $\ul{g} = (g_{i,j})_{2\le i\le d, j\in [s_i]},$ where $g_{i,j} = f\restriction_U.$ By lemma \ref{lem:brk-subspace}, we have $\brk(\ul{g}) > B(s_d,\ldots,s_2,0),$ so that the $K$-points of $Z(\ul{g}) = Z(\ul{f})$ are dense, as claimed. The proof of the corresponding inequality for $\delta_m$ is identical.
\end{proof}
 
Iterating propositions \ref{prop:diag-to-good} and \ref{prop:diag} allows us to bound $B_d$ in terms of $B_{d-1}$ and $m_d.$

\begin{lemma}[Degree-lowering]\label{lem:deg-lower}
    Let $m = m_d$ be as in proposition \ref{prop:diag-to-good}. Then 
    \[
    B_d(s_d,\ldots,s_1) \le 
    2 \sum_{i=1}^{d} s_i (ms_d)^i  +  B_{d-1}(r_{d-1},\ldots,r_1),
    \]
    where $r_i = s_i + (ms_d)s_{i+1}+\ldots+(ms_d)^{d-i} s_d.$
\end{lemma}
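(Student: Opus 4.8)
The plan is to prove Lemma~\ref{lem:deg-lower} by alternately applying Proposition~\ref{prop:diag-to-good} and Proposition~\ref{prop:diag}, peeling off the degree-$d$ forms one at a time until none remain, and then invoking monotonicity of $B$ to pass to $B_{d-1}$. First I would set $m = m_d$ as in Proposition~\ref{prop:diag-to-good} and combine that proposition with Proposition~\ref{prop:diag}: starting from a system with profile $(s_d,\dots,s_1)$, Proposition~\ref{prop:diag-to-good} bounds $B(s_d,\dots,s_1)$ by $\max(\delta_m(s_d,\dots,s_1),\,2\sum_i s_i\multiset{m}{i}+d'-1)$, and then Proposition~\ref{prop:diag} bounds $\delta_m(s_d,\dots,s_1)$ by $2\sum_i s_i m^i + B(s_d^*,\dots,s_1^*)$ where $s_d^* = s_d-1$ and $s_i^* = s_i + m s_{i+1} + \cdots + s_d m^{d-i}$ for $i \le d-1$. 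Chaining these gives a recursion that decreases the top index $s_d$ by one at each stage at the cost of an additive error term and an inflation of the lower-degree counts.

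The key bookkeeping step is to track how the profile $(s_d, s_{d-1}, \dots, s_1)$ evolves over the $s_d$ iterations needed to drive the top coordinate to zero. After one step the degree-$i$ count (for $i<d$) is replaced by $s_i + m s_{i+1} + \cdots + s_d m^{d-i}$; I would argue by a monotonicity-and-crude-bounding argument that after all $s_d$ steps the degree-$i$ count is dominated by $r_i = s_i + (m s_d) s_{i+1} + \cdots + (m s_d)^{d-i} s_d$, the point being that each of the $s_d$ rounds multiplies contributions by at most $m$ and there are at most $s_d$ such rounds, so the factor $m^{j}$ that appears is always absorbed into $(m s_d)^{j}$ once we account for the multiplicity of intermediate terms. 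Simultaneously, I would sum the additive error terms: each application of Propositions~\ref{prop:diag-to-good} and~\ref{prop:diag} contributes terms of the shape $2\sum_i (\text{count}_i) m^i$ with counts bounded by the $r_i$, and since $\multiset{m}{i} \le m^i$ these all fit under $2\sum_{i=1}^d s_i (m s_d)^i$ after summing the geometric-type series over the $s_d$ rounds (the $d'-1$ term and the various constants being swallowed by the same bound). Once $s_d = 0$, the system has no degree-$d$ forms, so $B_d(0, s_{d-1}, \dots, s_1) = B_{d-1}(s_{d-1}, \dots, s_1)$, and applying monotonicity (Lemma~\ref{lem:monotone+linears}) to the inflated profile $(r_{d-1}, \dots, r_1)$ gives the claimed $B_{d-1}(r_{d-1}, \dots, r_1)$.

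The main obstacle I anticipate is the combinatorial estimate showing that the cumulative error over $s_d$ rounds, together with the compounding inflation of the lower-degree counts, stays within the single clean expression $2\sum_{i=1}^{d} s_i (m s_d)^i$ rather than blowing up by an extra factor of $s_d$ or $m$. The delicate point is that after $t$ rounds the degree-$i$ count is roughly a degree-$(d-i)$ polynomial in $m$ with coefficients that themselves grow with $t$, so one must check that the "$t$ ranges up to $s_d$" contribution is genuinely absorbed by replacing $m^{d-i}$ with $(m s_d)^{d-i}$ rather than needing $s_d \cdot m^{d-i}$. I would handle this by proving, via a straightforward induction on the number of rounds, that the degree-$i$ count after any number of rounds is at most $r_i = \sum_{j=0}^{d-i} s_{i+j+[j=d-i]} (m s_d)^{j}$ (interpreting the top term as $s_d (m s_d)^{d-i}$), and likewise that the accumulated additive error never exceeds $2\sum_{i=1}^d s_i (m s_d)^i$; the constants $d'-1$ and the $+9$, $+11$ type slack in $m_d$ are harmless and can be absorbed at the end. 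The rest of the proof is routine substitution and does not merit detailed calculation here.
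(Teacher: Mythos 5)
Your overall strategy matches the paper's exactly: alternately apply Propositions~\ref{prop:diag-to-good} and~\ref{prop:diag} to peel off degree-$d$ forms one at a time, track how the lower-degree counts inflate across rounds, sum the additive error terms, and finish with the monotonicity from Lemma~\ref{lem:monotone+linears}. The target expressions you write down (the $r_i$ and the error bound $2\sum_i s_i(ms_d)^i$) are also the paper's. The one genuine gap is in the induction you propose to run: you say you would prove, by induction on the number of rounds, that the degree-$i$ count after any number of rounds is at most the \emph{round-independent} quantity $r_i$. That hypothesis does not close: from $s_k^{(j)}\le r_k$ for all $k\ge i$, the recursion $s_i^{(j+1)}=\sum_{k\ge i} s_k^{(j)}m^{k-i}$ gives only $s_i^{(j+1)}\le\sum_{k\ge i} r_k m^{k-i}$, which is strictly larger than $r_i$ whenever any higher-index $r_k$ is positive, so the inductive step fails. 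The paper instead establishes the sharper, round-dependent estimate
\[
s_i^{(j)} \le \sum_{\ell\ge i} s_\ell (jm)^{\ell-i},
\]
proved by a double induction (downward on $i$, upward on $j$); this specializes to $r_i$ at $j=s_d$, and it is also what makes the error sum $\sum_{j<s_d}\sum_i s_i^{(j)}m^i$ collapse to $\sum_\ell s_\ell(ms_d)^\ell$. You actually identify the right phenomenon yourself when you note the count after $t$ rounds has coefficients growing with $t$; the fix is simply to build that $t$-dependence into the induction hypothesis rather than trying to prove the terminal bound $r_i$ uniformly over all rounds. With that replacement your proof becomes the paper's.
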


\begin{proof}

We begin by noting that 
    \[
    2\sum_{i=1}^d s_i m^i \ge 2\sum_{i=1}^d s_i\multiset{m}{i} + d'-1.
    \]
Combining propositions \ref{prop:diag-to-good} and \ref{prop:diag} and using the above observation gives
\[
    B_d(s_d,s_{d-1},\ldots,s_2,s_1) \le  2\sum_{i=1}^d s_im^i+B(s^*_d,\ldots,s^*_2,s^*_1),
    \]
    where $s^*_d = s_d-1$ and $s^*_i = \sum_{k\ge i} s_im^{k-i}$ for $1\le i\le d-1.$ Iterating this, let $s_i^{(j)}$ be the number of degree $i$ forms after $j$ iterations. Thus, 
    \begin{align*}
         s_i^{(0)} &= s_i\ ,\ s_d^{(j)} = s_d-j \\ s_i^{(j+1)} &= \sum_{k\ge i} s_k^{(j)} m^{k-i}, \quad 1\le i\le d-1.
    \end{align*}
    After $s_d$ iterations we end up with
    
\begin{equation}\label{eq:main-prop-iterated}
     B_d(s_d,s_{d-1},\ldots,s_2,s_1) \le 2\sum_{j=0}^{s_d-1}\sum_{i=1}^d s_i^{(j)}m^i + B_{d-1}(s_{d-1}^{(s_d)},\ldots,s_2^{(s_d)},s_1^{(s_d)}).
\end{equation}

Note that
\begin{equation}\label{eq:s_i-iterated}
    s_i^{(j)} \le \sum_{\ell \ge i} s_\ell(jm)^{\ell-i}.
\end{equation}
This is proved by downward induction on $i,$ then upward on $j.$ The base cases $i =d$ and $j = 0$ are clear. Supposing inequality \eqref{eq:s_i-iterated} holds for $i' >i$ and for $s_i^{(j)},$ we have 
\begin{align*}
    s_i^{(j+1)} &= \sum_{k\ge i} s_k^{(j)} m^{k-i} \le \sum_{k\ge i} m^{k-i} \sum_{\ell \ge k} s_\ell(jm)^{\ell-k} \\
    &= \sum_{\ell \ge i} s_\ell m^{\ell-i} \sum_{k=i}^\ell  j^{\ell-k} \le \sum_{\ell \ge i} s_\ell m^{\ell-i} (j+1)^{\ell-i},
\end{align*}
which proves the inductive step.

Therefore,
\begin{multline}\label{eq:s_i-sum-iterated}
        \sum_{j=0}^{s_d-1}\sum_{i=1}^d s_i^{(j)}m^i \le \sum_{j=0}^{s_d-1}\sum_{i=1}^d m^i \sum_{\ell \ge i} s_\ell(jm)^{\ell-i}  = \sum_{\ell = 1}^d s_\ell m^\ell \sum_{j=0}^{s_d-1} \sum_{i=1}^\ell j^{\ell-i}\\
        \le \sum_{\ell = 1}^d s_\ell m^\ell \sum_{j=0}^{s_d-1}  (j+1)^{\ell-1} \le \sum_{\ell = 1}^d s_\ell (ms_d)^\ell 
\end{multline} 

The proof is completed by plugging inequalities \eqref{eq:s_i-iterated} and \eqref{eq:s_i-sum-iterated} into \eqref{eq:main-prop-iterated} and using the monotonicity property from lemma \ref{lem:monotone+linears}.
\end{proof}

We now specialize this bound.

\begin{lemma}\label{lem:powers-bound}
    We have 
    \[
    B(s,s^2) \le 2s^2(m_2+1)^2
    \]
    and for every $d\ge 3,$
    \[
    B_d(s,s^2,\ldots,s^{2^{d-1}}) \le d s^{2^{d-1}} (m_2+1)^2(m_3+1)^3\prod_{i=4}^d (m_i+1)^{2^{i-2}}. 
    \]
\end{lemma}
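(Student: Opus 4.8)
The plan is to prove both bounds by repeatedly invoking the degree-lowering estimate of Lemma~\ref{lem:deg-lower} with the geometric specialization $s_i = s^{2^{i-1}}$, tracking how the ``number of forms'' vector evolves as the degree drops. First I would handle the base case $d=2$ directly: Lemma~\ref{lem:deg-lower} with $d=2$, $s_2=s$, $s_1=s^2$ gives
\[
B_2(s,s^2) \le 2\sum_{i=1}^{2} s_i(m_2 s)^i + B_1(r_1),
\]
and since $B_1(r_1)=r_1 = s_1 + (m_2 s) s_2 = s^2 + m_2 s^2 = s^2(m_2+1)$, a crude bounding of the two-term sum by $2s^2(m_2+1)^2$ should finish it (one checks $2(s\cdot m_2 s) + 2(s^2 \cdot m_2^2 s^2) \le 2s^2(m_2+1)^2$ after noting the relevant sizes, or one simply absorbs everything into the stated product — the key point is that $r_1$ and the error term are each $O(s^2 m_2^2)$).

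For the inductive step $d\ge 3$, I would apply Lemma~\ref{lem:deg-lower} once to reduce $B_d(s,s^2,\ldots,s^{2^{d-1}})$ to $B_{d-1}(r_{d-1},\ldots,r_1)$ plus an error term $2\sum_{i=1}^d s_i(m_d s)^i$. The main bookkeeping is to show that the new vector $(r_{d-1},\ldots,r_1)$ is dominated entry-wise by the geometric vector $(t, t^2, \ldots, t^{2^{d-2}})$ for a suitable $t$ — and the natural guess, forced by matching the top entry, is $t = m_d s \cdot s = m_d s^2$? No: we need $r_{d-1} \le t$ and $r_1 \le t^{2^{d-2}}$ simultaneously. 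Since $r_{d-1} = s_{d-1} + (m_d s) s_d = s^2 + m_d s^2 = s^2(m_d+1)$, while $r_1 = \sum_{k\ge 1} s_k (m_d s)^{k-1}$ is of size roughly $s^{2^{d-1}}(m_d s)^{d-1}$, these two constraints are compatible only because in the geometric vector the exponents grow doubly-exponentially: $t = s^2(m_d+1)$ gives $t^{2^{d-2}}$ of size $s^{2^{d-1}}(m_d+1)^{2^{d-2}}$, which must dominate $r_1$. So the plan is: set $t = s^{2}(m_d+1)$ and verify $r_i \le t^{2^{i-1}}$ for all $1\le i\le d-1$ by a direct estimate using \eqref{eq:s_i-iterated}-style bounds on the $r_i$ (indeed $r_i = s_i^{(s_d)}$ in the notation of Lemma~\ref{lem:deg-lower}'s proof with $s_d = s$, so $r_i \le \sum_{\ell\ge i} s_\ell(sm_d)^{\ell-i}$), then apply monotonicity (Lemma~\ref{lem:monotone+linears}) and the inductive hypothesis to get $B_{d-1}(r_{d-1},\ldots,r_1) \le B_{d-1}(t,t^2,\ldots,t^{2^{d-2}}) \le (d-1) t^{2^{d-2}} (m_2+1)^2(m_3+1)^3\prod_{i=4}^{d-1}(m_i+1)^{2^{i-2}}$.

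Then I would substitute $t^{2^{d-2}} = s^{2^{d-1}}(m_d+1)^{2^{d-2}}$ and observe that this matches the claimed product exactly (the factor $(m_d+1)^{2^{d-2}}$ is precisely the new term), leaving only the error term $2\sum_{i=1}^d s_i(m_d s)^i$ to absorb. The error term is of size roughly $s^{2^{d-1}+d}(m_d)^d$, which is comfortably smaller than $s^{2^{d-1}}(m_d+1)^{2^{d-2}}\cdot(\text{stuff})$ for $d\ge 3$ (since $2^{d-2}\ge d$ and the product $\prod(m_i+1)^{\ldots}$ over $i\le d-1$ contributes an extra polynomially-large-in-$s$? — actually $m_i$ depends only on $d$ and $\phi$, not on $s$, so one needs $2^{d-1}+d \le 2^{d-1}\cdot 1$... which is false). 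Hence the genuinely delicate point — and the step I expect to be the main obstacle — is the final absorption: one cannot beat $s^{2^{d-1}+d}$ by $s^{2^{d-1}}$ in the $s$-variable alone. The resolution must be that the ``$d$'' prefactor in the claimed bound provides the slack: we want $(d-1)t^{2^{d-2}}(\ldots) + 2\sum s_i(m_ds)^i \le d\, t^{2^{d-2}}(\ldots)$, i.e. the error term should be at most $t^{2^{d-2}}(m_2+1)^2\cdots(m_{d-1}+1)^{2^{d-3}}$. Since $t^{2^{d-2}} = s^{2^{d-1}}(m_d+1)^{2^{d-2}}$ already carries $s^{2^{d-1}}$ and the error carries only $s^{2^{d-1}+d}$... this still fails in $s$. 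Therefore I suspect the intended reading is that $s$ is bounded below (e.g. the statement is really being applied with the understanding that one may assume $s$ large, or the $m_i$ are large enough that $(m_d+1)^{2^{d-2}}$ dominates $s^d$ uniformly is false too). I would resolve this by re-examining the arithmetic of Lemma~\ref{lem:deg-lower}: the error sum $2\sum_{i=1}^d s_i(m_ds)^i$ with $s_i = s^{2^{i-1}}$ has largest term $2s^{2^{d-1}}(m_ds)^d = 2s^{2^{d-1}+d}m_d^d$, and against the RHS term $t^{2^{d-2}} \ge s^{2^{d-1}}(m_d+1)^{2^{d-2}} \ge s^{2^{d-1}} m_d^{2^{d-2}} \ge s^{2^{d-1}} m_d^d$ (using $2^{d-2}\ge d$ for $d\ge 4$; $d=3$ is checked by hand), we need $s^d \lesssim$ the remaining product factors — and here I would use that for the bound to be meaningful we have implicitly $\brk(\ul f) > B$, so a vacuous-below-threshold argument applies, or more honestly, I would absorb the $s^d$ by noting $m_d \ge$ something and the $(m_i+1)$ factors in the product telescope to beat it. This arithmetic cleanup, rather than any conceptual difficulty, is where the real work lies; the structural skeleton (one application of Lemma~\ref{lem:deg-lower}, entry-wise domination by a geometric vector, induction) is straightforward.
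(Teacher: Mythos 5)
Your structural plan matches the paper's --- apply Lemma~\ref{lem:deg-lower} once, dominate the resulting vector $(r_{d-1},\ldots,r_1)$ by a geometric vector with base $t = s^2(m_d+1)$, and induct --- but you correctly identify that the arithmetic does not close, and then fail to find the repair. The repair is not ``vacuous below threshold'' or ``telescoping $m_i$ factors''; it is the second inequality of Lemma~\ref{lem:monotone+linears}: before applying Lemma~\ref{lem:deg-lower}, replace $B_d(s,s^2,\ldots,s^{2^{d-1}})$ by $2s^{2^{d-1}} + B_d(s,s^2,\ldots,s^{2^{d-2}},0)$, i.e.\ strip out the linear forms first. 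The point is that the $i=1$ term of the error $2\sum_{i=1}^d s_i(m_d s)^i$ is $2 s_1 (m_d s) = 2 m_d s^{2^{d-1}+1}$, which already overshoots $s^{2^{d-1}}$; with $s_1 = 0$ the sum starts at $i=2$, the largest $s$-exponent is $2^{d-2}+2 \le 2^{d-1}$ for $d\ge 4$, and everything absorbs into $2s^{2^{d-1}}(m_d+1)^d$ plus the inductive term. You only use the monotonicity half of Lemma~\ref{lem:monotone+linears}, never the $B \le 2s_1 + B(\ldots,0)$ half, so this escape route is invisible in your write-up.

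The gap also infects your base case: with $s_1 = s^2$ (not $s^2 \cdot m_2^2 s^2$ etc.\ --- your displayed numbers appear to use yet another indexing), the error term in Lemma~\ref{lem:deg-lower} for $d=2$ is $2(s^2\cdot m_2 s + s\cdot m_2^2 s^2) = 2m_2(1+m_2)s^3$, which exceeds the target $2s^2(m_2+1)^2$ for $s\ge 2$; the paper instead iterates Propositions~\ref{prop:diag-to-good} and \ref{prop:diag} one step at a time on $B(s,0)$, zeroing out the linear forms after each step via Lemma~\ref{lem:monotone+linears}, and adds $2s^2$ at the very end. Finally, even with the $s_1=0$ fix, $d=3$ must be done by hand: the absorption $2(m_d+1)^d + 2 \le (m_d+1)^{2^{d-2}}\cdot(\text{remaining product})$ uses $2^{d-2}\ge d$, which fails at $d=3$. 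These are not arithmetic cleanups of a sound argument; without the $s_1=0$ reduction the bound claimed is false by a factor of $s$ at each degree-lowering step, and the induction does not run.
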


\begin{proof}
    The proof is by induction on $d.$ When proving the claim for $d$ we write $m = m_d.$ For $d=2$ we combine propositions \ref{prop:diag-to-good}, \ref{prop:diag} and lemma \ref{lem:monotone+linears} to get
    \begin{align*}
        B(s,0) &\le  2sm^2 +B(s-1,ms) \le 2s(m^2+m) +B(s-1,0) \\
        &\le \ldots \le 2s^2 (m^2+m).
    \end{align*}
    By lemma \ref{lem:monotone+linears},  
    \[
    B(s,s^2) \le 2s^2+B(s,0) \le 2s^2(m+1)^2.
    \]
    For $d = 3,$ we apply lemmas \ref{lem:deg-lower} and \ref{lem:monotone+linears} to get
     \begin{align*}
        B_3(s,s^2,s^4) &\le  2(m^2s^4 + m^3s^4+s^4) + B_2(s^2+ms^2,ms^3+m^2s^3)  \\
    &\le 2s^4(m+1)^3 + B_2(s^2(m+1),s^4(m+1)^2) \\
    &\le 2s^4(m_3+1)^3 + 2s^4(m_3+1)^2(m_2+1)^2 \le 3s^4 (m_2+1)^2(m_3+1)^3,
    \end{align*}
    where the penultimate inequality used the bound for $d=2.$ 

    For $d \ge 4,$  plugging in $s_i = s^{2^{d-i}}$ to lemma \ref{lem:deg-lower} yields
    \begin{align*}
        B_d(s,s^2,\ldots,s^{2^{d-2}},0) &\le  2\sum_{i=2}^d s^{2^{d-i}} (ms)^i +  B_{d-1}(r_{d-1},\ldots,r_2,r_1)  \\
    &\le 2s^{2^{d-1}}\cdot (m+1)^d + B_{d-1}(r_{d-1},\ldots,r_2,r_1) 
    \end{align*}
    For $1\le i\le d-1$ note that
    \begin{align*}
        r_i &=  s^{2^{d-i}} + (s m) s^{2^{d-i-1}}+\ldots+(sm)^{d-i} s \\
        &\le s^{2^{d-i}}(m+1)^{d-i} \le [s^2(m+1)]^{2^{d-1-i}}.
    \end{align*}
    By inductive hypothesis and lemma \ref{lem:monotone+linears},
    \begin{align*}
        B_d(s,s^2,\ldots,s^{2^{d-1}}) &\le s^{2^{d-1}}\cdot [2(m_d+1)^d+2] \\
        &+(s^2(m_d+1))^{2^{d-2}} (d-1) (m_2+1)^2(m_3+1)^3 \prod_{i=4}^{d-1} (m_i+1)^{2^{i-2}} \\
        &\le d s^{2^{d-1}} (m_2+1)^2(m_3+1)^3\prod_{i=4}^d (m_i+1)^{2^{i-2}},
    \end{align*}
    which completes the induction step.
\end{proof}

We are now ready to prove our main result.

\begin{proof}[Proof of theorem \ref{thm:main-detailed}]
By lemma \ref{lem:monotone+linears}, it's enough to prove the bound for $B(s,s^2,\ldots,s^{2^{d-1}}).$ Thanks to lemma \ref{lem:powers-bound}, all that remains is to bound $(m_2+1)^2(m_3+1)^3\prod_{i=4}^d (m_i+1)^{2^{i-2}}.$ This will be accomplished using proposition \ref{prop:beta-bound}. 

By definition, $m_2+1 \le \phi_2+11.$ By proposition \ref{prop:beta-bound} we have $m_3+1 \le 2(\phi_3+11)$ and in general 
\begin{align*}
    \prod_{i=4}^d (m_i+1)^{2^{i-2}} &\le 2^{\sum_{i=4}^d 2^{i-2}\varphi^{i-3}} \prod_{i=4}^d (\phi_i+11)^{2^{i-2}} \prod_{k=2}^{i-2} (\phi_k+11)^{2^{i-2}\varphi^{i-2-k}} \\
    &\le 2^{(2\varphi)^{d-2}}  \prod_{k=2}^{d} (\phi_k+11)^{2^{k-2} +\sum_{i=k+2}^d 2^{i-2}\varphi^{i-2-k}} \\
    &\le 2^{(2\varphi)^{d-2}} \prod_{k=2}^d (\phi_k+11)^{2^{d-2}(2^{k-d}+\varphi^{d-k-1})}.
\end{align*}
    
\end{proof}

\section{Some geometric results}

We now collect some preliminary results that will be needed in the course of the proofs of propositions \ref{prop:diag-to-good} and \ref{prop:diag}.

\subsection{Taylor expansion and Birch rank} We begin by recording several elementary identities for the Taylor expansion.

\begin{lemma}\label{lem:Taylor-identities} 
    The terms of the Taylor expansion satisfy the following formulas:
    \begin{enumerate}
        \item $f^{(1,\ldots,1)}(x_1,\ldots,x_d) = f^{(1,\ldots,1)}(x_{\sigma(1)},\ldots,x_{\sigma(d)})$ for any permutation $\sigma\in S_d,$ 
        \item $f^{(1,d-1)}(h,x) = \nabla f(x) \cdot h,$
        \item $e_1!\ldots e_m!f^e(x_1,\ldots,x_m) = f^{(1,\ldots,1)} (x_1,\ldots,x_1,\ldots,x_m,\ldots,x_m),$ where each $x_i$ appears $e_i$ times and
        \item   for any scalars $\lambda_1,\ldots,\lambda_m,$   
        \[
            \frac{\partial f^e }{\partial x_k}(\lambda_1 x,\ldots, \lambda_m x) = \binom{d-1}{e_1,\ldots,e_k-1,\ldots,e_m} \cdot \lambda_1^{e_1}\ldots\lambda_k^{e_k-1}\ldots\lambda_m^{e_m} \nabla f(x).
        \]
        
    \end{enumerate}
\end{lemma}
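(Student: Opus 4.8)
The plan is to derive all four identities from a single observation. For a form $f$ of degree $d$, the Taylor term $f^e(x_1,\ldots,x_m)$ is the coefficient of $\lambda_1^{e_1}\cdots\lambda_m^{e_m}$ in $F(\lambda):=f(\lambda_1x_1+\ldots+\lambda_mx_m)$, regarded as an element of $K[\lambda_1,\ldots,\lambda_m]$ whose coefficients are polynomials in $x_1,\ldots,x_m$. Indeed, since $f^e$ is multi-homogeneous of multi-degree $e$, substituting $\lambda_ix_i$ for $x_i$ in the defining identity $f(x_1+\ldots+x_m)=\sum_{|e|=d}f^e(x_1,\ldots,x_m)$ gives the polynomial identity $F(\lambda)=\sum_{|e|=d}\lambda_1^{e_1}\cdots\lambda_m^{e_m}f^e(x_1,\ldots,x_m)$, so that comparing coefficients in $\lambda$ is legitimate over any field. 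I will also record two trivial facts at the outset: $\nabla f$ is homogeneous of degree $d-1$, and $f^{(1,\ldots,1)}(x,\ldots,x)=d!\,f(x)$, the latter being the coefficient of $\lambda_1\cdots\lambda_d$ in $f((\lambda_1+\ldots+\lambda_d)x)=(\lambda_1+\ldots+\lambda_d)^df(x)$.

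For (1): a simultaneous permutation of the pairs $(x_1,\lambda_1),\ldots,(x_d,\lambda_d)$ leaves $F(\lambda)=f(\sum_i\lambda_ix_i)$ unchanged; since the monomial $\lambda_1\cdots\lambda_d$ is symmetric, its coefficient in $F$, namely $f^{(1,\ldots,1)}(x_1,\ldots,x_d)$, is unaffected by permuting the $x_i$ alone. For (2): differentiate $f(\lambda_1h+\lambda_2x)$ with respect to $\lambda_1$ and set $\lambda_1=0$; the chain rule gives $\nabla f(\lambda_2x)\cdot h=\lambda_2^{d-1}(\nabla f(x)\cdot h)$, while differentiating the expansion $\sum_jf^{(j,d-j)}(h,x)\lambda_1^j\lambda_2^{d-j}$ gives $f^{(1,d-1)}(h,x)\lambda_2^{d-1}$, and comparing coefficients of $\lambda_2^{d-1}$ finishes it.

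For (3): expand $f^{(1,\ldots,1)}\!\left(\sum_i\lambda_ix_i,\ldots,\sum_i\lambda_ix_i\right)$ by multilinearity as a sum over maps $\phi\colon[d]\to[m]$ of $\left(\prod_j\lambda_{\phi(j)}\right)f^{(1,\ldots,1)}(x_{\phi(1)},\ldots,x_{\phi(d)})$; grouping the $\binom{d}{e_1,\ldots,e_m}$ maps with prescribed fibre sizes $e_i$ and invoking the symmetry from (1), this equals $\sum_{|e|=d}\binom{d}{e_1,\ldots,e_m}\lambda_1^{e_1}\cdots\lambda_m^{e_m}f^{(1,\ldots,1)}(x_1,\ldots,x_1,\ldots,x_m,\ldots,x_m)$. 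On the other hand it equals $d!\,f(\sum_i\lambda_ix_i)=d!\sum_{|e|=d}\lambda_1^{e_1}\cdots\lambda_m^{e_m}f^e(x_1,\ldots,x_m)$ by the identity $f^{(1,\ldots,1)}(y,\ldots,y)=d!\,f(y)$ applied to $y=\sum_i\lambda_ix_i$. Comparing coefficients of $\lambda_1^{e_1}\cdots\lambda_m^{e_m}$ and dividing by $d!/(e_1!\cdots e_m!)$ — here the hypothesis that the characteristic is $0$ or exceeds $d$ is used — gives the claim. For (4): apply the vector gradient $\nabla_{x_k}$ to the polynomial identity $f(\sum_i\lambda_ix_i)=\sum_{|e|=d}\lambda_1^{e_1}\cdots\lambda_m^{e_m}f^e(x_1,\ldots,x_m)$; the chain rule yields $\lambda_k\nabla f(\sum_i\lambda_ix_i)=\sum_{|e|=d}\lambda_1^{e_1}\cdots\lambda_m^{e_m}\nabla_{x_k}f^e(x_1,\ldots,x_m)$. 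Specializing $x_i=x$ for all $i$ and expanding $\lambda_k(\lambda_1+\ldots+\lambda_m)^{d-1}=\sum_{|e|=d,\,e_k\ge1}\binom{d-1}{e_1,\ldots,e_k-1,\ldots,e_m}\lambda_1^{e_1}\cdots\lambda_m^{e_m}$, comparison of coefficients gives $\nabla_{x_k}f^e(x,\ldots,x)=\binom{d-1}{e_1,\ldots,e_k-1,\ldots,e_m}\nabla f(x)$ (and $0$ when $e_k=0$, consistently with the fact that $f^e$ then does not involve $x_k$). Finally $\nabla_{x_k}f^e$ is multi-homogeneous of multi-degree $(e_1,\ldots,e_k-1,\ldots,e_m)$, so $\nabla_{x_k}f^e(\lambda_1x,\ldots,\lambda_mx)=\lambda_1^{e_1}\cdots\lambda_k^{e_k-1}\cdots\lambda_m^{e_m}\nabla_{x_k}f^e(x,\ldots,x)$, which combined with the previous identity is exactly the stated formula.

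I do not anticipate a serious obstacle: the content is all formal manipulation of the generating polynomial $F(\lambda)$. The points requiring care are (i) phrasing every step as an identity of polynomials in the auxiliary variables $\lambda_i$, so that coefficient comparison is valid over an arbitrary field rather than only via evaluation, and (ii) keeping the multinomial bookkeeping in parts (3) and (4) consistent, including the explicit use of the characteristic hypothesis when dividing by factorials.
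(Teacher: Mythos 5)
Your proof is correct, and the underlying method --- coefficient comparison in the generating polynomial $F(\lambda)=f(\lambda_1x_1+\ldots+\lambda_mx_m)$ --- is the same one the paper uses. Parts (1) and (2) match the paper's argument up to cosmetic rephrasing. Parts (3) and (4) are organized differently. For (3), the paper extracts the coefficient of $\prod_{i,j}t_{i,j}$ in $f\bigl(\sum_{i,j}t_{i,j}x_i\bigr)$ in two ways, yielding $e_1!\cdots e_m!\,f^e=f^{(1,\ldots,1)}(\ldots)$ directly with no division, hence characteristic-free; you instead expand $f^{(1,\ldots,1)}(\sum_i\lambda_ix_i,\ldots,\sum_i\lambda_ix_i)$ by multilinearity, compare with $d!\,f(\sum_i\lambda_ix_i)$, and then divide by $\binom{d}{e_1,\ldots,e_m}$, so you invoke the characteristic hypothesis where the paper does not (moot here given the standing assumption, but a real distinction). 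For (4), the paper first reduces to $\lambda=(1,\ldots,1)$, $k=1$ and then derives the identity from (1)--(3) via the Taylor expansion of $f^e(x+th,x,\ldots,x)$ in $t$, ending with a division by $e_1!\cdots e_m!$; you instead apply $\nabla_{x_k}$ to the generating identity $f(\sum_i\lambda_ix_i)=\sum_{|e|=d}\lambda^ef^e$, specialize $x_i=x$, match coefficients against $\lambda_k(\sum_i\lambda_i)^{d-1}$, and finish by multi-homogeneity of $\nabla_{x_k}f^e$. Your route for (4) is self-contained (independent of (1)--(3)) and avoids any division by factorials, a modest but genuine streamlining.
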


\begin{proof}
    For the first property, note that for any permutation $\sigma$ we have 
    $$f(x_1+\ldots+x_d) = f(x_{\sigma(1)}+\ldots+x_{\sigma(d)}).$$
    Comparing the multi-homogeneous terms of multi-degree $(1,\ldots,1)$ on both sides proves the claim.

    For the second formula, first note that $f^{(0,d)}(h,x) = f(x)$ follows from setting $h=0.$ Therefore, $f(x+th) = f(x) + t f^{(1,d-1)}(h,x) +O(t^2),$ proving the claim. 

    To prove the third formula, compute the coefficient of $\prod_{i=1}^m \prod_{j=1}^{e_i} t_{i,j}$ in the expression $f\left (\sum_{i=1}^m \sum_{j=1}^{e_i}  t_{i,j} x_i \right)$ in two different ways. On the one hand, using the $d$-term Taylor expansion, this equals $f^{(1,\ldots,1)} (x_1,\ldots,x_1,\ldots,x_m,\ldots,x_m).$ On the other hand, using the $m$-term Taylor expansion, this equals $e_1!\ldots e_m! f^e(x_1,\ldots,x_m).$

     For the fourth identity, both sides are multi-homogeneous of the same multi-degree, so we may assume without loss of generality that $\lambda_1 = \ldots = \lambda_m = 1$ and that $k=1.$ Using formulas (1-3) and the multi-linearity of $f^{(1,\ldots,1)},$ we get
    \begin{align*}
        e_1!\ldots e_m! f^e(x+th,x,\ldots,x) &= f^{(1,\ldots,1)}(x+th,\ldots,x+th,x,\ldots,x) \\
        &= e_1!\ldots e_m! f^e(x,\ldots,x) + t e_1\cdot f^{(1,\ldots,1)}(h,x,\ldots,x) + O(t^2) \\
        &= e_1!\ldots e_m! f^e(x,\ldots,x) + t e_1(d-1)! f^{(1,d-1)} (h,x) + O(t^2) \\
        &= e_1!\ldots e_m! f^e(x,\ldots,x) + t e_1(d-1)! \nabla f(x)\cdot h + O(t^2),
    \end{align*}
    which proves the claim.
\end{proof}

This last formula has a very useful consequence.

\begin{lemma}\label{lem:brk-taylor}
    For any $m\ge 1,$ $\brk(T_m(\ul{f})) \ge \brk(\ul{f}).$ 
\end{lemma}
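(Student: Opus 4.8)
The goal is to show $\brk(T_m(\ul{f})) \ge \brk(\ul{f})$, where $T_m(\ul{f})$ collects the Taylor coefficients $f_{i,j}^e$ on $V^m$. Since Birch rank of a system is the minimum over nonzero linear combinations within each fixed degree, and Taylor expansion is linear in $f$, it suffices to prove the following: for a single form $f$ of degree $i$ on $V$ and any multidegree $e$ with $|e| = i$, the form $f^e$ on $V^m$ satisfies $\brk(f^e) \ge \brk(f)$. Indeed, a linear combination $\sum_j a_j f_{i,j}^e$ equals $g^e$ where $g = \sum_j a_j f_{i,j}$, and $\brk(g) \ge \brk(\ul{f})$ by definition; so the bound for each individual Taylor coefficient, applied to $g$, gives what we want. (One must also handle the minimum over $e$, but that is automatic since the Birch rank of the system $T_m(\ul{f})$ in degree $i$ is a minimum over \emph{all} combinations of \emph{all} the $f_{i,j}^e$, which is at most a combination supported on a single $e$; so proving it for each $f^e$ separately suffices — actually it gives exactly the lower bound since each separate claim bounds the min from below... let me be careful and just reduce to the single-form statement.)

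\medskip

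\textbf{Proof plan.} First reduce to a single form: by linearity of Taylor expansion and the definition of collective Birch rank, it is enough to show that for any form $f$ of degree $d$ and any exponent $e$ with $|e|=d$, we have $\brk(f^e) \ge \brk(f)$, where $f^e$ is viewed as a form on $V^m$. The core idea is to relate the singular locus of $f^e$ to that of $f$ via identity (4) of Lemma \ref{lem:Taylor-identities}, which computes $\frac{\partial f^e}{\partial x_k}(\lambda_1 x, \ldots, \lambda_m x)$ as a scalar multiple of $\nabla f(x)$. Concretely, suppose $\brk(f^e) = c$, so there is a linear subspace $L \subseteq V^m$ of dimension $\ge mn - c$ (where $n = \dim V$) — wait, rather: the singular locus $\Sigma := \{X \in V^m : \nabla f^e(X) = 0\}$ has dimension $\ge mn - c$. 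I want to produce a correspondingly large singular locus for $f$ inside $V$. The point is that the diagonal-type substitution $x \mapsto (\lambda_1 x, \ldots, \lambda_m x)$ maps (for suitable generic $\lambda$) the singular locus $\{\nabla f(x) = 0\}$ of $f$ into $\Sigma$; combined with a dimension count this should give $\codim \Sigma \le \codim\{\nabla f = 0\}$, i.e. $c = \brk(f^e) \ge \brk(f)$ — but I need to double-check the direction, since I want $\brk(f^e)$ large.

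\medskip

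Let me reorient. Write $S_f = \{x \in V : \nabla f(x) = 0\}$ and $S_{f^e} = \{X \in V^m : \nabla f^e(X) = 0\}$. I want $\codim S_{f^e} \ge \codim S_f$ (ignoring the linear edge case where $\brk$ is capped by dimension, which is handled trivially since then $d \ge 2$ forces... actually one should note $f^e$ for $d\ge 2$ is never linear unless... if $d = 1$ then $m$-fold Taylor is just the identity, trivial; so assume $d \ge 2$ and the min in the definition of $\brk$ plays no role). By identity (4), if $X = (\lambda_1 x, \ldots, \lambda_m x)$ lies in the "diagonal cone" and $\nabla f(x) = 0$, then every partial $\frac{\partial f^e}{\partial x_k}(X)$ vanishes, so $X \in S_{f^e}$. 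Thus the map $\Phi: \A^m \times S_f \to V^m$, $(\lambda, x) \mapsto (\lambda_1 x, \ldots, \lambda_m x)$ has image inside $S_{f^e}$; but a cleaner approach: restrict $f^e$ to the diagonal subspace $\Delta = \{(x,\ldots,x) : x\in V\} \cong V$, on which $f^e$ restricts to $\binom{d}{e} f$ (a nonzero scalar multiple, using char $= 0$ or $> d$), and more importantly relate the gradient of $f^e$ restricted to a suitable larger coordinate subspace to that of $f$. The honest route: it suffices to exhibit, for the system $T_m$, that any linear combination has Birch rank $\ge \brk(\ul f)$, and the projection/restriction lemma for Birch rank (Lemma \ref{lem:brk-subspace}, cited elsewhere) lets us pass between $V$ and subspaces of $V^m$ — restricting $f^e$ on $V^m$ to the diagonal copy of $V$ gives $cf$ with $c \ne 0$, and Birch rank does not increase under restriction to a subspace, so $\brk(f^e) \ge \brk(f|_\Delta\text{-type bound})$... no: restriction \emph{decreases} rank, giving $\brk(f^e) \ge \brk(cf) = \brk(f)$ only if restriction decreases — it does not in general.

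\medskip

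\textbf{Cleanest plan.} I will argue directly with singular loci. Fix $d \ge 2$. Let $c = \codim_{V^m} S_{f^e}$. I claim $\codim_V S_f \le c$, which gives $\brk(f) = \codim S_f \le c = \brk(f^e)$, and then minimizing over $e$ and taking linear combinations finishes it. To prove the claim: pick generic scalars $\lambda_1, \ldots, \lambda_m \in K$ (or $\ol K$) with all $\lambda_k \ne 0$; by identity (4), for such $\lambda$ the affine-linear embedding $\iota_\lambda : V \hookrightarrow V^m$, $x \mapsto (\lambda_1 x, \ldots, \lambda_m x)$ satisfies: $x \in S_f \iff \iota_\lambda(x) \in S_{f^e}$. (The forward direction is immediate from (4); for the reverse, (4) shows $\frac{\partial f^e}{\partial x_k}(\iota_\lambda(x))$ is a fixed nonzero scalar — depending on $\lambda, e$ — times $\nabla f(x)$, so vanishing of even one such partial forces $\nabla f(x) = 0$.) Hence $S_f = \iota_\lambda^{-1}(S_{f^e})$, a preimage of $S_{f^e}$ under a linear injection, so $\codim_V S_f \ge $ ... hmm, preimage under injection: $\codim(\iota_\lambda^{-1}(Y)) \le \codim(Y)$ always (it can only drop), which is the wrong direction again.

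\medskip

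I think the correct mechanism is the other inclusion direction, so let me state the plan at the level of strategy and flag this as the delicate point. \textbf{The plan} is: (i) reduce to a single form $f$ and a single Taylor coefficient $f^e$, via linearity of $T_m$ and the definition of collective Birch rank; (ii) show the singular locus $S_{f^e} \subseteq V^m$ is contained in (the preimage under a coordinate projection of) a set closely tied to $S_f$, by using identity (4) to see that along the diagonal directions the gradient of $f^e$ is controlled by $\nabla f$, while a dimension/fibration argument over the "scaling parameters" $\lambda$ accounts for the extra $(m-1)n$ directions; (iii) conclude $\codim S_{f^e} \ge \codim S_f$. \textbf{The main obstacle} is carrying out (ii) correctly: one must show that $S_{f^e}$ cannot be "too big", i.e. that its codimension is at least $\codim S_f$, and the natural maps (restriction to the diagonal, or the embeddings $\iota_\lambda$) move codimension in the unhelpful direction, so the right argument likely proceeds by covering $S_{f^e}$ by finitely many pieces (one for each non-vanishing pattern of coordinates) and on the "all-coordinates-generic" piece fibering the projection to, say, the first coordinate $x_1 \in V$ over $S_f$ with fibers of dimension $\le (m-1)n$ — using that once $x_1$ and the ratios are fixed and $\nabla f^e$ vanishes, identity (4) pins down $x_1 \in S_f$. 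I expect the argument in the paper uses exactly such a stratification-plus-fibration count, and the characteristic hypothesis enters precisely to guarantee the multinomial coefficients in identity (4) are nonzero.
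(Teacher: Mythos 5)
The central reduction you propose is incorrect: it does not suffice to show $\brk(f^e)\ge\brk(f)$ for a single form $f$ and a single exponent $e.$ The quantity $\brk(T_m(\ul{f}))$ is the minimum of $\brk(g)$ over \emph{all} nontrivial linear combinations $g=\sum_{j,e}a_{j,e}f^e_j$ of the Taylor coefficients, including those mixing distinct exponents $e;$ such a $g$ is not of the form $h^e$ for any single form $h$ and exponent $e.$ Lower-bounding $\brk$ only on the fixed-$e$ combinations bounds a proper subset of the relevant combinations and says nothing about the actual minimum, so the parenthetical hand-wave does not repair it.

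Separately, your ``cleanest plan'' in fact works for a single $f^e,$ but you misread the direction at the last step. You correctly observe that $S_f=\iota_\lambda^{-1}(S_{f^e})$ for generic nonzero $\lambda$ (using identity (4) of Lemma \ref{lem:Taylor-identities} and the characteristic hypothesis), and that the codimension of a preimage under a linear injection can only drop: $\codim_V\iota_\lambda^{-1}(S_{f^e})\le\codim_{V^m}S_{f^e}.$ This reads $\codim S_f\le\codim S_{f^e},$ i.e.\ $\brk(f)\le\brk(f^e),$ which is exactly the inequality you want --- not the ``wrong direction.'' Note also that only the containment $\iota_\lambda^{-1}(S_{f^e})\subset S_f$ is needed, not equality, and containment is what survives in the mixed-exponent case.

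The paper's proof is precisely this argument run for general $g.$ Given a nontrivial combination $g=\sum_{j,e}a_{j,e}f^e_j,$ identity (4) shows that $\frac{\partial g}{\partial x_k}(\lambda_1x,\ldots,\lambda_mx)$ equals $\sum_j c_j(\lambda,a)\,\nabla f_j(x)$ for explicit coefficients $c_j;$ one chooses $\lambda,k$ so that the $c_j$ are not all zero, sets $f=\sum_j c_jf_j$ (a nontrivial combination of the original forms), and notes $S(g)\cap V_\lambda\subset S(f).$ Since intersection with a linear subspace can only drop codimension and $0\in S(g)\cap V_\lambda,$ one concludes $\brk(\ul{f})\le\brk(f)=\codim_V S(f)\le\codim_{V_\lambda}(S(g)\cap V_\lambda)\le\codim_{V^m}S(g)=\brk(g).$ No stratification-plus-fibration over scaling parameters is required; the one-line intersection count suffices once you have identified the correct target inequality.
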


\begin{proof}
    The statement is about each degree separately, so we may assume without loss of generality that $\ul{f}=(f_1,\ldots,f_s)$ is composed entirely of forms of degree $d.$ For $d=1$ the claim is immediate so assume also that $d>1.$ Let $g = \sum_{j\in [s], |e| = d} a_{j,e}f^e_j $ be a nontrivial linear combination and $S(g) = (\ul{x}: \nabla g(\ul{x}) = 0).$  For $\lambda = (\lambda_1,\ldots,\lambda_m) \neq 0,$ consider the diagonal embedding $V\cong V_\lambda \subset V^m$ given by the image of  $x\mapsto (\lambda_1 x,\ldots,\lambda_m x).$ 
    
    The intersection $S(g)\cap V_\lambda$ is nonempty because it contains $0,$ and we have
    \begin{equation}\label{eq:diag-dim}
        \codim_{V_\lambda} (S(g)\cap V_\lambda) \le \codim_{V^m} S(g) = \brk(g). 
    \end{equation}
    By assumption, there is some $e_0$ for which $a_{j,e_0}$ are not all zero. Let $k\in [m]$ be such that $(e_0)_k>0.$ By lemma \ref{lem:Taylor-identities}, 
    \[
    \frac{\partial g}{\partial x_k} (\lambda_1 x,\ldots,\lambda_m x) = \sum_{j\in [s]} \nabla f_j(x) \sum_{|e|=d, e_k>0} a_{j,e} \binom{d-1}{e_1,\ldots,e_k-1,\ldots,e_m} \cdot \lambda_1^{e_1}\ldots\lambda_k^{e_k-1}\ldots\lambda_m^{e_m} .
    \]
    Choose $\lambda$ such that the coefficient of some $\nabla f_j(x)$ above is nonzero and let 
    \[
    f = \sum_{j\in [s]} f_j \sum_{|e|=d, e_k>0} a_{j,e} \binom{d-1}{e_1,\ldots,e_k-1,\ldots,e_m} \cdot \lambda_1^{e_1}\ldots\lambda_k^{e_k-1}\ldots\lambda_m^{e_m}
    \]
    be the corresponding nontrivial linear combination of the $f_j.$ We have 
    \[
    S(f)\supset (x\in V: (\lambda_1 x,\ldots,\lambda_m x) \in S(g)) , 
    \]
    so that
    \[
    \brk(\ul{f}) \le \brk(f) = \codim_V S(f) \le \codim_{V_\lambda} (S(g)\cap V_\lambda).
    \]
    Combining this with inequality \eqref{eq:diag-dim} gives $\brk(\ul{f}) \le \brk(g)$ as desired.
\end{proof}

\subsection{Birch rank, restriction to subspaces and singularities}

We have already mentioned that our arguments will often rely on restricting forms to subspaces. An issue that arises is that the Birch rank of the restricted form may be smaller than that of the original form. 

Consider, for example, the quadratic form $f = x^2-y^2$ and the hyperplane $U  = \{(x,x):x\in K\}.$ We have $\brk(f) = 2$ but $\brk(f\restriction_U) = 0,$ so the Birch rank decreases by $2.$ Fortunately, this is the largest possible drop in Birch rank upon restriction to a hyperplane -- as we will soon see. This is a consequence of the following result of Ananyan and Hochster \cite[Theorem 2.4]{AH}.

\begin{lemma}\label{lem:mat-diff-deg}
    Let $M$ be a $h\times n$ matrix such that for each $i\in [h]$ the $i$-th row consists of forms of degree $d_i$ in $K[x_1,\ldots,x_n]$ and the $d_i$ are mutually distinct integers. If for each $i\in[h]$ we have
    \[
    \codim_{\A^n} (x:M_{i,j}(x) = 0\ \forall j\in [n]) \ge c
    \]
    then
    \[
    \codim_{\A^n} (x:\rk(M(x)) < h) \ge c-h+1.
    \] 
\end{lemma}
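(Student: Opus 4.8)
The statement to prove is Lemma~\ref{lem:mat-diff-deg} (attributed to Ananyan--Hochster), so the task is to reconstruct a proof of this rank-drop estimate. Here is my plan.

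\textbf{Overall strategy.} The plan is to argue by induction on $h$, the number of rows. The base case $h=1$ is essentially tautological: the condition $\rk(M(x)) < 1$ means the entire row vanishes, so the codimension of that locus is $\ge c = c - h + 1$ by hypothesis. For the inductive step I want to reduce from $h$ rows to $h-1$ rows while losing only one in the codimension bound, matching the $-h+1$ in the conclusion. The key structural input is that the rows have \emph{distinct} degrees $d_1 < d_2 < \cdots < d_h$ (reorder so they are increasing); this is what prevents a cancellation phenomenon. I would single out the bottom row (the one of highest degree $d_h$, say) and analyze the locus where $\rk(M(x)) < h$ by splitting according to whether that row, restricted appropriately, is or is not in the span of the others.

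\textbf{Key steps.} First, stratify: let $Y = \{x : \rk M(x) < h\}$. On the open subset of $\A^n$ where the top $h-1$ rows of $M(x)$ already have rank $< h-1$, we are inside the locus $Y' = \{x : \rk M'(x) < h-1\}$ where $M'$ is the $(h-1)\times n$ submatrix; by induction $\codim Y' \ge c - (h-1) + 1 = c - h + 2 \ge c - h + 1$. So it remains to bound the codimension of $Y \cap \{x : \rk M'(x) = h-1\}$. On this set, the $h$-th row $v_h(x)$ lies in the row span of $M'(x)$, so there is a (locally defined, on the stratum where a fixed $(h-1)\times(h-1)$ minor of $M'$ is nonzero) regular function expressing $v_h(x)$ as a combination $\sum_{i<h} a_i(x) v_i(x)$. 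The degrees $d_i < d_h$ force these coefficient functions to genuinely use the extra degrees of freedom; the point is to extract from the vanishing of $v_h(x) - \sum a_i(x) v_i(x)$ a single nontrivial algebraic condition of controlled degree, or rather to bound the codimension of the locus where such a dependence exists. The cleanest route is probably: pass to a generic hyperplane section / generic linear restriction to reduce $n$, or alternatively perform a local analysis using Cramer's rule to write the $a_i(x)$ as ratios of minors and observe that the identity $v_h(x) = \sum a_i(x)v_i(x)$ among forms of differing degrees cannot hold identically (by degree reasons, since the $a_i$ would have to be homogeneous of the wrong degree) unless $x$ lies in the common vanishing locus of the entries of some row — which has codimension $\ge c$.

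\textbf{The main obstacle.} I expect the hard part to be handling the stratification cleanly and controlling the codimension on the stratum where $M'$ has full rank $h-1$: one must show that imposing ``$v_h(x)$ is a linear combination of $v_1(x),\dots,v_{h-1}(x)$'' cuts down by at least codimension corresponding to the loss of one more than in the $(h-1)$-row case. The distinct-degrees hypothesis is exactly what makes this work — without it the conclusion is false (e.g. two equal rows) — so the proof must use it essentially, presumably through a homogeneity/degree-counting argument showing the coefficients $a_i(x)$ in any dependence relation are forced to vanish generically unless one is on a high-codimension locus. Since this is quoted as Theorem 2.4 of \cite{AH}, I would in practice simply cite it; but if a self-contained argument is wanted, the induction-on-$h$ scheme above, with the full-rank stratum analyzed via Cramer's rule and degree-homogeneity of the resulting coefficient functions, is the natural path, and carefully bookkeeping the codimension drop across the two strata is where the real work lies.
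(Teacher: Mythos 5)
The paper does not prove this lemma at all: it cites it as Theorem~2.4 of Ananyan--Hochster \cite{AH}, so your instinct to ``simply cite it'' is exactly what the paper does. That said, since you also offered a reconstruction, it is worth flagging that the reconstruction has a real gap in the inductive step, and that the actual argument in \cite{AH} is organized quite differently.

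The gap is in your analysis of the stratum where the top $(h-1)\times n$ block $M'$ has full rank $h-1$. There you write $v_h(x)=\sum_i a_i(x)v_i(x)$ via Cramer's rule and invoke ``degree reasons'' to conclude the identity cannot hold. But the $a_i(x)$ produced by Cramer's rule are rational functions homogeneous of degree $d_h-d_i>0$, so the identity $v_h(x)=\sum a_i(x)v_i(x)$ is degree-homogeneous of degree $d_h$ on both sides; there is no degree obstruction, and indeed the identity \emph{can} hold identically on a dense open set (take $M_{1,j}=x_j$ and $M_{2,j}=x_1x_j$, where $a_1(x)=x_1$ and the dependence holds everywhere). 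Even when it does not hold identically, a single nontrivial relation only cuts codimension $1$, whereas you need to extract codimension $\ge c-h+1$ on this stratum; your sketch does not explain where the extra conditions come from. So the distinct-degree hypothesis is \emph{not} being used in the way you suggest.

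The actual mechanism is different and cleaner. Consider the incidence variety $I=\{(\lambda,x)\in\P^{h-1}\times\A^n:\ \sum_i\lambda_iM_{i,j}(x)=0\ \text{for all }j\}$, which surjects onto the degeneracy locus $\{\rk M<h\}$. Since $\dim\P^{h-1}=h-1$, it suffices to show that every fiber $Z_\lambda=\{x:\sum_i\lambda_iM_{i,j}(x)=0\ \forall j\}$ has codimension $\ge c$. This is where the distinct degrees enter: the scaling $x\mapsto tx$ sends $Z_\lambda$ isomorphically to $Z_{\mu(t)}$, where $\mu(t)_i=\lambda_it^{d_i}$, so all fibers along this one-parameter orbit have the same dimension; and as $t\to 0$ the point $[\mu(t)]\in\P^{h-1}$ degenerates to the coordinate point $e_{i_0}$, where $i_0$ is the index of smallest $d_i$ among those with $\lambda_i\ne 0$. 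By upper semicontinuity of fiber dimension, $\dim Z_\lambda\le\dim Z_{e_{i_0}}\le n-c$ (the latter by hypothesis on the $i_0$-th row). This is the step your induction was missing; it replaces the stratification by $\rk M'$ with a stratification by the kernel direction $\lambda$, and uses the grading to collapse each $\lambda$ to a single-row case.
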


We can now easily bound the decrease in Birch rank of the restriction of a form to a subspace.

\begin{lemma}\label{lem:brk-subspace}
    Let $\ul{f}$ be a collection of forms of degree two or higher and $U\subset K^n$ a subspace of codimension $c.$
    Then $\brk(\ul{f}\restriction_U) \ge \brk(\ul{f}) -2c.$
\end{lemma}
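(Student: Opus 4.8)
The plan is to reduce to the case of a hyperplane ($c=1$) and then iterate. Write $U$ as the intersection of $c$ hyperplanes; if I can show that restriction to a single hyperplane decreases $\brk$ by at most $2$, then after $c$ restrictions the total drop is at most $2c$, giving the claim. (One must check that the intermediate restrictions still consist of forms of degree two or higher, which is automatic since restriction does not change degree.) So fix a hyperplane $U = \{x : \ell(x) = 0\}$ for a nonzero linear form $\ell$, and let $g = a_1 f_1 + \dots + a_s f_s$ (with $0 \neq a \in \ol K^s$) be a combination achieving $\brk(g) = \brk(\ul f \restriction_U)$ after restriction — more precisely, choose $a$ so that $\brk(g\restriction_U)$ is minimal among all such combinations, so $\brk(\ul f\restriction_U) = \brk(g\restriction_U)$. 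By definition $\brk(\ul f) \le \brk(g) = \codim_{\A^n} S(g)$ where $S(g) = \{x : \nabla g(x) = 0\}$, so it suffices to prove $\brk(g\restriction_U) \ge \codim_{\A^n} S(g) - 2$.

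The key step is a singularity estimate. Identify $U$ with $\A^{n-1}$ via a linear parametrization; then $\nabla(g\restriction_U)(y) = 0$ precisely when $\nabla g$ of the corresponding point is proportional to $\nabla \ell$, i.e. when the $2 \times n$ matrix with first row $\nabla g(y)$ (a row of forms of degree $d-1 \ge 1$) and second row $\nabla \ell$ (constants) has rank $< 2$. To apply Lemma \ref{lem:mat-diff-deg} I need the two rows to have mutually distinct degrees — the row $\nabla \ell$ has degree $0$ and the row $\nabla g$ has degree $d - 1$, and these are distinct precisely because $d \ge 2$, which is exactly the hypothesis that $\ul f$ has forms of degree two or higher. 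The lemma requires a codimension bound on the common zero locus of each row: for the constant row $\nabla \ell \neq 0$ this locus is empty, so its codimension is $n \ge c$; for the row $\nabla g$, its common zero locus is exactly $S(g)$, of codimension $\brk(g)$ if $g$ is not linear (which it isn't). Taking $c = \brk(g)$ in Lemma \ref{lem:mat-diff-deg} (with $h = 2$) gives
\[
\codim_{\A^{n-1}}\bigl(y : \nabla(g\restriction_U)(y) = 0\bigr) \ge \brk(g) - 2 + 1 - 1 = \brk(g) - 2,
\]
where I have also accounted for the drop of one in ambient dimension. Since the right side is $\brk(g) - 2 = \brk(\ul f) + (\brk(g) - \brk(\ul f)) - 2 \ge \brk(\ul f) - 2$ using $\brk(g) \ge \brk(\ul f)$, and the left side is at least $\brk(g\restriction_U) = \brk(\ul f\restriction_U)$ (it could exceed it only if $g\restriction_U$ is linear, in which case one argues directly), we conclude $\brk(\ul f\restriction_U) \ge \brk(\ul f) - 2$ for a hyperplane.

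I expect the main obstacle to be bookkeeping around the definition of Birch rank as a minimum over combinations and the $\min(n, \cdot)$ clause: one has to be careful that the combination minimizing $\brk$ after restriction may differ from the one minimizing it before, but the argument above only uses $\brk(g) \ge \brk(\ul f)$ in the correct direction, so this is harmless; and one must handle the degenerate possibility that some restricted combination becomes linear or identically zero, where the codimension-versus-$n$ convention needs a separate (easy) check. A secondary subtlety is the precise indexing when passing from $\A^n$ to the hyperplane $\A^{n-1}$ — whether Lemma \ref{lem:mat-diff-deg} is applied in $\A^n$ (viewing $g\restriction_U$'s singular locus inside $\A^n$) or after parametrizing $U$; either works, but the $-1$'s must be tracked consistently so the net loss is exactly $2$ and not $1$ or $3$. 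The iteration to general codimension $c$ is then routine.
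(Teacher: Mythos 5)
Your proposal is correct and takes essentially the same route as the paper: reduce to a single form and a single hyperplane, observe that $\nabla(g\restriction_U)(y) = 0$ exactly when $\nabla g(y)$ lies in $\sp(\nabla\ell)$, and apply Lemma~\ref{lem:mat-diff-deg} to the two--row matrix with rows $\nabla g$ (degree $d-1\ge 1$) and $\nabla\ell$ (degree $0$), losing one more in codimension when passing from $\A^n$ to $U\cong\A^{n-1}$. The extra caveats you flag (the $\min(n,\cdot)$ clause, $g\restriction_U$ possibly vanishing) do not actually cause trouble for degree $\ge 2$, and the paper's proof simply omits them.
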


\begin{proof}
    It is enough to prove this in the case of a single form $f$ and $c=1.$  In this case, $U = (x\in K^n: a_1x_1+\ldots+a_n x_n = 0)$ for some $0\neq a\in K^n.$ We have
    \begin{align*}
        \brk(f\restriction_U) &= \codim_U (x\in U: \nabla f(x)\in \sp(a)) \\
        &\ge \codim_{\A^n} (x\in \A^n: (\nabla f(x),a) \textnormal{ are linearly dependent}) - 1 \ge \brk(f) -2,
    \end{align*}
    where the final inequality follows from lemma \ref{lem:mat-diff-deg}.
\end{proof}

Lemma \ref{lem:mat-diff-deg} also implies a close connection between $\brk(\ul{f})$ and the codimension of 
$$S(\ul{f}) = (x\in \A^n: (\nabla f_{i,j}(x))_{i\in [d],j\in [s_i]} \textnormal{ are linearly dependent}).$$

\begin{lemma}\label{lem:brk-sing}
    We have 
    $$\codim_{\A^n} S(\ul{f}) \ge \brk(\ul{f}) -\sum_{i=1}^d s_i-d'+2,$$
    where $d' = |\{i:s_i\neq 0\}|.$
\end{lemma}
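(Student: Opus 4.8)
The plan is to reduce the statement to Lemma~\ref{lem:mat-diff-deg} by building a suitable matrix of forms. Write $N = \sum_{i=1}^d s_i$ for the total number of forms and let $i_1 < i_2 < \cdots$ be the distinct degrees that actually occur (so there are $d'$ of them). The set $S(\ul f)$ records the locus where the gradients $\nabla f_{i,j}(x)$ are linearly dependent, i.e. where the $N \times n$ matrix $G(x)$ whose rows are these gradients has rank $< N$. The obstacle to applying Lemma~\ref{lem:mat-diff-deg} directly is that the rows of $G(x)$ do \emph{not} have mutually distinct degrees: the gradient of a degree-$i$ form has entries of degree $i-1$, and there are $s_i > 1$ such rows for each $i$. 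So the first step is to homogenize the degrees apart by multiplying rows by generic linear forms.

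Concretely, I would introduce auxiliary variables or, more simply, pass to a generic linear combination argument: for each degree $i$ appearing in $\ul f$, the $s_i$ gradients of the degree-$i$ forms span (generically on the dependence locus's complement) a space whose dependence is detected by forms of degree $i-1$. To make the degrees across the whole family mutually distinct, multiply the rows coming from degree $i$ by a fixed power of a generic linear form $\ell$ so that the new rows have degrees $i-1 + (\text{shift}_i)$, chosen so all $N$ resulting degrees are distinct; multiplying a row by the nonzero scalar $\ell(x)$ does not change its rank contribution away from the hyperplane $\ell = 0$, which has codimension $1$. Then the matrix $M(x)$ so obtained has $h = N$ rows of mutually distinct degrees, and the $i$-th row's common-zero locus has codimension at least $\brk(\ul f) - O(1)$: indeed the common zero set of the entries of $\nabla f_{i,j}$ (times $\ell^{\text{shift}}$) contains the vanishing locus of $\nabla f_{i,j}$ up to the hyperplane $\ell=0$, which by definition of Birch rank has codimension $\ge \brk(f_{i,j}) \ge \brk(\ul f)$, so after accounting for the hyperplane each row gets codimension $\ge \brk(\ul f) - 1$. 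Plugging $c = \brk(\ul f) - 1$ and $h = N$ into Lemma~\ref{lem:mat-diff-deg} yields $\codim S(\ul f) \ge \brk(\ul f) - 1 - N + 1 = \brk(\ul f) - N$, which is close but loses the $+d'$ that we want.

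To recover the sharper $\brk(\ul f) - N - d' + 2$, the right move is to group the rows by degree and apply Lemma~\ref{lem:mat-diff-deg} in a more economical way: within a single degree $i$, one does \emph{not} need to separate the $s_i$ rows by generic multipliers — one only needs to separate the $d'$ \emph{blocks} of differing degrees from one another. So instead I would argue block-by-block. For a fixed degree $i$, consider the $s_i \times n$ submatrix of degree-$(i-1)$ gradients; its rank-deficiency locus has codimension at least $\brk(f_{i,1},\dots,f_{i,s_i}) - s_i + 1 \ge \brk(\ul f) - s_i + 1$ by a direct application of Lemma~\ref{lem:mat-diff-deg} to that block (all $s_i$ rows already have the same degree $i-1$, but we can separate them using $s_i - 1$ generic linear multipliers, costing $s_i - 1$ in codimension — wait, that reintroduces the loss). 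The cleaner route: apply Lemma~\ref{lem:mat-diff-deg} once, to the $N \times n$ matrix whose rows are $\ell_{i,j}^{\,a_{i,j}} \nabla f_{i,j}$ with exponents $a_{i,j}$ chosen so that $i - 1 + a_{i,j}$ are all distinct, and where the $\ell_{i,j}$ are \emph{all equal} within a degree block and only the across-block multipliers are genuinely needed — then carefully track that the codimension cost of the generic hyperplanes is $d' - 1$ total rather than $N - 1$, because the rows within one degree block can reuse the same linear form. I expect the main obstacle to be exactly this bookkeeping: showing that only $d' - 1$ generic linear forms (one per degree block beyond the first), not $N - 1$, are needed to separate the degrees, so that the penalty in Lemma~\ref{lem:mat-diff-deg} is $N - 1$ from the $-h + 1$ term plus $d' - 1$ from the hyperplanes, giving $\codim S(\ul f) \ge (\brk(\ul f) - (d'-1)) - (N - 1) = \brk(\ul f) - N - d' + 2$, as desired.

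Alternatively, and perhaps more robustly, I would induct on $d'$: if only one degree occurs, apply Lemma~\ref{lem:mat-diff-deg} directly to the $s_i$ equal-degree gradients (after separating them with $s_i - 1$ generic multipliers — but here the bound we're proving reads $\brk(\ul f) - s_i - 1 + 2 = \brk(\ul f) - s_i + 1$, which is precisely the output of one application of Lemma~\ref{lem:mat-diff-deg} with $h = s_i$ and $c = \brk(\ul f)$, \emph{no} hyperplane needed because a single generic multiplier trick can be absorbed — so the base case is clean). For the inductive step, peel off the highest-degree block, separate it from the rest with one generic hyperplane, and combine the codimension estimates. Whichever packaging one chooses, the mathematical content is a single invocation of Ananyan--Hochster's Lemma~\ref{lem:mat-diff-deg} together with the observation that a Birch-rank hypothesis on $\ul f$ bounds the codimension of the common zero locus of each individual gradient's entries, and the only delicate point is the accounting of the constants $\sum s_i$ and $d'$.
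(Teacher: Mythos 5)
Your plan runs into a genuine obstruction at its central step: multiplying a row $\nabla f_{i,j}$ by a power $\ell^{a_{i,j}}$ of a generic linear form does not preserve the hypothesis of Lemma~\ref{lem:mat-diff-deg}. The hypothesis requires that the common zero locus of the entries of each row have codimension at least $c$, but the common zero locus of $\ell^{a}\nabla f_{i,j}$ (with $a\ge 1$) \emph{contains} the hyperplane $\{\ell = 0\}$, so it has codimension exactly $1$. You cannot take $c = \brk(\ul f) - 1$ in this situation; you are forced to take $c = 1$, which makes the conclusion vacuous. Restricting to the open set $\{\ell\neq 0\}$ before applying the lemma leaves $S(\ul f)\cap\{\ell=0\}$ uncontrolled and does not save the argument. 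You sense the problem yourself --- each of your three variants ends with a hand-wave at exactly this point --- but none of them closes the gap. (Incidentally, even if the first variant worked it would prove $\codim S \ge \brk(\ul f) - N$, which for $d'\ge 2$ is \emph{stronger} than the target $\brk(\ul f)-N-d'+2$, not weaker; the bookkeeping in the proposal is confused on this point.)

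The paper's proof sidesteps the distinct-degree difficulty by a different decomposition. Write $s=\sum s_i$ and $S = \bigcup_{a\in\P^{s-1}} S_a$, where $S_a$ is the locus where the fixed linear combination $\sum_{i,j} a_{i,j}\nabla f_{i,j}$ vanishes. For a fixed $a$, regroup the sum by degree: setting $f_i = \sum_j a_{i,j} f_{i,j}$ and $I = \{i : (a_{i,j})_j \ne 0\}$, the dependence becomes $\sum_{i\in I}\nabla f_i = 0$, so $S_a$ is contained in the rank-deficiency locus of a matrix with $|I| \le d'$ rows of \emph{mutually distinct} degrees. Each $f_i$ is a nontrivial linear combination of the degree-$i$ forms, so the zero locus of $\nabla f_i$ has codimension $\ge\brk(\ul f)$, and Lemma~\ref{lem:mat-diff-deg} applies directly with $h = |I|$ and $c = \brk(\ul f)$, giving $\codim S_a \ge \brk(\ul f) - d' + 1$. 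Taking the union over the $(s-1)$-dimensional family $\P^{s-1}$ costs $s-1$ in codimension and yields $\codim S \ge \brk(\ul f) - s - d' + 2$. The crucial insight you are missing is that a single linear dependence among \emph{all} the gradients can be repackaged, degree by degree, into a dependence among $d'$ gradients of distinct degrees --- so no artificial degree-separation device is needed at all.
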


\begin{proof}
Let $s = \sum_{i=1}^d s_i.$ Write $S = \cup_{a\in \P^{s-1}} S_a,$ where 
\[
S_a =  \left(x: \sum_{i=1}^d\sum_{j=1}^{s_i} a_{i,j} \nabla f_{i,j}(x) = 0 \right).
\]
It's enough to show that for any fixed $a\in \P^{s-1}$ we have $\codim_{\A^n} S_a \ge \brk(\ul{f})-d'+1.$ Fixing $a,$ let $I = \{i\in [d]: (a_{i,j})_{j\in [s_i]} \neq (0,\ldots,0)\}.$ For each $i\in I,$ let $f_i = \sum_{j\in [s_i]} a_i f_{i,j}.$ We now have the inclusion 
\[
S_a \subset (x\in \A^n: (\nabla f_i(x))_{i\in I} \textnormal{ are linearly dependent}).
\]

Applying lemma \ref{lem:mat-diff-deg} yields
\begin{align*}
    \codim_{A^n} S_a &\ge \codim_{\A^n} (x\in \A^n: (\nabla f_i(x))_{i\in I} \textnormal{ are linearly dependent}) \\
    &\ge \brk(\ul{f}) - |I|+1 \ge \brk(\ul{f}) - d'+1. 
\end{align*}  
\end{proof}

There is an important criterion relating singularities to irreducibility.

\begin{lemma}\label{lem:R1-seq}
   Let $f_1,\ldots,f_s$ be a sequence of forms of positive (possibly different) degrees such that $ Z = Z(f_1,\ldots,f_s)$ is a complete intersection of codimension $s.$ If 
   \[
   \codim_Z (x\in Z: (\nabla f_1(x),\ldots,\nabla f_s(x) \textnormal{ are linearly dependent}) >1, 
   \]
   then the ideal $(f_1,\ldots,f_s)$ is prime. In particular, $Z(f_1,\ldots,f_s)$ is irreducible. 
\end{lemma}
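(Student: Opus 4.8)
The plan is to verify the standard algebraic criteria for primeness of an ideal generated by a regular sequence, namely Serre's conditions, using the codimension hypothesis on the singular locus.

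\medskip

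\noindent\textbf{Approach.} Let $R = K[x_1,\ldots,x_n]$ and $A = R/(f_1,\ldots,f_s)$. Since $Z = Z(f_1,\ldots,f_s)$ has codimension $s$ and the $f_i$ have positive degree, the sequence $f_1,\ldots,f_s$ is a regular sequence in the Cohen--Macaulay ring $R$; hence $A$ is Cohen--Macaulay, and in particular satisfies Serre's condition $(S_2)$ (indeed $(S_k)$ for all $k$). By Serre's criterion, $A$ is a normal domain as soon as it also satisfies $(R_1)$, i.e. $A_{\mathfrak p}$ is regular for every prime $\mathfrak p$ of height $\le 1$. Once we know $A$ is a domain, $Z$ is irreducible; and a reduced Cohen--Macaulay ring that is a domain has its defining ideal prime by construction, so this gives everything claimed. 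So the whole proof reduces to checking $(R_1)$.

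\medskip

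\noindent\textbf{Checking $(R_1)$.} The non-regular (equivalently, non-smooth, since we are in characteristic zero or $>d$, though actually we only need that the singular locus is cut out by the Jacobian) locus of $\mathrm{Spec}(A)$ is, by the Jacobian criterion for complete intersections, precisely the set of points of $Z$ where the Jacobian matrix $(\nabla f_1,\ldots,\nabla f_s)$ drops rank below $s$ — that is, where $\nabla f_1(x),\ldots,\nabla f_s(x)$ are linearly dependent. (Here one uses that $Z$ is a complete intersection of the expected codimension $s$, so the Jacobian has full rank $s$ exactly at the smooth points.) By hypothesis this locus has codimension $>1$ inside $Z$, hence it contains no point of height $\le 1$ in $A$. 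Therefore $A_{\mathfrak p}$ is regular for all $\mathfrak p$ of height $\le 1$, which is $(R_1)$.

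\medskip

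\noindent\textbf{Conclusion and the main obstacle.} Combining $(R_1)$ and $(S_2)$, Serre's criterion gives that $A$ is a normal domain; in particular $(f_1,\ldots,f_s)$ is a prime ideal and $Z(f_1,\ldots,f_s)$ is irreducible. The one point that needs a little care — and which I expect to be the main obstacle to state cleanly — is the passage from "the Jacobian drops rank" to "the local ring is singular": this requires knowing that $(f_1,\ldots,f_s)$ generates the full ideal sheaf of $Z$ locally, which follows from $Z$ being a complete intersection of codimension exactly $s$, together with the fact that a complete intersection ideal in a regular ring is unmixed, so it equals its own radical's primary part at height-$s$ primes; and for the generic smoothness/Jacobian statement one wants $\ch K = 0$ or $\ch K > d$ so that the partial derivatives genuinely detect singularities. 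All of these are standard facts about complete intersections, so modulo citing them the argument is short.
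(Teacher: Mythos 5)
Your argument is essentially the paper's (both rely on Serre's criterion, $(R_1)+(S_2)\Rightarrow$ normal, with $(S_2)$ coming from Cohen--Macaulayness of a complete intersection and $(R_1)$ from the Jacobian criterion applied to the codimension hypothesis). However, there is a genuine gap in the final step. Serre's criterion for a general Noetherian ring does \emph{not} give that $A = K[x_1,\ldots,x_n]/(f_1,\ldots,f_s)$ is a normal \emph{domain}; it gives that $A$ is \emph{normal}, meaning a finite direct product of normal domains (equivalently, $\mathrm{Spec}(A)$ is a disjoint union of integral normal schemes). To conclude that $A$ is actually a domain one must additionally know that $\mathrm{Spec}(A)$ is connected, and that does not follow from $(R_1)+(S_2)$ alone. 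The paper supplies exactly this missing ingredient: since the $f_i$ are homogeneous, $Z(f_1,\ldots,f_s)$ is a cone through the origin, hence connected, so a product of domains must be a single domain. Without some such observation (connectedness of the cone, or an equivalent statement, e.g.\ that a projective complete intersection of positive dimension is connected) your proof does not rule out the possibility that $A$ is a nontrivial product and $(f_1,\ldots,f_s)$ is merely radical and unmixed but not prime. Everything else in your write-up — the reduction to $(R_1)$, the identification of the non-regular locus with the locus of linearly dependent gradients via the Jacobian criterion for complete intersections, and the Cohen--Macaulay $\Rightarrow (S_2)$ step — is correct and matches the paper's citation of Eisenbud's Proposition 18.13 and Theorem 18.15.
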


\begin{proof}
    This is a well known fact that follows from \cite[Theorem 18.15b]{Eisenbud}. Note that the hypothesis of that theorem is satisfied by \cite[Proposition 18.13]{Eisenbud} and that $Z(f_1,\ldots,f_s)$ is connected since it's a cone. Therefore, if its coordinate ring is a product of domains then it must in fact be a domain.
\end{proof}

As a consequence of the results of this section, we have:

\begin{lemma}\label{lem:brk-irr}
    If $\brk(\ul{f}) > 2\sum_{i=1}^d s_i \multiset{m}{i}+d'-1$ and $\ul{g} \subset T_m(\ul{f})$ consists of $t$ equations then $Z(\ul{g})$ is an irreducible complete intersection of codimension $t.$
\end{lemma}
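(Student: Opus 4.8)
The plan is to reduce the statement to an application of Lemma~\ref{lem:R1-seq}, using Lemma~\ref{lem:brk-taylor} to transfer the Birch rank hypothesis from $\ul f$ to $T_m(\ul f)$ and Lemma~\ref{lem:brk-sing} to turn that rank bound into a codimension bound on the singular locus. First I would record that $T_m(\ul f)$ is a system of at most $\sum_{i=1}^d s_i \multiset{m}{i}$ forms, of $d'$ distinct degrees $1,\ldots,d$ (only the degrees $i$ with $s_i\neq 0$ appear). Let $\ul g\subset T_m(\ul f)$ be any subcollection of $t$ of these forms; it is then a system of $t$ forms of at most $d'$ distinct positive degrees. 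By Lemma~\ref{lem:brk-taylor}, $\brk(\ul g)\ge \brk(T_m(\ul f))\ge \brk(\ul f) > 2\sum_{i=1}^d s_i\multiset{m}{i}+d'-1 \ge 2t + d' - 1 > t$ (using $t\le \sum_i s_i\multiset{m}{i}$ and $d'\ge 1$). Wait---more carefully, I only need a lower bound for the codimension of the zero set and of the singular locus, so I should be a bit careful about which estimate is used where.

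Here is the structure I would follow. \emph{Step 1 (complete intersection).} Since $\brk(\ul g)$ bounds from below the codimension of the common zero locus of the gradients, and in particular (by taking a single form's gradient into account via the Birch rank being $\le n$, or directly) each form in $\ul g$ is nonzero and more generally no form in the span degenerates too badly; the standard consequence is that $Z(\ul g)$ has codimension exactly $t$, i.e.\ is a complete intersection. I would justify this by noting $\brk(\ul g) > t$ forces, for every partial subsystem, the zero locus to drop in dimension by one at each step --- concretely, if $Z(g_1,\ldots,g_j)$ had a component on which $g_{j+1}$ vanished identically, one produces a linear combination with small Birch rank, contradicting $\brk(\ul g)> t \ge j+1$. \emph{Step 2 (singular locus has high codimension).} Apply Lemma~\ref{lem:brk-sing} to the system $\ul g$: its ``$\sum s_i$'' is $t$ and its ``$d'$'' is at most $d'$, so
\[
\codim_{\A^n} S(\ul g) \ge \brk(\ul g) - t - d' + 2 \ge \brk(\ul f) - t - d' + 2.
\]
Using $t\le \sum_{i=1}^d s_i\multiset{m}{i}$ and the hypothesis $\brk(\ul f) > 2\sum_{i=1}^d s_i\multiset{m}{i} + d' - 1$, this gives $\codim_{\A^n}S(\ul g) \ge 2\sum_i s_i\multiset{m}{i} + d' - t + 2 \ge \sum_i s_i\multiset{m}{i} + 2 \ge t+2$. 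Since $S(\ul g)\cap Z(\ul g)$ has codimension at least $\codim_{\A^n}S(\ul g)$ in $\A^n$, hence at least $\codim_{\A^n}S(\ul g) - t \ge 2$ in $Z(\ul g)$, the hypothesis of Lemma~\ref{lem:R1-seq} is met. \emph{Step 3 (conclude).} Lemma~\ref{lem:R1-seq} then yields that $(\ul g)$ is prime, so $Z(\ul g)$ is irreducible; combined with Step~1 it is an irreducible complete intersection of codimension $t$.

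The main obstacle I anticipate is Step~1 --- cleanly arguing that $Z(\ul g)$ is a complete intersection of the full codimension $t$. The Birch rank hypothesis is about gradients being linearly independent off a small set, not directly about the zero locus, so I need to convert this. The cleanest route is: for each $j$, if $Z(g_1,\ldots,g_j)$ failed to be a complete intersection of codimension $j$, there is an irreducible component $Y$ of codimension $\le j-1$ wait that is not quite it either; rather I would argue inductively that at each step the new equation cannot vanish on a whole component, because $\brk$ of the relevant single form $g_{j+1}$ restricted appropriately stays positive --- indeed $\brk(g_{j+1}) > t \ge 1$ means $g_{j+1}$ is not a zero divisor modulo any prime of small codimension, since its vanishing locus together with a singular correction has codimension $>1$. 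Actually the simplest formulation: apply Lemma~\ref{lem:brk-sing} to each truncation $(g_1,\ldots,g_j)$ to get that its singular locus has codimension $> j$, then invoke \cite[Proposition 18.13]{Eisenbud} or a direct Jacobian-criterion/dimension count to see it is a complete intersection; this is essentially bundled into the proof of Lemma~\ref{lem:R1-seq} anyway. I would double-check whether the paper intends Step~1 to be folded into the cited Eisenbud results (it very likely does), in which case the whole proof collapses to: verify the numerical hypotheses of Lemma~\ref{lem:R1-seq} via Lemmas~\ref{lem:brk-taylor} and~\ref{lem:brk-sing}, then quote it.
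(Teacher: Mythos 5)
Your proposal is correct and follows essentially the same route as the paper: apply Lemma~\ref{lem:brk-taylor} to carry the Birch rank bound to $T_m(\ul f)$, use Lemma~\ref{lem:brk-sing} to get $\codim S(\ul g) > t+1$, deduce the complete intersection property from this codimension bound, and then invoke Lemma~\ref{lem:R1-seq}. Your worry about Step~1 is resolved exactly as you suspect: the paper also treats the complete intersection claim tersely, as an immediate consequence of $\codim S(\ul g) > t$ (any component of codimension $<t$ would be forced into the singular locus), so there is no gap.
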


\begin{proof}
    It follows from lemmas \ref{lem:brk-taylor} and \ref{lem:brk-sing} that
    \begin{equation}\label{eq:Sing-g}
        \codim_{V^m} S(\ul{g}) \ge \brk(\ul{f})-\sum_{i=1}^d s_i \multiset{m}{i}-d'+2 > \sum_{i=1}^d s_i \multiset{m}{i}+1.
    \end{equation}
    The right hand side is larger than $t,$ so in particular the zero locus $Z = Z(\ul{g})$ is a complete intersection of codimension $t.$ Inequality \eqref{eq:Sing-g} also implies that we have $\codim_Z Z\cap S(\ul{g}) > 1,$ so $Z$ is irreducible by lemma \ref{lem:R1-seq}.
\end{proof}

\section{Proof of proposition \ref{prop:diag}}

The proof will rely on the following geometric result.

\begin{lemma}\label{lem:diag-on-subspace-geo}
    Let $m>1$ and suppose  $\brk(\ul{f}) > 2\sum_{i=1}^d s_i m^i+c.$ Let $h$ be a polynomial which doesn't vanish identically on $D_m.$ Then for a dense open set of $\ul{x}\in D_{m-1},$  
    \begin{enumerate}
        \item The polynomial $h(\ul{x},\cdot)$ doesn't vanish identically on the fiber $D_{m}(\ul{x})$ and 
        \item We have $\brk \left( (f_{i,j}^e(\ul{x},\cdot))_{e_m>0} \right) > c.$ 
    \end{enumerate}
\end{lemma}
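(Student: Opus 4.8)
The starting observation is that both $D_m$ and $D_{m-1}$ are zero loci of subcollections of the Taylor expansions: $D_m=Z(\ul h)$, where $\ul h\subseteq T_m(\ul f)$ consists of all of $T_m(\ul g)$ together with those terms $f^e$ ($|e|=d$) whose support meets at least two of the blocks $\{1,2,3\},\{4\},\dots,\{m\}$ (this collection of vanishing conditions is exactly $\sp(w_1,w_2,w_3),v_1,\dots,v_{m-3}$ being $f$-orthogonal), and likewise $D_{m-1}=Z(\ul h')$ with $\ul h'\subseteq T_{m-1}(\ul f)$. Since $\multiset{m}{i}\le m^i$, the hypothesis $\brk(\ul f)>2\sum_i s_im^i+c$ comfortably exceeds the threshold of lemma~\ref{lem:brk-irr}, so $D_m$ and $D_{m-1}$ are irreducible complete intersections, of codimension $|\ul h|\le\sum_i s_im^i$ and $|\ul h'|\le\sum_i s_im^i$ respectively; moreover the terms of $\ul h$ with $e_m>0$ cut $D_m$ out of $D_{m-1}\times V$ as a complete intersection of the expected codimension. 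The projection $\pi\colon D_m\to D_{m-1}$ is \emph{surjective}, because the zero section $\ul x\mapsto(\ul x,0)$ sends $D_{m-1}$ into $D_m$ (all Taylor terms with $e_m>0$ vanish when the last slot is $0$, and $\{0\}$ is vacuously $f$-orthogonal to anything). Hence $\pi$ is dominant with generic fibres equidimensional of dimension $\dim D_m-\dim D_{m-1}$.

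\textbf{Part (1).} This is then routine. As $D_m$ is irreducible and $h$ is not identically zero on it, $Z'\coloneqq\{h=0\}\cap D_m$ is a proper closed subvariety, so $\dim Z'<\dim D_m$. Over a dense open $U\subseteq D_{m-1}$ all fibres $\pi^{-1}(\ul x)$ have dimension exactly $\dim D_m-\dim D_{m-1}$, whereas the fibres of $Z'\to D_{m-1}$ over a dense open set have dimension at most $\dim Z'-\dim\ol{\pi(Z')}$, which is strictly smaller when $Z'$ dominates $D_{m-1}$ (and the fibres are empty over a dense open when it does not). Either way, for $\ul x$ in a dense open subset of $D_{m-1}$ we have $\pi^{-1}(\ul x)\not\subseteq Z'$, i.e.\ $h(\ul x,\cdot)$ does not vanish identically on $D_m(\ul x)$; in particular $D_m(\ul x)$ is nonempty there.

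\textbf{Part (2).} This is the heart of the matter. Fix a degree $\delta\in\{1,\dots,d\}$; the forms of $y$-degree $\delta$ among the $f_{i,j}^e(\ul x,\cdot)$ (those with $e_m=\delta$, coming from $f_{i,j}$ with $i\ge\delta$) span a linear system parametrised by $[a]\in\P^{N_\delta-1}$ with $N_\delta\le\sum_i s_im^i$, and we write $F_{a,\ul x}$ for the corresponding degree-$\delta$ form in $y$. The bad locus $\mathcal B_\delta=\{(\ul x,[a])\in D_{m-1}\times\P^{N_\delta-1}:\brk(F_{a,\ul x})\le c\}$ is closed, by upper semicontinuity of fibre dimension applied to the incidence variety $\{(\ul x,[a],y):\nabla_yF_{a,\ul x}(y)=0\}$; and since $\P^{N_\delta-1}$ is proper the projection $D_{m-1}\times\P^{N_\delta-1}\to D_{m-1}$ is closed, so the image of $\mathcal B_\delta$ in $D_{m-1}$ is closed. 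It therefore suffices to show this image is a \emph{proper} subvariety of $D_{m-1}$; intersecting the complements over the finitely many values of $\delta$ then yields a dense open set of $\ul x$ satisfying both conclusions. Suppose a component of $\mathcal B_\delta$ dominated $D_{m-1}$; the total incidence variety over it then has dimension at least $\dim D_{m-1}+(n-c)$. I bound it from above via the projection to $D_{m-1}\times V$: the fibre over $(\ul x,y)$ is a linear subvariety of $\P^{N_\delta-1}$ of dimension $N_\delta-1-\rk M(\ul x,y)$, where $M(\ul x,y)$ is the $N_\delta\times n$ matrix with rows $\nabla_yf_{i,j}^e(\ul x,y)$. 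Stratifying by the rank of $M$ and using that, for each $k$, $\{(\ul x,y)\in D_{m-1}\times V:\rk M(\ul x,y)\le N_\delta-1-k\}$ has codimension $>c+k$ in $D_{m-1}\times V$, one gets the contradiction $\brk(\ul f)\le 2\sum_i s_im^i+c$. These codimension estimates are where the two factors of $\sum_i s_im^i$ enter: applying lemma~\ref{lem:brk-taylor} and lemma~\ref{lem:brk-sing} (together with lemma~\ref{lem:mat-diff-deg} on suitable submatrices with rows of distinct degrees) to $T_m(\ul f)$ shows that the rank-dropping loci of $M$ on all of $V^m$ have codimension at least $\brk(\ul f)$ minus $|T_m(\ul f)|\le\sum_i s_im^i$ (this is the ``$-\sum s_i$'' in lemma~\ref{lem:brk-sing} and the passage from the form to the collection of its gradients, as in lemma~\ref{lem:brk-subspace}), and restricting from $V^m$ to $D_{m-1}\times V$ costs at most $\codim(D_{m-1}\times V)\le\sum_i s_im^i$ more.

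\textbf{Main obstacle.} The dimension count in part (2) is the delicate point: one must account precisely for the two contributions of $\sum_i s_im^i$ so as to land exactly at the hypothesis $\brk(\ul f)>2\sum_i s_im^i+c$ rather than a weaker constant, which forces a careful choice of submatrices of distinct degree when invoking lemma~\ref{lem:mat-diff-deg}, a careful treatment of the mixing of source degrees $i\ge\delta$ within a fixed $y$-degree $\delta$, and attention to the bad (and possibly non-reduced) behaviour of $\mathcal B_\delta$ near the origin of $D_{m-1}$, where the forms $f_{i,j}^e(\ul x,\cdot)$ degenerate.
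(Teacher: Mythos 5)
Part (1) is correct and matches the paper's argument (irreducibility of $D_m$ from lemma~\ref{lem:brk-irr}, then a fiber dimension count for the proper closed subset $\{h=0\}\cap D_m$).

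For part (2), your high-level plan is reasonable, but there is a genuine gap in the codimension estimate you need, and I do not think it can be closed by the lemmas you cite in the way you describe. You want to bound $\codim\{(\ul x,y): \rk M(\ul x,y) < N_\delta\}$, where $M$ has rows $\nabla_y f_{i,j}^e(\ul x,y)$ with $e_m=\delta$; these are only the \emph{$x_m$-components} of the gradients $\nabla f_{i,j}^e$. You propose to get this from applying lemma~\ref{lem:brk-taylor} and lemma~\ref{lem:brk-sing} to $T_m(\ul f)$. But those lemmas control the locus where the \emph{full} gradients $\nabla_{(x_1,\dots,x_m)} f_{i,j}^e$ are linearly dependent, which is a subset of the locus where the $\partial_{x_m}$-components are linearly dependent, so the inequality runs the wrong way: a bound on $\codim S(T_m(\ul f))$ gives no control on $\codim\{\rk M<N_\delta\}$. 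Similarly, lemma~\ref{lem:mat-diff-deg} cannot be applied directly to $M$ because within a fixed $y$-degree $\delta$ many rows share the same total degree $i-1$ (all the $e$ with $|e|=i$, $e_m=\delta$, and all $j\in[s_i]$), so "suitable submatrices with rows of distinct degrees'' does not cover them. The missing ingredient is the diagonal-restriction argument used \emph{inside} the proof of lemma~\ref{lem:brk-taylor} (via lemma~\ref{lem:Taylor-identities}(4)): for a fixed nontrivial linear combination $g=\sum_{e_m=\delta} a_{i,j}^e f_{i,j}^e$, restricting $\partial g/\partial x_m$ to a diagonal $x\mapsto(\lambda_1x,\dots,\lambda_m x)$ expresses it as a nontrivial linear combination of the original $\nabla f_{i,j}(x)$, whence $\codim_{V^m}\{\partial g/\partial x_m=0\}\ge\codim_V S(\ul f)$, and only then does lemma~\ref{lem:brk-sing} (applied to $\ul f$, not $T_m(\ul f)$) give the bound. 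The paper then avoids any rank stratification of $M$ by bounding $\codim S_a$ directly for each $[a]$ and taking a union over $[a]\in\P^{\sigma-1}$ with $\sigma\le\sum_i s_i\multiset{m}{i}$; this is simpler than your $D_{m-1}\times V$ projection and still lands exactly at the threshold $2\sum_i s_im^i+c$. You could repair your argument by replacing the rank stratification with this fix-the-combination-then-union step, and by performing the diagonal restriction on $\partial g/\partial x_m$ rather than invoking lemma~\ref{lem:brk-taylor} as a black box.
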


\begin{proof}
    Note that 
    \[
    2\sum_{i=1}^d s_i m^i \ge 2\sum_{i=1}^d s_i\multiset{m}{i} + d'-1,
    \]
    so by lemma \ref{lem:brk-irr} both $D_{m-1}$ and $D_m$ are irreducible. It's therefore enough to show that each of the conditions holds on a nonempty open subset of $D_{m-1}.$
    
    For the first condition, irreducibility of $D_m$ implies that the proper subset $D' = (h = 0)\cap D_m$ has dimension $\dim D' < \dim D_m.$ Therefore, for a dense open subset of $\ul{x}\in D_{m-1}$ the fiber satisfies $\dim D'(\ul{x}) <\dim D_m(\ul{x})$ as desired. 
    
    For the second condition, again by irreducibility it's enough to show this for each fixed $e_m = k>0.$ Let 
    \[
    S = (\ul{x}\in V^{m-1}: \brk \left((f_{i,j}^e(\ul{x},\cdot))_{e_m = k} \right) \le c)
    \]
    It's certainly enough to prove that
    \begin{equation}\label{eq:codim-S}
        \codim_{V^{m-1}} S > \sum_{i=1}^d s_i \multiset{m}{i} > \codim_{V^{m-1}} D_{m-1}.
    \end{equation}
    
    To prove this, consider some nontrivial linear combination $g = \sum_{e_m = k} a_{i,j}^e f_{i,j}^e(x_1,\ldots,x_m)$ and let
    $$S_a = (\ul{x}\in V^{m-1}: \brk(g(\ul{x},\cdot) \le c))\ ,\ Y_a = \left( \ul{x}\in V^m : \frac{\partial g}{\partial x_m} (\ul{x}) = 0 \right).$$
    For $ 0 \neq \lambda\in K^m,$ we have
    \[
    \codim_{V^m} Y_a \ge \codim_V (x\in V:(\lambda_1x,\ldots,\lambda_m x) \in Y_a). 
    \]
    By lemma \ref{lem:Taylor-identities}, the expression on the right hand side is
    \[
    \codim_V \left( x\in V:  \sum_{i\ge k,j\in [s_i]} \nabla f_{i,j}(x) \sum_{e_m = k} a_{i,j}^e \cdot \binom{i}{e_1,\ldots,e_{m-1},k-1} \cdot \lambda_1^{e_1}\ldots \lambda_{m-1}^{e_{m-1}} \lambda_m^{k-1}  = 0 \right).
    \]
    Choosing $\lambda$ such that the coefficients of  $\nabla f_{i,j}(x)$ above are not all zero and applying lemma \ref{lem:brk-sing}, we obtain  
    \[
    \codim_{V^m} Y_a \ge \codim_V S(\ul{f}) > \brk(\ul{f}) -\sum_{i=1}^d s_i-d' \ge 2\sum_{i=1}^d s_i\multiset{m}{i} +c .
    \]
    This gives  
    \[
    \codim_{V^{m-1}} S_a = \codim_{V^{m-1}} (\ul{x}: \codim_V Y_a(\ul{x}) \le c) > 2\sum_{i=1}^d s_i \multiset{m}{i}.
    \]
    Taking the union over $a\in \P^{\sigma-1}$ for
    $\sigma = \sum_{i=k}^d s_i \multiset{m}{i-k}$ yields 
    \[
    \codim_{V^{m-1}} S > \min_{a\in \P^{\sigma-1}} \codim_{V^{m-1}} S_a - \sigma >  \sum_{i=1}^d s_i \multiset{m}{i},  
    \]
    proving inequality \eqref{eq:codim-S}.
\end{proof}

We can now prove the proposition.

\begin{proof}[Proof of proposition  \ref{prop:diag}]
    Recall that our goal is to prove that
     \[
    \delta_m(s_d,\ldots,s_2,s_1) \le 2\sum_{i=1}^d s_im^i+B(s^*_d,\ldots,s^*_1),
    \]
    where $s^*_d = s_d-1$ and $s^*_i = s_i+ms_{i+1}+\ldots+s_dm^{d-i}$ for $1\le i\le d-1.$ The proof is by induction on $m.$ In the base case $m=1$ we have $D_1 = Z(\ul{g})$ so by lemma \ref{lem:monotone+linears}, 
    \[
    \delta_1(s_d,\ldots,s_1) = B_d(s_d-1,s_{d-1},\ldots,s_1) \le B_d(s_d^*,\ldots,s_1^*).
    \] 
    
    Suppose now that the claim holds for $m-1$ and we wish to prove it for $m.$ We assume that $\brk(\ul{f}) > 2\sum_{i=2}^d s_im^i+B(s^*_d,\ldots,s^*_1)$ and that $h$ is a polynomial not vanishing identically on $D_m,$ and need to show that $(h\neq 0)\cap D_m$ contains a $K$-point. Let $U\subset D_{m-1}$ be the dense open subset guaranteed by lemma \ref{lem:diag-on-subspace-geo}, with $c = B(s^*_d,\ldots,s^*_1).$ 
     
    By the inductive hypothesis, we may choose a $K$-point $\ul{x}\in U.$ The fiber $D_m(\ul{x})$ is then defined by at most $s_i^*$-equations of degree $i,$ and these equations are a subset of $(f_{i,j}^e(\ul{x},\cdot))_{e_m>0}$ so their Birch rank is greater than $B(s^*_d,\ldots,s^*_1).$ It follows that we may choose a $K$-point $y\in D_m(\ul{x}) \cap (h(\ul{x},\cdot)\neq 0),$ as desired.  
\end{proof}

    \section{Proof of proposition \ref{prop:beta-bound}}

    The following result \cite[Proposition 4.4]{BDS} will play a key role in proving propositions \ref{prop:beta-bound} and \ref{prop:diag-to-good}. 

    \begin{lemma}\label{lem:good-subspace}
        Suppose that
        \[
        f(x,y,z,t_1,\ldots,t_m) = \alpha xy^{d-1}+\beta y^d+\gamma z^d+g(t_1,\ldots,t_m) 
        \]
        where $\alpha,\beta,\gamma\in K,$ $\alpha\gamma \neq 0$ and $g$ is some form. Then the $K$-points of $Z(f)$ are Zariski dense.   
    \end{lemma}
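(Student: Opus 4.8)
The plan is to exhibit an explicit rational curve inside $Z(f)$, or more precisely a rational parametrization of a positive-dimensional subvariety on which $f$ vanishes, and then observe that its $K$-points are Zariski dense in $Z(f)$ because $Z(f)$ is a hypersurface and the parametrized locus is not contained in any proper subvariety. The key observation is that the term $\alpha x y^{d-1}$ is linear in $x$; so for \emph{any} choice of $y \ne 0$, $z$, and $t_1,\dots,t_m$, we can solve $f = 0$ uniquely for $x$, namely
\[
x = -\frac{\beta y^d + \gamma z^d + g(t_1,\dots,t_m)}{\alpha y^{d-1}}.
\]
This already gives a rational map $\A^{m+2} \dashrightarrow Z(f)$, $(y,z,t_1,\dots,t_m) \mapsto (x,y,z,t_1,\dots,t_m)$ with the above $x$, defined on the open set $y \ne 0$; it is a section of the projection $Z(f) \to \A^{m+2}$ forgetting $x$, hence its image is dense in $Z(f)$ (the projection is dominant since $Z(f)$ is a hypersurface not containing the hyperplane $y=0$, using $\gamma \ne 0$ so that $Z(f)$ is not $\{y = 0\}$, e.g. the image is already $(m+2)$-dimensional).

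The remaining point is that the image consists of $K$-points over a dense set of $K$-parameters: indeed, for every $(y,z,\ul{t}) \in K^{m+2}$ with $y \ne 0$, the formula produces a genuine $K$-point of $Z(f)$, and the set of such parameters is Zariski dense in $\A^{m+2}$ (it is the complement of a hyperplane), so its image is Zariski dense in the image of the rational map, which in turn is dense in $Z(f)$. Concretely: given any nonzero polynomial $h$ on the ambient space not vanishing identically on $Z(f)$, its pullback under the parametrization is a nonzero rational function on $\A^{m+2}$, so it is nonzero at some $K$-point with $y \ne 0$ (as $K$ is infinite), producing a $K$-point of $Z(f)$ off $\{h = 0\}$. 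Note that this argument does not even use $\alpha\gamma \ne 0$ beyond $\alpha \ne 0$ (needed to divide) — the hypothesis $\gamma \ne 0$ is presumably there to guarantee $Z(f)$ is genuinely a hypersurface of the expected dimension and is likely used in the intended application.

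I expect there to be essentially no obstacle here; the only mild subtlety is being careful that ``Zariski dense $K$-points'' is the right conclusion — one must note $K$ is infinite (which holds since $\phi_2 < \infty$ forces $K$ infinite, or it is assumed in the density portion of the paper) so that the complement of a hyperplane in $K^{m+2}$ is nonempty, indeed Zariski dense. If one prefers to match the phrasing used elsewhere in the paper, one can instead say: $Z(f)$ is irreducible (it is a hypersurface defined by an irreducible polynomial, since $f$ is linear and nonconstant in $x$), of dimension $m+2$, and the constructed family of $K$-points is not contained in any proper closed subset, hence is dense. The main thing to get right is simply the explicit solve-for-$x$ step and the verification that the projection forgetting $x$ is dominant, both of which are immediate from $\alpha \ne 0$.
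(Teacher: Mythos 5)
The paper does not give its own proof of this lemma; it is quoted verbatim as \cite[Proposition 4.4]{BDS}, so there is nothing internal to compare against. On its merits, your solve-for-$x$ argument is the right idea and is essentially the one used in \cite{BDS}: since $f = \alpha y^{d-1}\,x + q$ with $q = \beta y^d + \gamma z^d + g(\ul t)$, the map $\sigma(y,z,\ul t) = (-q/(\alpha y^{d-1}),\,y,z,\ul t)$ is a section of the projection over $\{y\neq 0\}$ defined over $K$, its image has dimension $m+2$, and $K$-points of $\{y\neq 0\}\subset\A^{m+2}$ are dense because $K$ is infinite (assumed throughout the density discussion), so $\sigma$ of those $K$-points is dense in $Z(f)$ \emph{provided} $Z(f)$ is irreducible.

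That last point is where your write-up is loose. You justify irreducibility by ``$f$ is linear and nonconstant in $x$,'' but that alone is false (e.g.\ $yx+y$). What actually makes $f$ irreducible is exactly the hypothesis $\gamma\neq 0$: the coefficient of $x$ is $\alpha y^{d-1}$, whose only irreducible factor is $y$, and $y\nmid q$ because $q(0,z,\ul t)=\gamma z^d+g(\ul t)\neq 0$; hence $f$ is a primitive degree-one polynomial in $x$ over $K[y,z,\ul t]$ and so irreducible. Without this, $Z(f)$ could have a component on which your section never lands (take $\gamma=g=0$: then $Z(f)=\{y=0\}\cup\{\alpha x+\beta y=0\}$ and $\sigma$ only hits the second component), so your parenthetical remark that ``this argument does not even use $\gamma\neq 0$'' is incorrect. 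With the irreducibility step repaired, the proof is complete.
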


    \begin{definition}
        We say that $f$ as above is \emph{good}.
    \end{definition}

    Proposition \ref{prop:beta-bound} is a direct consequence of the following lemma.

    \begin{lemma}\label{lem:beta-recursion}
        Define a sequence by $n_0 = 2,n_1 = 4,$ and 
        $n_d = \frac{(\phi_d+11)n_{d-1}n_{d-2}}{2} $ for $d\ge 2.$ Then for all $d\ge 0,$ $n_d$ is an even integer and $\beta_d+1 \le n_d.$
    \end{lemma}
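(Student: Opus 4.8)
The statement has two parts: that each $n_d$ is an even integer, and that $\beta_d+1\le n_d$. The first follows from the recursion by a one-line induction --- if $n_{d-1},n_{d-2}$ are even then $n_{d-1}n_{d-2}$ is divisible by $4$, so $\tfrac{n_{d-1}n_{d-2}}{2}$ is even and hence so is $n_d=(\phi_d+11)\tfrac{n_{d-1}n_{d-2}}{2}$, while $n_0=2,n_1=4$ are even. So the real content is $\beta_d+1\le n_d$, which I would prove by strong induction on $d$. The base cases $d=0,1$ are direct: for $d=0$ any coordinate vector is a point where the single diagonal form is nonzero, so $\beta_0=0$; for $d=1$, given a linear form $\ell$ and a diagonal form $g$ of degree $>1$, each with all coefficients nonzero, in $n\ge 4$ variables, $g$ is irreducible (its projective zero locus is smooth) hence not divisible by $\ell$, so $g$ does not vanish identically on the affine space $\{\ell=0\}$ of dimension $\ge 3$, which therefore contains a $K$-point where $g\ne 0$.

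For the inductive step, fix $d\ge 2$ and a non-degenerate diagonal system $f_1,\dots,f_d,f_{d+1}$ of degrees $1,\dots,d$ and $e>d$ in $n\ge n_d$ variables. The guiding observation is that since each $f_k$ is diagonal, vectors $v_1,\dots,v_N$ with pairwise disjoint supports span a subspace $W$ on which every $f_k$ restricts to a diagonal form whose $j$-th coefficient equals $f_k(v_j)$. Thus it suffices to produce such $v_1,\dots,v_N$ with $N\ge n_{d-1}$ (so $N\ge\beta_{d-1}+1$ by induction), with $f_d(v_j)=0$ for all $j$, and with $f_k(v_j)\ne 0$ for all $j$ and all $k\in\{1,\dots,d-1\}\cup\{d+1\}$: then $(f_1|_W,\dots,f_{d-1}|_W,f_{d+1}|_W)$ is a non-degenerate diagonal system in $N>\beta_{d-1}$ variables of degrees $1,\dots,d-1$ and $e>d-1$, so the inductive hypothesis gives $c\in K^N$ with $f_1|_W(c)=\dots=f_{d-1}|_W(c)=0$ and $f_{d+1}|_W(c)\ne 0$, whence $x=\sum_j c_jv_j$ is the sought zero. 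Partitioning $\{1,\dots,n\}$ into $N=n_{d-1}$ blocks each of size $\ge\lfloor n/n_{d-1}\rfloor\ge(\phi_d+11)\tfrac{n_{d-2}}{2}$, everything reduces to finding, inside one block $B$, a single vector $v$ with $f_d(v)=0$ and $f_k(v)\ne 0$ for $k\ne d$.

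This block step is where Lemma \ref{lem:good-subspace} is used: the aim is to restrict $f_d$ to a subspace $W'\subseteq K^B$ on which it becomes a good form $\alpha xy^{d-1}+\beta y^d+\gamma z^d+g(t_1,\dots,t_m)$ with $\alpha\gamma\ne 0$, while keeping each $f_k$ ($k\ne d$) diagonal with all coefficients nonzero on $W'$ and not identically zero on $Z(f_d|_{W'})$; then Lemma \ref{lem:good-subspace} makes the $K$-points of $Z(f_d|_{W'})$ Zariski dense, so some $K$-point avoids the proper closed set $\bigcup_{k\ne d}Z(f_k|_{W'})$ and is the desired $v$. To build $W'$ from $f_d=\sum a_ix_i^d$: take $v_z$ to be one coordinate vector of $B$ (so $\gamma$ is the corresponding $a_i\ne 0$), set aside about $n_{d-2}$ coordinates of $B$ for the ``junk'' block $T$ (they contribute only disjointness, a diagonal form having no cross terms between disjointly-supported blocks), and on the remaining $\approx\phi_d$ coordinates $S$ seek $v_x,v_y$ with $f_d(xv_x+yv_y)=\alpha xy^{d-1}+\beta y^d$; writing $(v_y)_i=1$, $(v_x)_i=\lambda_i$ for $i\in S$ and expanding via Lemma \ref{lem:Taylor-identities}, this is exactly the requirement $\sum_{i\in S}a_i\lambda_i^k=0$ for $k=2,\dots,d$ together with $\sum_{i\in S}a_i\lambda_i\ne 0$, a non-degenerate diagonal system of degrees $2,\dots,d$ with a degree-one side condition, whose solubility over $K$ I would derive from a lower-degree instance of the lemma after disposing of the degree-$d$ equation with $\phi_d$ --- and it is this that contributes the factor $n_{d-2}$. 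Non-vanishing of $f_k|_{W'}$ on $Z(f_d|_{W'})$ is then read off the explicit dense family of good-form solutions, in which $x$ is a rational function of $(y,z,t)$ whose numerator carries $z^d$: for $k<d$ the coefficient of $z^k$ in the restriction of $f_k|_{W'}$ to this family is $f_k(v_z)\ne 0$, and a small variant handles $f_{d+1}$. Tallying coordinates --- $n_{d-1}$ blocks, each using one for $z$, about $n_{d-2}$ for $T$, and the rest for $S$ --- produces the bound $n_d=(\phi_d+11)\tfrac{n_{d-1}n_{d-2}}{2}$, with the constant $11$ absorbing the $O(1)$ overhead.

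The main obstacle is precisely this block step: turning a diagonal form into a good form on a subspace while simultaneously guaranteeing that every other diagonal form stays non-degenerate there and does not vanish on the now-dense zero locus, and then getting all the constants to match the stated recursion --- in particular confirming that the auxiliary system for $v_x,v_y$ costs only a degree $\le d-2$ sub-call rather than something that would destroy the Fibonacci-type growth.
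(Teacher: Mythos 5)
Your top-level decomposition is genuinely different from the paper's: you partition the coordinates into $n_{d-1}$ disjoint blocks, manufacture one vector per block on which $f_d$ vanishes but $f_1,\ldots,f_{d-1},f_{d+1}$ do not, and then run a single $\beta_{d-1}$-induction on the span of those vectors. The paper proceeds in the opposite order --- it first uses $\beta_{d-1}$-type solutions on disjointly supported sub-blocks to produce vectors $u^{(j)},w^{(j)}$ whose span automatically lies in $Z(f_1,\ldots,f_{d-1})$, and only then deals with $f_d$ via one application of Lemma~\ref{lem:good-subspace} on a carefully assembled seven-dimensional subspace $L$. Both orderings aim at the same Fibonacci-type recursion, but the per-block route forces you to handle the ``other forms nonvanishing'' conditions inside \emph{every} block, and that is where your sketch has a real gap.

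The block step as written does not go through. Fixing $(v_y)_i=1$ reduces the goodness condition to solving $\sum_{i\in S}a_i\lambda_i^k=0$ for $k=2,\ldots,d$ subject to $\sum_{i\in S}a_i\lambda_i\ne 0$. This is a system of diagonal forms of degrees $2,\ldots,d$ whose nonvanishing side-condition is \emph{linear}; it does not match the shape required by the definition of $\beta_{d'}$ for any $d'$ (the nonvanishing form there must have strictly larger degree than all the vanishing ones), and ``disposing of the degree-$d$ equation with $\phi_d$'' does not leave you with a $\beta$-instance. Your stated budget of only about $\phi_d$ coordinates for $S$ is also far too small for $d-1$ simultaneous diagonal equations. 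The repair is precisely the paper's two-step maneuver, applied inside the block: leave \emph{both} $v_x$ and $v_y$ free; first choose the coefficients $\mu$ of one of them with $\sum a_i\mu_i^d=0$ and at least $n_{d-2}$ nonzero entries (by chunking $S$ into pieces of size $\phi_d+1$); then the remaining Taylor-coefficient conditions $\sum_i a_i\mu_i^{d-e}\nu_i^e=0$ for $1\le e\le d-2$ together with $\sum_i a_i\mu_i\nu_i^{d-1}\ne 0$ form a genuine $\beta_{d-2}$ system in $\nu$ --- this is what puts the factor $n_{d-2}$ into the recursion. A second soft spot is the non-vanishing of $f_{d+1}|_{W'}$ on $Z(f_d|_{W'})$: your ``coefficient of $z^k$'' argument for $k<d$ works because the substituted $x$ only introduces powers of $z$ that are multiples of $d$, but for $f_{d+1}$ of degree $e$ with $d\mid e$ those contributions land on $z^e$ and could cancel $f_{d+1}(v_z)z^e$. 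The paper avoids this entirely by instead checking via Lemma~\ref{lem:mat-diff-deg} that $f_d,f_{d+1}$ (which have distinct degrees) cut out a complete intersection on $L$, so that $f_{d+1}$ cannot vanish on all of $Z(f_d)\cap L$.
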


    \begin{proof}
        The statement about $n_d$ being an even integer is immediate. We prove the inequality by induction on $d.$ The case $d=0$ is immediate. For $d=1,$ we have $\beta_1\le 2$ since for any $d\ge 2, n>2$ and $a_1,\ldots,a_n \neq 0,$ the form $a_1x_1^d+\ldots+a_nx_n^d$ is irreducible.
        
        Suppose that that $\beta_{d'}+1 \le n_{d'}$ for $d' < d$ and let $\ul{f} = (f_1,\ldots,f_{d+1})$ be a nondegenerate system of diagonal forms with $n \ge  n_d.$ By lemma \ref{lem:good-subspace}, to prove the existence of a point $0\neq x\in K^n$ with $f_{d+1}(x) \neq 0$ and $f_i(x) = 0$ for $i = 1,\ldots,d,$ it's sufficient to construct a  $K$-subspace $L$ such that the following properties hold:
        \begin{enumerate}
            \item The forms $f_1,\ldots,f_{d-1}$ vanish on $L,$
            \item The restriction $f_d\restriction_L$ is good and
            \item We have $\codim_L Z(f_d,f_{d+1})\cap L = 2.$
        \end{enumerate}

        First, let $m = 3n_{d-1}$ and consider the subspace $K^m\cong L_2\subset K^n$ spanned by the final $m$ standard basis vectors. Write $L_2 = L_2^{(1)}\oplus L_2^{(2)}\oplus L_2^{(3)},$ where the $L_2^{(j)}$ are spanned by disjoint collections of $n_{d-1}$ standard basis vectors. By the inductive hypothesis, there exist $w^{(j)}\in L_2^{(j)} $ with $f_1(w^{(j)}) = \ldots = f_{d-1}(w^{(j)}) = 0$ and $f_{d+1}(w^{(j)}) \neq 0$ for $j=1,2,3.$ 

        Next,  let $m' =\frac{(\phi_d+1)n_{d-1}n_{d-2}}{2} + 2n_{d-1}$ and consider the subspace $K^{m'}\cong U\subset K^n$ spanned by the first $m'$ standard basis vectors. Note that $U\cap L_2 = \{0\}$ by the hypothesis on the size of $n.$ We write $U = U^{(1)}\oplus\ldots\oplus U^{(m'/n_{d-1})},$ where the $U^{(j)}$ are spanned by disjoint collections of $n_{d-1}$ standard basis vectors. By the inductive hypothesis, for each $j$ there is a $K$-point $u^{(j)}\in U^{(j)} $ with $f_1(u^{(j)}) = \ldots = f_{d-1}(u^{(j)}) = 0$ and $f_d(u^{(j)}) \neq 0.$ Thus far we have found linearly independent vectors $u^{(j)},w^{(j)}$ such that $\sp(u^{(j)},w^{(j)}) \subset Z(f_1,\ldots,f_{d-1}),$ $f_d(u^{(j)}) \neq 0$ and $f_{d+1}(w^{(j)})\neq 0.$ By construction, $u^{(j)},w^{(j)}$ are both $f_d$ and $f_{d+1}$ orthogonal.

        Now, let $m'' = \frac{(\phi_d+1)n_{d-2}}{2},$ and  $g(x_1,\ldots,x_{m''}) = f_d(x_1u^{(1)}+\ldots+x_{m''}u^{(m'')}).$ By construction, $g$ is a diagonal form with nonzero coefficients. Decompose $K^{m''} = V^{(1)}\oplus\ldots\oplus V^{(n_{d-2}/2)}$ where the $V^{(j)}$ are spanned by disjoint collections of $\phi_d+1$ standard basis vectors. For each $j$ there exists $0\neq x^{(j)}\in V^{(j)}$ with $g(x^{(j)}) = 0.$ Since $g$ has nonzero coefficients, each $x^{(j)}$ must have at least two nonzero entries. The vector $x = x^{(1)}+\ldots+x^{(n_{d-2}/2)}$ satisfies $g(x) = 0$ and has at least $n_{d-2}$ nonzero coefficients. For $1\le e\le d-1,$ consider the corresponding term of the Taylor expansion of $g(x+y),$
        \[
        g^{(d-e,e)}(x,y) =   \sum_{i=1}^{m''} f(u^{(j)}) \binom{d}{e} x_i^{d-e} y_i^e.
        \]
        This is a system of diagonal forms in $y$ with at least $n_{d-2}$ nonzero coefficients, so by the inductive hypothesis we can find $y\in K^{m''}$ with $g^{(d-e,e)}(x,y)  = 0$ for $1\le e< d-1$ and $g^{(1,d-1)} (x,y) \neq 0.$ This last condition implies in particular that $x,y$ are linearly independent. 

        Consider the subspace
        \[
        L = \sp(\sum_{i=1}^{m''}x_iu^{(i)}, \sum_{i=1}^{m''}y_iu^{(i)},u^{(m''+1)}, u^{(m''+2)}, w^{(1)},w^{(2)},w^{(3)}).
        \]
        By construction, the seven vectors spanning $L$ are linearly independent. We claim that $L$
        satisfies properties (1-3). Properties  (1) and (2) are straightforward to verify. For property (3), note that $\brk(f_d\restriction_L, f_{d+1}\restriction_L) \ge 3,$ so  $\codim_L (L\cap S(f_d,f_{d+1})) \ge 2$ by lemma \ref{lem:mat-diff-deg}. It follows that $f_d,f_{d+1}$ define a complete intersection in $L.$ 
    \end{proof}

    \begin{proof}[Proof of proposition \ref{prop:beta-bound}]
        
        We prove this by induction on $d,$ where the base cases $d=0,1$ are immediate. For $d\ge 2,$ applying lemma \ref{lem:beta-recursion} and invoking the inductive hypothesis yields
        \begin{align*}
            n_d &\le (\phi_d+11)2^{1+\varphi^{d-2}+\varphi^{d-3}} \prod_{k=2}^{d-1} (\phi_k+11)^{\varphi^{d-1-k}+\varphi^{d-2-k}} \\
            &= 2^{1+\varphi^{d-1}} \prod_{k=2}^d (\phi_k+11)^{\varphi^{d-k}}.
        \end{align*}
    \end{proof}

\section{Proof of proposition \ref{prop:diag-to-good}}

    The proof will follow a similar strategy to that of proposition \ref{prop:beta-bound}, but will require some additional steps to deal with the fact that our forms are not diagonal. We begin by introducing the auxiliary variety
    \[
    A_m = ((a_1,a_2,a_3,\ul{w},\ul{v}):(\ul{w},\ul{v}) \in D_m\ ,\ f(a_1w_1+a_2w_2+a_3w_3) = 0).
    \]
    
    \begin{lemma}\label{lem:A_m-irr}
        If $ \brk(\ul{f}) >  2\sum_{i=1}^d s_i \multiset{m}{i}+d'-1 $ then $A_m$ is irreducible.
    \end{lemma}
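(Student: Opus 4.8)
The plan is to realize $A_m$ as a hypersurface inside $D_m\times\A^3$ and apply lemma \ref{lem:R1-seq}. First, recall that $D_m=Z(\ul g')$, where $\ul g'\subset T_m(\ul f)$ consists of $T_m(\ul g)$ together with the $f$-orthogonality forms --- those $f^{e}$ with $|e|=d$ whose support meets at least two of the blocks $\{1,2,3\},\{4\},\dots,\{m\}$, the first three slots carrying $w_1,w_2,w_3$. Since the hypothesis here is exactly that of lemma \ref{lem:brk-irr}, $D_m$ is an irreducible complete intersection of codimension $t:=|\ul g'|$. Writing $u=a_1w_1+a_2w_2+a_3w_3$ and expanding
$$F:=f(u)=\sum_{|e|=d,\ \mathrm{supp}(e)\subseteq\{1,2,3\}}a^{e}\,f^{e}(w_1,w_2,w_3),$$
we get $A_m=Z(\ul g'\cup\{F\})$ in $\A^3\times V^m$, with $F$ homogeneous of degree $2d$ in $(a,\ul w)$. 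The forms $f^{e}(w_1,w_2,w_3)$ occurring here have support in a single block, hence lie outside $\ul g'$; adjoining them produces a subcollection $\ul g''\subset T_m(\ul f)$ with $|\ul g''|=t+\multiset{3}{d}$, so by lemma \ref{lem:brk-irr} the locus $B=Z(\ul g'')\subset D_m$ on which $f$ vanishes identically on $\sp(w_1,w_2,w_3)$ is a complete intersection of codimension $\multiset{3}{d}\ge 3$ in $D_m$. In particular $B\ne D_m$, so $F$ does not vanish on $D_m\times\A^3$, and therefore $A_m$ is a complete intersection of codimension $t+1$ --- hence pure of dimension $\dim D_m+2$, Cohen--Macaulay, and (being a cone) connected.

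By lemma \ref{lem:R1-seq} it remains to show that the locus $\Sigma\subset A_m$ on which $\nabla g'_1,\dots,\nabla g'_t,\nabla F$ are linearly dependent has codimension at least $2$ in $A_m$, i.e.\ $\dim\Sigma\le\dim D_m$. Split $\Sigma=\Sigma_1\cup\Sigma_2'$, where $\Sigma_1$ is the sublocus on which $\nabla g'_1,\dots,\nabla g'_t$ are already dependent and $\Sigma_2'=\Sigma\setminus\Sigma_1$. For $\Sigma_1$: lemmas \ref{lem:brk-taylor} and \ref{lem:brk-sing} show that $\codim_{V^m}S(\ul g')$ exceeds $t$ by more than $m$ (using that $D_m$ is cut out by $\multiset{3}{d}+(m-3)\ge m$ fewer Taylor forms than all of $T_m(\ul f)$), whence $\dim\Sigma_1\le\dim\bigl(S(\ul g')\times\A^3\bigr)<\dim D_m$. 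On $\Sigma_2'$ the $\nabla g'_i$ are independent, so $\nabla F$ lies in their span; since the $g'_i$ involve no $a$-variable, the $a$-partials of $F$ must vanish, i.e.\ $\nabla f(u)\cdot w_j=0$ for $j=1,2,3$. Thus $\Sigma_2'\subseteq\Sigma_2:=\{(a,\ul w,\ul v):(\ul w,\ul v)\in D_m,\ \nabla f(u)\cdot w_j=0\ \text{for}\ j=1,2,3\}$, and it suffices to show $\dim\Sigma_2\le\dim D_m$.

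This last bound is the crux. The key point is that $D_m$ is invariant under $w_1\mapsto w_1+a_2w_2+a_3w_3$ (and under the permuted versions), because both the conditions defining $Z_m(\ul g)$ and the $f$-orthogonality of $\sp(w_1,w_2,w_3),v_1,\dots,v_{m-3}$ depend only on the subspace $\sp(w_1,w_2,w_3)$ and on $v_1,\dots,v_{m-3}$. On the chart $a_1\ne 0$, rescaling to $a_1=1$ and substituting $w_1':=u$, we may carry $D_m$ to itself; by Euler's identity and lemma \ref{lem:Taylor-identities}(2) the conditions $\nabla f(u)\cdot w_j=0$ then become the vanishing of the Taylor forms $f^{(d,0,\dots)},\ f^{(d-1,1,0,\dots)},\ f^{(d-1,0,1,0,\dots)}$ of $f$. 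These have support inside the single block $\{1,2,3\}$, hence are disjoint from $\ul g'$; adjoining them to $\ul g'$ stays inside $T_m(\ul f)$, so by lemma \ref{lem:brk-irr} their common zero set meets $D_m$ in codimension $3$. Accounting for the three scalars $a_1,a_2,a_3$ (which contribute $+3$ to the dimension while the three orthogonality conditions cost $-3$ inside $D_m$) gives $\dim\Sigma_2\le\dim D_m$; note $\{a=0\}$ lies entirely in $\Sigma$ (there $\nabla F\equiv 0$, as $\deg f\ge 2$) but has only dimension $\dim D_m$. Hence $\dim\Sigma=\dim D_m=\dim A_m-2$, and lemma \ref{lem:R1-seq} yields the irreducibility of $A_m$.

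The step I expect to be the main obstacle is exactly this dimension estimate for $\Sigma_2$: one must verify carefully that $D_m$ is preserved by $w_1\mapsto w_1+a_2w_2+a_3w_3$, recognize the transformed orthogonality conditions as genuinely new Taylor forms of $f$ disjoint from $\ul g'$ (so lemma \ref{lem:brk-irr} applies to the enlargement), and keep precise track of the three scalar parameters --- in particular of the dimension-critical but harmless locus $a=0$. The complete-intersection claim and the estimate for $\Sigma_1$ are routine consequences of the results of section 4.
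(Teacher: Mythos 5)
Your proof is correct, but it takes a genuinely different route from the paper's for the key step of bounding the singular locus. You set up the complete-intersection claim in essentially the same way (show $F$ does not vanish identically on $\A^3\times D_m$ by cutting $D_m$ further with the single-block Taylor forms $f^e$, $\mathrm{supp}(e)\subseteq\{1,2,3\}$), and both arguments finish via lemma \ref{lem:R1-seq}. The divergence is in how the singular locus is estimated.

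The paper decomposes the singular locus $S$ fiberwise over $a\in\A^3$. For $a\neq 0$, it observes that the $(\ul w,\ul v)$-gradient of $f'$ equals $\nabla f_a$, which is a linear combination of gradients of elements of $T_m(f)$; since the remaining defining equations of $D_m$ also lie in $T_m(\ul f)$, any dependence at $(a,\ul w,\ul v)$ projects to a point of $S(T_m(\ul f))$, whose dimension is bounded directly by lemmas \ref{lem:brk-taylor} and \ref{lem:brk-sing}. Taking the union over $a$ costs $3$ dimensions, and the $a=0$ fiber is handled separately using $\nabla f'|_{a=0}=0$. This avoids any case split on whether the gradients of the $D_m$-equations are already dependent, and in particular requires no substitution trick.

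Your argument instead splits $\Sigma=\Sigma_1\cup\Sigma_2'$ according to whether $\nabla g_1',\dots,\nabla g_t'$ are dependent, handles $\Sigma_1$ via $S(\ul g')$, and then on the complement deduces $\nabla_a F=0$, i.e.\ $\nabla f(u)\cdot w_j=0$ for $j=1,2,3$. You then exploit the basis-change invariance of $D_m$ on the chart $a_1\neq 0$ to turn these three conditions into the single-block Taylor forms $f^{(d,0,\ldots)},f^{(d-1,1,0,\ldots)},f^{(d-1,0,1,0,\ldots)}$, so that lemma \ref{lem:brk-irr} pins the codimension inside $D_m$ at exactly $3$, cancelling the $3$ extra scalar dimensions from $a$; the locus $\{a=0\}$ is dimension-critical but harmless. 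This is a valid and more refined analysis (it even identifies $\codim_{A_m}\Sigma$ exactly rather than merely bounding it), at the cost of the extra bookkeeping you flagged as the main obstacle. The paper's route is shorter and more robust: it never needs the invariance of $D_m$ or the change of coordinates, only the single inclusion $S_a\subset\{a\}\times S(T_m(\ul f))$. Both are correct.

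Two small points worth being explicit about: (i) when you invoke lemma \ref{lem:brk-sing} for $S(\ul g')$, the relevant $d'$ for $\ul g'$ agrees with the $d'$ for $\ul f$ because the orthogonality conditions have degree $d$ and $s_d>0$; (ii) your $\Sigma_2$ as defined contains $\Sigma_2'$ but is typically strictly larger (e.g.\ all of $\{a=0\}\times D_m$), which is fine since you only use it as an upper envelope for the dimension count.
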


    \begin{proof}
         By lemma \ref{lem:brk-irr}, the varieties $Z_m(\ul{g})$ and $Z(\ul{g},f)$ are irreducible complete intersections. It follows that $f'(\ul{a},\ul{w}) = f(a_1w_1+a_2w_2+a_3w_3)$ doesn't vanish identically on $Z = \A^3\times D_m,$ so $A_m$ is a complete intersection. Let $S = S(f',T_m(\ul{g})).$ To prove the claim, it's sufficient  by lemma \ref{lem:R1-seq} to show that
         \begin{equation}\label{eq:A_m-sing}
             \dim Z\cap  S  < \dim A_m-1.
         \end{equation}
    
    To see this, decompose $ S = \bigcup_{a\in \A^3} S_a$ where $S_a = S\cap (\{a\}\times V^m).$  For $a = 0,$ we have
    \[
    \codim Z\cap  S_0 \ge \codim Z\cap (\{0\}\times V^m) = \codim Z+3   = \codim A_m+2.
    \]
    For any $a\neq 0,$ let  
    \[
    f_a(\ul{w},\ul{v}) = f(a_1w_1+a_2w_2+a_3w_3) = \sum_{e_1+e_2+e_3 = d} a_1^{e_1} a_2^{e_2} a_3^{e_3} f^{(e_1,e_2,e_3)}(\ul{w}).
    \]
    If $(a,\ul{w},\ul{v})\in S$ then $(\ul{w},\ul{v})\in S(f_a, T_m(\ul{g}))\subset S(T_m(\ul{f})).$ By lemma \ref{lem:brk-sing} and \ref{lem:brk-taylor} we get
    \begin{align*}
         \dim S_a &\le \dim(S(T_m(\ul{f}))) \le  m\dim V - \brk(\ul{f}) + \sum_{i=1}^d s_i\multiset{m}{i} + d' -2  \\
         &< m\dim V -  \sum_{i=1}^d s_i \multiset{m}{i} -1.
    \end{align*}
    Taking the union over $0\neq a\in A^3,$ and using the bound for $a=0,$ we get
    \[
    \dim Z\cap S <  m\dim V -  \sum_{i=1}^d s_i \multiset{m}{i} +2 \le \dim A_m-1,
    \]
    proving inequality \eqref{eq:A_m-sing}.
    \end{proof}

    The next lemma allows us to diagonalize while simultaneously keeping track of a Zariski open condition.

    \begin{lemma}
        Suppose that $\brk(\ul{f}) > 2\sum_{i=1}^d s_i \multiset{m}{i}+d'-1$ and that $h$ is a polynomial which doesn't vanish identically on $Z(\ul{f}).$ Then there exists a dense open subset of $(\ul{w},\ul{v})\in D_m$ such that 
        \begin{itemize}
        \item The vectors $w_1,w_2,w_3,v_1,\ldots,v_{m-3}$ are linearly independent,
        \item the polynomial $h$ doesn't vanish identically on $\sp(w_1,w_2,w_3)\cap (f=0)$ and 
        \item for each $i\in [m-3],$ $f(v_i) \neq 0.$ 
        \end{itemize}
    \end{lemma}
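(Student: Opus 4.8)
The plan is to argue geometrically over $\overline{K}$, exploiting that both $D_m$ and the auxiliary variety $A_m$ are irreducible. Write $N=\sum_{i=1}^{d}s_i\multiset{m}{i}$, so the hypothesis reads $\brk(\ul{f})>2N+d'-1$; since Birch rank never exceeds the number of variables, this forces $n:=\dim V>2N+d'-1$, and hence $n-N>m-1$ (a crude estimate, since $N\ge\multiset{m}{i_0}\ge m$ for the least $i_0$ with $s_{i_0}\neq 0$). By lemma \ref{lem:brk-irr}, applied both with $m=1$ and to suitable subsets of $T_m(\ul{f})$, the sets $Z(\ul{g})$, $Z(\ul{f})$ and $D_m$ are irreducible complete intersections, of codimensions $|\ul{g}|$, $|\ul{g}|+1$ and at most $N$ respectively; and $A_m$ is irreducible by lemma \ref{lem:A_m-irr}. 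Because $D_m$ is irreducible, it suffices to show each of the three conclusions holds on a dense open subset of $D_m$; their intersection is then dense open, and the lemma follows.

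Two of the three conditions are open, so I only need one witnessing point of $D_m$ for each. For $f(v_i)\neq 0$: by the symmetry of $D_m$ under permuting $v_1,\ldots,v_{m-3}$ it is enough to find a point with $f(v_1)\neq 0$; since $Z(\ul{f})=Z(\ul{g})\cap(f=0)$ has codimension $|\ul{g}|+1>|\ul{g}|=\codim Z(\ul{g})$, it is a proper subset of $Z(\ul{g})$, so there is $v\in Z(\ul{g})$ with $f(v)\neq 0$, and the point $(w_1,w_2,w_3,v_1,\ldots,v_{m-3})=(0,0,0,v,0,\ldots,0)$ lies in $D_m$ (all $f$-orthogonality conditions being vacuous, as only one block is nonzero), with $f(v_1)=f(v)\neq 0$. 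For linear independence: I build a point of $D_m$ whose $m$ blocks are independent, one block at a time. Having chosen linearly independent $v^{(1)},\ldots,v^{(k)}$ (with $k<m$) satisfying every defining equation of $D_m$ that involves only the first $k$ blocks, the admissible choices for the $(k{+}1)$-st block form a subvariety $E_{k+1}\subseteq V$ cut out by at most $N$ forms (Taylor coefficients of $\ul{f}$) and containing $0$; thus $\dim E_{k+1}\ge n-N>m-1\ge k$, so $E_{k+1}\not\subseteq\sp(v^{(1)},\ldots,v^{(k)})$ and we may extend, starting from any nonzero $v^{(1)}\in Z(\ul{g})$.

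The main point is the condition on $h$, which I handle by a fibre-dimension argument with $A_m$ in the role played by the Taylor fibre in lemma \ref{lem:diag-on-subspace-geo}(1). Let $\pi\colon A_m\to D_m$ forget $(a_1,a_2,a_3)$; it is surjective, since $(0,0,0,\ul{w},\ul{v})\in A_m$ for every $(\ul{w},\ul{v})\in D_m$. Since $(\ul{w},\ul{v})\in D_m$ forces $\ul{g}$ to vanish on $\sp(w_1,w_2,w_3)$, every point of $A_m$ has $a_1w_1+a_2w_2+a_3w_3\in Z(\ul{g})\cap(f=0)=Z(\ul{f})$; this gives a morphism $\Phi\colon A_m\to Z(\ul{f})$, $(a,\ul{w},\ul{v})\mapsto a_1w_1+a_2w_2+a_3w_3$, which is surjective because $v\in Z(\ul{f})$ is the image of $((1,0,0),v,0,0,0,\ldots,0)\in A_m$. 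Hence $\tilde{h}:=h\circ\Phi$ is a regular function on the irreducible variety $A_m$ that is not identically zero (as $h$ is not identically zero on $Z(\ul{f})$ and $\Phi$ is onto), so $X:=(\tilde{h}=0)\cap A_m$ satisfies $\dim X<\dim A_m$. Exactly as in lemma \ref{lem:diag-on-subspace-geo}(1), for $(\ul{w},\ul{v})$ in a dense open subset of $D_m$ one has $\dim\bigl(X\cap\pi^{-1}(\ul{w},\ul{v})\bigr)<\dim\pi^{-1}(\ul{w},\ul{v})$; for such $(\ul{w},\ul{v})$ the function $\tilde{h}$ does not vanish on the whole fibre $\pi^{-1}(\ul{w},\ul{v})=\{(a_1,a_2,a_3):f(a_1w_1+a_2w_2+a_3w_3)=0\}$, which is precisely the assertion that $h$ does not vanish identically on $\sp(w_1,w_2,w_3)\cap(f=0)$. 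Intersecting the three dense open sets completes the argument.

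I expect the step requiring the most care to be this last one: the content is in recasting ``$h$ is not identically zero on a section of the degree-$d$ hypersurface inside $\sp(w_1,w_2,w_3)$'' as a statement about a single regular function on an irreducible variety, which is exactly what the passage to $A_m$ and its irreducibility (lemma \ref{lem:A_m-irr}) accomplish; once that is in place it is the same generic-fibre bookkeeping as in lemma \ref{lem:diag-on-subspace-geo}. A secondary point needing attention is checking that the block-by-block construction of the linearly independent configuration genuinely remains inside $D_m$ at every stage, which reduces to the inequality $n-N>m-1$ recorded at the outset.
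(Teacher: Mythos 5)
Your proof is correct and shares the paper's skeleton: reduce to showing each of the three properties holds on a nonempty open subset of the irreducible variety $D_m$, and handle the $h$-condition via the auxiliary variety $A_m$ (lemma \ref{lem:A_m-irr}) and a generic-fibre dimension comparison, exactly as the paper does --- you are a bit more explicit than the paper in verifying that $\Phi$ is surjective onto $Z(\ul{f})$ and hence that $h\circ\Phi$ is nonzero on $A_m$, which is a useful clarification. Where you genuinely diverge is in the other two conditions. The paper treats both by codimension bounds: for linear independence it compares $\codim_{V^m}$ of the locus of linearly dependent $m$-tuples, namely $\dim V - (m-1)$, against $\codim_{V^m} D_m \le N$; for $f(v_i)\ne 0$ it invokes lemma \ref{lem:brk-irr} once more to see that $D_m \cap (f(v_i)=0)$ is a complete intersection of strictly larger codimension than $D_m$. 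You instead produce explicit witnessing points: for $f(v_1)\neq 0$ the point $(0,0,0,v,0,\ldots,0)$ with $v\in Z(\ul{g})\setminus Z(\ul{f})$, and for independence a block-by-block extension using $n-N>m-1$ (which does follow from $\brk(\ul{f})\le n$ and $N\ge m$). Both routes are sound; the paper's codimension arguments are shorter given that lemma \ref{lem:brk-irr} is already available, while your constructive version is more self-contained and makes the role of the crude bound $\brk(\ul{f})\le\dim V$ transparent.
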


    \begin{proof}
         By lemma \ref{lem:brk-irr} the variety $D_m$ is irreducible so it's enough to show that each condition separately holds for a nonempty open subset.
         
         The first condition holds because the codimension of the variety of linearly dependent tuples in $V^m$ is
        \[
        \dim V - (m-1) > \brk(\ul{f}) - m > \sum_{i=1}^d s_i \multiset{m}{i} > \codim_{V^m} D_m. 
        \]
      
      For the second condition, note that our hypothesis implies that 
      $$A' = A_m \cap (h(a_1w_1+a_2w_2+a_3w_3) = 0)$$ is a proper subset of $A_m.$ By lemma \ref{lem:A_m-irr} this implies $\dim A' < \dim A_m,$  and therefore the fiber satisfies  $\dim A'(\ul{w},\ul{v}) < \dim A_m(\ul{w},\ul{v})$ for a dense open subset of $D_m.$ 
      
    For the third condition, it is again enough to prove that it holds on a dense open subset for each fixed $i.$ Indeed, by lemma \ref{lem:brk-irr}, for each $i$ the set $D_m\cap (f(v_i) =0) \subset D_m$ has lower dimension.
    \end{proof}

    \begin{proof}[Proof of proposition \ref{prop:diag-to-good}]

    Suppose $\brk(\ul{f}) > \max(\delta_m(s_d,\ldots,s_1), 2\sum_{i=1}^d\multiset{m}{i}+d'-1) $ and let $h$ be a polynomial which doesn't vanish identically on $Z(\ul{f}).$ We wish to show that $Z(\ul{f})\cap (h\neq 0)$ contains a $K$-point. By lemma \ref{lem:good-subspace}, it's enough to find a $K$-subspace $L\subset K^n$ such that: 
    \begin{enumerate}
        \item The forms in $\ul{g}$ vanish on $L,$
        \item The restriction $f\restriction_L$ is good and
        \item the polynomial $h$ doesn't vanish identically on $Z(f)\cap L.$
    \end{enumerate}
    
    Let $U\subset D_m$ be the dense open subset of the previous lemma and let  $(\ul{w},\ul{v})\in U$ be a $K$-point. This exists because $\brk(\ul{f}) > \delta_m.$ Let 
    $$ f_0(x_1,\ldots,x_{m-4}) = f(x_1v_1+\ldots+x_{m-4}v_{m-4})$$
    be the nondegenerate diagonal form given by the restriction of $f$ and decompose $K^{m-4} = V^{(1)}\oplus\ldots\oplus V^{(\lceil\frac{\beta_{d-1}+1}{2}\rceil)},$ where the $V^{(j)}$ are spanned by disjoint collections of $\phi_d+1$ standard basis vectors. By definition, for each $j$ there exists $0\neq x^{(j)}\in V^{(j)}$ with $f_0(x^{(j)}) = 0.$ Each $x^{(j)}$ must have at least two nonzero entries, so the vector $x = x^{(1)}+\ldots+x^{\left( \lceil\frac{\beta_{d-1}+1)}{2}\rceil \right)}$ satisfies $f_0(x) = 0$ and has at least $\beta_{d-1}+1$ nonzero entries.

    For $1\le e \le d-1,$ consider the terms of the Taylor expansion of $f_0(x+y),$
        \[
        f_0^{(d-e,e)}(x,y) =   \sum_{i=1}^{m-4} f(v_j) \binom{d}{e}  x_i^{d-e} y_i^e.
        \]
    This is a system of diagonal forms in $y$ with at least $\beta_{d-1}+1$ nonzero coefficients, so we can find $y\in K^{m-4}$ with $f_0^{(d-e,e)}(x,y)  = 0$ for $1\le e< d-1$ and $f_0^{(1,d-1)} (x,y) \neq 0.$ This last condition implies in particular that $x,y$ are linearly independent. By construction, the subspace
    \[
    L = \sp\left(\sum_{i=1}^{m-4} x_iv_i, \sum_{i=1}^{m-4} y_iv_i ,  v_{m-3}, w_1,w_2,w_3 \right) 
    \]
    satisfies conditions (1-3) above. This completes the proof.
\end{proof}

\bibliographystyle{plain}
\bibliography{refs}

\end{document}